\newtheorem{theorem}{Theorem}[section]
\newtheorem{lemma}[theorem]{Lemma}
\newtheorem{corollary}[theorem]{Corollary}
\newtheorem{example}[theorem]{Example}
\newtheorem{proposition}[theorem]{Proposition}
\def\endproof{\qed\endtrivlist}
\let\csname endproof*\endcsname=\endproof
\def\qedsymbol{\ifmmode\bgroup\else$\bgroup\aftergroup$\fi
  \vcenter{\hrule\hbox{\vrule height.6em\kern.6em\vrule}\hrule}\egroup}
\def\qed{\ifmmode\else\unskip\nobreak\fi\quad\qedsymbol}
\renewcommand{\iff}{\Leftrightarrow}
\renewcommand{\le}{\leqslant}
\renewcommand{\ge}{\geqslant}
\newcommand{\ind}{\mbox{\rm ind}}
\newcommand{\im}{\qopname\relax{no}{Im}}
\begin{document}

\journal{\qquad}

\title{\Large\bf Weakly linear systems of fuzzy relation inequalities:\\ The heterogeneous case\tnoteref{t1}}
%%%
\tnotetext[t1]{Research supported by Ministry  of Education and Science, Republic of Serbia, Grant No. 174013}
%%%
%%%%%%%%%%%%%%%%%%%%%%%%%%%%%%%%%%%%%%%%%%%%%%%%%%
\author[fsmun]{Jelena Ignjatovi\'c}
\ead{jelena.ignjatovic@pmf.edu.rs}

\author[fsmun]{Miroslav \'Ciri\'c\corref{cor}}
\ead{miroslav.ciric@pmf.edu.rs}

\author[tfc]{Nada Damljanovi\'c}
\ead{nada@tfc.kg.ac.rs}

\author[fsmun]{Ivana Jan\v ci\'c}
\ead{ivanajancic84@gmail.com}

\cortext[cor]{Corresponding author. Tel.: +38118224492; fax: +38118533014.}
\address[fsmun]{University of Ni\v s, Faculty of Sciences and Mathematics, Vi\v segradska 33, 18000 Ni\v s, Serbia}
\address[tfc]{University of Kragujevac, Technical faculty in \v Ca\v cak, Svetog Save 65, P. O. Box 131, 32000 \v Ca\v cak, Serbia}
%%%%%%%%%%%%%%%%%%%%%%%%%%%%%%%%%%%%%%%%%%%%%%%%%%

\begin{abstract}\small
New types of systems of fuzzy relation inequalities and equations, called weakly linear,
have been recently introduced in [J. Ignjatovi\'c, M. \'Ciri\'c, S. Bogdanovi\'c, On the greatest solutions to weakly linear systems of fuzzy relation inequalities and equations, Fuzzy Sets and Systems 161 (2010)
3081--3113.].~The mentioned paper dealt with homogeneous weakly linear systems, composed of fuzzy relations on a single set, and a method for computing their greatest solutions has been provided.~This method is based on the
computing of the greatest post-fixed point, contained in a given fuzzy~relation, of an
isotone function on the lattice of fuzzy relations.~Here we adapt this method for
computing the greatest solutions of heterogeneous weakly linear systems, where the unknown
fuzzy relation relates two possibly different sets.~We also introduce and study quotient fuzzy relational systems and establish relationships between solutions to heterogeneous and homogeneous weakly linear systems.~Besides, we point out to applications of the obtained results in the state reduction of fuzzy automata and computing the greatest simulations and
bisimulations between fuzzy automata, as well as in the positional analysis of fuzzy social networks.
\end{abstract}

\begin{keyword}\small
Fuzzy relations; fuzzy relational systems; fuzzy relation inequalities; fuzzy relation equations; matrix inequalities; residuals of fuzzy relations; fuzzy equivalence relations; complete residuated lattices;  post-fixed points
\end{keyword}

\maketitle

\section{Introduction}

Systems of fuzzy relation equations and inequalities were first studied by Sanchez, who used them~in medical research (cf.~\cite{San.74,San.76,San.77,San.78}).~Later they found a much wider field of application, and nowadays they~are used in fuzzy control, discrete dynamic systems, knowledge engineering, identification of fuzzy systems, predic\-tion of fuzzy systems, decision-making, fuzzy information retrieval, fuzzy pattern recognition, image compres\-sion and reconstruction, and in many other areas (cf., e.g., \cite{DeB.00,DiNSPS.89,DP.80,DP.00,KY.95,PG.07,PK.04}).

Most frequently studied systems were the ones that consist of equations and inequalities with one~side containing the composition of an unknown fuzzy relation and a given fuzzy relation or  fuzzy set,~while the other side contains only another given fuzzy relation or fuzzy set.~Such systems are called {\it linear~systems\/}. Solvability and methods for computing the greatest solutions to linear systems of fuzzy relation equations and inequalities were first studied in the above mentioned papers by Sanchez, who discussed linear~sys\-tems over the G\"odel structure.~Later, linear systems over more general structures of truth values were investigated, including those over complete residuated lattices (cf., e.g., \cite{CIB.09,DeB.00,Klawonn.00,PK.07,Perf.04,PG.03,PN.07}).

More complex non-linear systems of fuzzy relation inequalities and equations, called {\it weakly~linear\/}, have been recently introduced and studied in \cite{ICB.10}.~Basically, weakly linear systems discussed in \cite{ICB.10} consist of inequalities and equations~of the form $V_i\circ U\bowtie U\circ V_i$ and $U\le W$, where $V_i$ ($i\in I$) and $W$ are given fuzzy relations on a set $A$, $U$~is~an~unknown fuzzy relation on $A$, $\circ $ denotes the composition of fuzzy relations, and $\bowtie $ is one of $\le $, $\ge $ and $=$.~Besides, these systems can also include additional inequalities and equations of the form $V_i\circ U^{-1}\bowtie U^{-1}\circ V_i$ and $U^{-1}\le W$, where $U^{-1}$ denotes the converse (inverse, transpose) relation.~Such weakly linear systems, which include only fuzzy relations on a single set, are called {\it homogeneous\/}.

It has been proved in \cite{ICB.10} that each homogeneous weakly linear system, with a complete residuated lattice as the~underlying structure of truth values, has the greatest solution, and an algorithm has been provided for computing this greatest solution.~This algorithm is based on the computing of the greatest post-fixed point, contained in a given fuzzy relation, of an isotone function on the lattice of fuzzy relations, and works whenever the underlying complete residuated lattice is locally finite, for example, when dealing with Boolean~and G\"odel structure.~Otherwise, some sufficient conditions under which the algorithm works have~been deter\-mined.~The mentioned algorithm is iterative, and each its single step can be viewed as solving a particular linear system, and for this reason these systems were called weakly linear.

In this paper we introduce and study heterogeneous weakly linear systems of fuzzy relation inequalities and equations, which are composed of fuzzy relations on two possible different sets, and an unknown is a fuzzy relation between these two sets.~Namely, if $V_i$ and $W_i$ ($i\in I$) are respectively given fuzzy relations on non-empty sets $A$ and $B$, $Z$ is a given fuzzy relation between $A$ and $B$, and $U$ is an unknown fuzzy relation between $A$ and $B$, by a heterogeneous weakly linear system we mean two systems consisting of inequalities of the form $U^{-1}\circ V_i\le W_i\circ U^{-1}$, $U\le Z$, and $V_i\circ U\le U\circ W_i$, $U\le Z$, as well as four systems obtained by combinations of these two systems (for $U$ and $U^{-1}$).

Our main results are the following.~We prove that every heterogeneous weakly linear system has the greatest solution (cf.~Theorem \ref{th:great}). Then we define isotone and image-localized functions $\phi^{(i)}$ ($i=1,\ldots ,6$) on the lattice of fuzzy relations between $A$ and $B$ such that each of the six heterogeneous weakly linear systems can be represented in an equivalent form $U\le \phi^{(i)}(U)$, $U\le Z$ (cf.~Theorem \ref{th:eq.form}).~Such representation enables us to reduce the problem of computing the greatest solution to a heterogeneous weakly linear system to the problem of computing the greatest post-fixed point, contained in the fuzzy relation $Z$, of the function $\phi^{(i)}$. For this purpose we use the iterative method developed in \cite{ICB.10}, adapted to the heterogeneous case.~We also show how the procedure can be modified to compute the greatest crisp solution of the system.~After that, we introduce the concept of the quotient fuzzy relational system with respect to a fuzzy equivalence, and we proved several theorems analogous to the well-known homomorphism, isomorphisms and correspondence theorems from universal algebra (cf.~Theorems \ref{th:E.nat}--\ref{th:F.E}).~Using this concept we establish natural relationships between solutions to heterogeneous and homogeneous weakly linear systems (cf.~Theorems \ref{th:unif}--\ref{th:unif.25}).

The structure of the paper is as follows.~In Section \ref{sec:prel} we give definitions of basic notions and notation concerning fuzzy sets and relations, and in Section \ref{sectionUniform} we recall some notions, notation and results from \cite{CIB.09}, concerning uniform fuzzy relations
and related concepts.~After that, in Section \ref{sec:wlsyst} we recall the definitions of homogeneous weakly linear systems, and then define the heterogeneous ones and prove their fundamental properties.~Section \ref{sec:alg} presents an adaptation of the method developed in \cite{ICB.10} for computing the greatest solution to a heterogeneous weakly linear system.~In Section \ref{sec:quot} we discuss quotient fuzzy relational systems and their main properties, and in Section \ref{sec:relat} we study relationships between solutions to heterogeneous and homogeneous weakly linear systems.~Finally, Section \ref{sec:appl} shows some applications of weakly linear systems in the theory of fuzzy automata and social network analysis.

It is worth noting that weakly linear systems originate from the theory of fuzzy automata.~Solutions to homogeneous weakly linear systems have been used in \cite{CSIP.07,CSIP.10,SCI.11}
for reduction of the number of states, and solutions to the heterogeneous systems have been used in \cite{CIDB.11,CIJD.11} in the study of simulations and bisimulations between fuzzy automata.

\section{Preliminaries}\label{sec:prel}

The terminology and basic notions in this section are according to \cite{Bel.02,BV.05,DP.80,DP.00,KY.95}.

We will use complete residuated lattices as the structures of membership (truth) values.~Residuated lattices are a
very general algebraic structure and generalize many  algebras with very important applications (see for example \cite{Bel.02,BV.05,Hajek.98,Hohle.95}).~A {\it residuated lattice\/} is an algebra ${\cal L}=(L,\land
,\lor , \otimes ,\to , 0, 1)$ such that
\begin{itemize}
\parskip=-2pt%\itemindent=3mm
\item[{\rm (L1)}] $(L,\land ,\lor , 0, 1)$ is a lattice with the least element $0$ and the
greatest element~$1$,
\item[{\rm (L2)}] $(L,\otimes ,1)$ is a commutative monoid with the unit $1$,
\item[{\rm (L3)}] $\otimes $ and $\to $ form an {\it adjoint pair\/}, i.e., they satisfy the
{\it adjunction property\/}: for all $x,y,z\in L$,
\begin{equation}\label{eq:adj}
x\otimes y \leqslant z \ \Leftrightarrow \ x\leqslant y\to z .
\end{equation}
\end{itemize}
If, in addition, $(L,\land ,\lor , 0, 1)$ is a complete lattice, then ${\cal L}$ is called a {\it
complete residuated lattice\/}.~Emphasizing their monoidal structure, in some sources residuated lattices are called
integral, commutative, residuated $\ell $-monoids \cite{Hohle.95}.

The operations $\otimes $ (called {\it multiplication\/}) and $\to
$ (called {\it residuum\/}) are intended for modeling the conjunction and implication of the
corresponding logical calculus, and supremum ($\bigvee $) and infimum ($\bigwedge $) are intended
for modeling of the existential and general quantifier, respectively. An operation $\leftrightarrow $ defined
by
\begin{equation}\label{eq:bires}
x\leftrightarrow y = (x\to y) \land (y\to x),
\end{equation}
called {\it biresiduum\/} (or {\it biimplication\/}), is used for modeling the equivalence of truth
values.

The most studied and applied structures of truth values, defined
on the real unit interval $[0,1]$ with $x\land y =\min
(x,y)$ and $x\lor y =\max (x,y)$, are the {\it {\L}ukasiewicz
structure\/} (where $x\otimes y = \max(x+y-1,0)$, $x\to y=
\min(1-x+y,1)$), the {\it Goguen} ({\it product\/}) {\it
structure\/} ($x\otimes y = x\cdot y$, $x\to y= 1$ if $x\le y$,
and~$=y/x$ otherwise), and the {\it G\"odel structure\/} ($x\otimes
y = \min(x,y)$, $x\to y= 1$ if $x\le y$, and $=y$
otherwise).~More~generally, an algebra $([0,1],\land ,\lor ,
\otimes,\to , 0, 1)$ is a complete~resi\-duated lattice if and
only if $\otimes $ is a left-continuous t-norm and the residuum is
defined by $x\to y = \bigvee \{u\in [0,1]\,|\, u\otimes x\le y\}$ (cf.~\cite{BV.05}).~Another
important set of truth values is the set
$\{a_0,a_1,\ldots,a_n\}$, $0=a_0<\dots <a_n=1$, with $a_k\otimes
a_l=a_{\max(k+l-n,0)}$ and $a_k\to a_l=a_{\min(n-k+l,n)}$. A
special case of the latter algebras is the two-element Boolean
algebra of classical logic with the support $\{0,1\}$.~The only
adjoint pair on the two-element Boolean algebra consists of the
classical conjunction and implication operations.~This structure
of truth values we call the {\it Boolean structure\/}.~A
residuated~lattice $\cal L$ satisfying $x\otimes y=x\land y$ is
called a {\it Heyting algebra\/}, whereas a Heyting algebra
satisfying the prelinearity axiom $(x\to y)\lor (y\to x)=1$ is
called a {\it G\"odel algebra\/}. If any finitelly
generated~sub\-algebra of a residuated lattice $\cal L$ is finite,
then $\cal L$ is called {\it locally finite\/}.~For example, every
G\"odel algebra, and hence, the G\"odel structure, is locally
finite, whereas the product structure is not locally finite.

If $\cal L$ is a complete residuated lattice, then for all $x,y,z\in L$ and any $\{y_i\}_{i\in
I}\subseteq L$ the following holds:
\begin{align}
& x\le y\ \ \text{implies}\ \ x\otimes z\le y\otimes z, \label{eq:is.1}\\
& x\le y\ \ \Leftrightarrow\ \ x\to y=1 ,\label{eq:x.le.y} \\
& x\otimes \bigvee_{i\in I}y_i = \bigvee_{i\in I}(x\otimes y_i),\label{eq:prod.sup} \\
& x\otimes \bigwedge_{i\in I}y_i \le  \bigwedge_{i\in I}(x\otimes y_i), \label{eq:prod.inf} \end{align}
For other properties of complete residuated lattices we refer to \cite{Bel.02,BV.05}.

In the further text $\cal L$ will be a complete residuated lattice.~A {\it
fuzzy subset\/} of a set $A$ {\it over\/} ${\cal L}$, or~simply a {\it fuzzy
subset\/} of $A$, is any function from $A$ into $L$.~Ordinary crisp subsets~of~$A$ are considered as fuzzy subsets of $A$ taking membership values in the set
$\{0,1\}\subseteq L$.~Let $f$ and $g$ be two
fuzzy subsets of $A$.~The {\it equality\/} of $f$ and $g$ is defined as the
usual equality of functions, i.e., $f=g$ if and only if $f(x)=g(x)$, for every
$x\in A$. The {\it inclusion\/} $f\leqslant g$ is also defined pointwise:~$f\leqslant g$ if
and only if $f(x)\leqslant g(x)$, for every $x\in A$.~Endowed with this partial order the set ${\cal F}(A)$ of all fuzzy subsets of $A$ forms a complete residuated lattice, in which the
meet (intersection) $\bigwedge_{i\in I}f_i$ and the join (union) $\bigvee_{i\in I}f_i$ of an
arbitrary family $\{f_i\}_{i\in I}$ of fuzzy subsets of $A$ are functions from $A$ into $L$
defined by
\[
\left(\bigwedge_{i\in I}f_i\right)(x)=\bigwedge_{i\in I}f_i(x), \qquad \left(\bigvee_{i\in
I}f_i\right)(x)=\bigvee_{i\in I}f_i(x),
\]
and the {\it product\/} $f\otimes g$ is a fuzzy subset defined by $f\otimes g
(x)=f(x)\otimes g(x)$, for every $x\in A$.~

Let $A$ and $B$ be non-empty sets.~A {\it fuzzy relation between sets\/} $A$ and $B$ (or a {\it fuzzy relation from $A$ to $B$\/}) is any function from $A\times B$ into~$L$, that is to say, any fuzzy subset of $A\times B$, and the equality, inclusion (ordering), joins and meets of fuzzy relations
are defined as for fuzzy sets.~In particular, a {\it fuzzy relation on a set\/} $A$ is any function from $A\times A$ into $L$, i.e., any fuzzy subset of $A\times A$.~The set of all fuzzy relations from~$A$ to $B$ will be denoted by
${\cal R}(A,B)$, and the set of all fuzzy relations on a set $A$ will be
denoted by ${\cal R}(A)$.~The {\it converse\/} (in some sources called
{\it inverse\/} or {\it transpose\/}) of a fuzzy relation $R \in {\cal R}(A,B)$ is a fuzzy relation
$R^{-1}\in {\cal R}(B,A)$ defined by $R^{-1}(b,a)=R (a,b)$, for all $a\in A$ and $b\in B$.~A {\it crisp relation\/} is a fuzzy relation which takes values only in the set $\{0,1\}$, and if $R $ is a crisp relation of $A$ to $B$, then expressions ``$R(a,b)=1$'' and ``$(a,b)\in R $'' will have the same meaning.

For non-empty sets $A$, $B$ and $C$, and fuzzy relations $R\in {\cal R}(A, B)$ and $S \in {\cal R}(B, C)$, their {\it composition\/}~$R\circ S$ is an fuzzy relation from ${\cal R}(A, C)$ defined by
\begin{equation}\label{eq:comp.rr}
(R \circ S )(a,c)=\bigvee_{b\in B}\,R(a,b)\otimes S(b,c),
\end{equation}
for all $a\in A$ and $c\in C$.~If $R $ and $S $ are crisp relations, then $R\circ S $ is the ordinary composition of relations,~i.e.,
\[
R\circ S=\left\{(a,c)\in A\times C\mid (\exists b\in B)\, (a,b)\in R\ \&\ (b,c)\in S\right\},
\]
and if $R $ and $S $ are functions, then $R\circ S $ is an ordinary composition of functions, i.e.,
$(R\circ S)(a)=S(R(a))$, for every $a\in A$.

Let $A$, $B$, $C$ and $D$ be non-empty sets. Then for any $R_1\in {\cal R}(A, B)$, $R_2\in {\cal R}(B,C)$ and $R_3\in {\cal R}(C, D)$~we~have
\begin{align}
&(R_{1}\circ R_{2})\circ R_{3} = R_{1}\circ (R_{2}\circ R_{3}), \label{eq:comp.as}
\end{align}
and consequently, the parentheses~in (\ref{eq:comp.as}) can be omitted, and for $R_{0}\in {\cal R}(A, B)$,\, $R_{1},R_{2}\in {\cal R}(B,C)$ and $R_{3}\in {\cal R}(C, D)$~we~have that
\begin{align}
&R_{1}\leqslant R_{2}\ \ \text{implies}\ \ R_1^{-1}\leqslant R_2^{-1},\ \ R_{0}\circ R_{1} \leqslant  R_{0}\circ R_{2}, \ \ \text{and}\ \ R_{1}\circ R_{3} \leqslant  R_{2}\circ R_{3}. \label{eq:comp.mon}
\end{align}

Finally, for all $R,R_i\in {\cal R}(A, B)$ ($i\in I$) and $S,S_i\in {\cal R}(B, C)$ ($i\in I$) we have that
\begin{align}
&(R\circ S)^{-1} = S^{-1}\circ R^{-1}, \label{eq:comp.inv} \\
& R\circ \bigl(\bigvee_{i\in I}S_i\bigr) = \bigvee_{i\in I}(R\circ S_i) , \ \
\bigl(\bigvee_{i\in I}R_i\bigr)\circ S = \bigvee_{i\in I}(R_i\circ S), \label{eq:comp.sup} \\
%%%%%
%%%& \varphi\circ \bigl(\bigwedge_{i\in I}\psi_i\bigr) \leqslant \bigwedge_{i\in I}(\varphi\circ \psi_i) , \ \
%%%\bigl(\bigwedge_{i\in I}\varphi_i\bigr)\circ \psi \leqslant \bigwedge_{i\in I}(\varphi_i\circ \psi). \label{eq:comp.inf} \\
&\bigl(\bigvee_{i\in I}R_i\bigr)^{-1} = \bigvee_{i\in I}R_i^{-1}. \label{eq:sup.inv}
\end{align}

We note that if $A$, $B$ and $C$ are finite sets of cardinality $|A|=k$, $|B|=m$ and $|C|=n$, then $R \in {\cal R}(A, B)$~and $S \in {\cal R}(B, C)$ can be treated as $k\times m$ and $m\times n$ fuzzy matrices over $\cal L$, and $R\circ S$ is the matrix~pro\-duct.

A fuzzy relation $E  $ on a set $A$ is%
\begin{itemize}\parskip=0pt
\itemindent=5pt
\item[(R)] {\it reflexive\/} if $E(a,a)=1$, for every $a\in A$;
\item[(S)] {\it symmetric\/} if $E(a,b)=E(b,a)$, for all $a,b\in A$;
\item[(T)] {\it transitive\/} if $E(a,b)\otimes E(b,c)\leqslant E(a,c)$, for all
$a,b,c\in A$.
\end{itemize}
If $E$ is reflexive and transitive, then $E\circ E=E$.
A fuzzy relation on $A$ which is reflexive, symmetric and transitive is called a
{\it fuzzy equivalence relation\/}. With respect to the ordering of fuzzy
relations, the set ${\cal E}(A)$ of all fuzzy equivalence relations on a set $A$ is a complete
lattice, in which the meet coincide with the ordinary intersection of fuzzy relations, but in the
general case, the join in ${\cal E}(A)$ does not coincide with the ordinary union of fuzzy
relations.

For a fuzzy~equi\-valence relation
$E  $ on $A$ and $a\in A$ we define a fuzzy subset $E _a$ of $A$ by:
\[
E _a(x)=E  (a,x),\ \ \text{for every}\ x\in A .
\]
We call $E _a$ an {\it equivalence class\/} of $E$ deter\-mined by $a$.~The set ${A/E}  =\{E _a\,|\, a\in A\}$ is called the {\it
factor set\/} of $A$ with respect to $E$ (cf. \cite{Bel.02,CIB.07}).
%%%
Cardinality of the factor set ${A/E}  $, in notation $\ind (E)$, is called the {\it index\/}~of~$E$.
%%%
The same notation we use for crisp
equivalence relations, i.e., for an equivalence relation $\pi $ on $A$, the related factor set is denoted
by $A/\pi $, the equivalence class of an element $a\in A$ is denoted by $\pi_a$, and the index~of~$\pi $~is denoted by $\ind (\pi)$.

The following properties of fuzzy equivalence relations will be useful in the later work.

\begin{lemma}\label{le:feq}
Let $E$ be a fuzzy equivalence relation on a set $A$ and let $\widehat E$ be its crisp part.~Then
$\widehat E$ is a crisp~equivalence relation on $A$, and for every $a,b\in A$ the following
conditions are equivalent:
\begin{itemize}\parskip-2pt
\item[{\rm (i)}] $E(a,b)=1$;
\item[{\rm (ii)}] $E_a=E_b$;
\item[{\rm (iii)}] $\widehat E_a=\widehat E_b$;
\item[{\rm (iv)}] $E(a,c)=E(b,c)$, for every $c\in A$.
\end{itemize}
Consequently, $\ind(E)=\ind(\widehat E)$.
\end{lemma}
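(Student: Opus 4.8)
The plan is to first pin down the notion of \emph{crisp part}: $\widehat E$ is the crisp relation with $\widehat E(a,b)=1$ exactly when $E(a,b)=1$, and $\widehat E(a,b)=0$ otherwise. With this reading fixed, I would check that $\widehat E$ inherits the three defining properties of a fuzzy equivalence. Reflexivity and symmetry transfer verbatim: $E(a,a)=1$ forces $\widehat E(a,a)=1$, and $E(a,b)=E(b,a)$ gives $\widehat E(a,b)=\widehat E(b,a)$. For transitivity, if $\widehat E(a,b)=\widehat E(b,c)=1$ then $E(a,b)=E(b,c)=1$, and transitivity of $E$ together with the monoid unit law (property (L2)) yield $E(a,c)\ge E(a,b)\otimes E(b,c)=1\otimes 1=1$, hence $\widehat E(a,c)=1$. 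Thus $\widehat E$ is a crisp equivalence relation.

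For the four-way equivalence I would prove the single cycle (i)$\Rightarrow$(iv)$\Rightarrow$(ii)$\Rightarrow$(iii)$\Rightarrow$(i). The decisive step is (i)$\Rightarrow$(iv): assuming $E(a,b)=1$, transitivity gives $E(a,c)\ge E(a,b)\otimes E(b,c)=E(b,c)$ for every $c$, while symmetry (so $E(b,a)=1$) gives the reverse inequality $E(b,c)\ge E(b,a)\otimes E(a,c)=E(a,c)$; hence the $a$-row and $b$-row of $E$ coincide. The rest is bookkeeping from the pointwise definitions of $E_a$ and $\widehat E_a$: (iv)$\Rightarrow$(ii) is immediate since $E(a,c)=E(b,c)$ for all $c$ is precisely $E_a=E_b$; (ii)$\Rightarrow$(iii) holds because $E(a,c)=E(b,c)$ forces $E(a,c)=1\iff E(b,c)=1$, i.e.\ $\widehat E_a=\widehat E_b$; and (iii)$\Rightarrow$(i) follows by evaluating $\widehat E_a=\widehat E_b$ at the point $a$, where reflexivity gives $\widehat E(a,a)=1$, so $\widehat E(b,a)=1$, whence $E(b,a)=1$ and $E(a,b)=1$ by symmetry.

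Finally, the index identity $\ind(E)=\ind(\widehat E)$ I would deduce from the equivalence (ii)$\iff$(iii): it shows that the assignment $E_a\mapsto \widehat E_a$ is a well-defined injection from $A/E$ into $A/\widehat E$, and surjectivity is clear since every class $\widehat E_a$ arises in this way, so the two factor sets are in bijection and have the same cardinality.

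I do not anticipate a genuine obstacle here; the only point requiring care is the repeated use of the unit law $1\otimes x=x$ to turn transitivity of $E$ into equality of the $a$- and $b$-rows, which is the single mechanism underlying every implication in the cycle as well as the transitivity of $\widehat E$.
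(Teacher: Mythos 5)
Your proof is correct and complete; the paper states this lemma in its preliminaries without giving a proof (it is treated as a standard fact about fuzzy equivalence relations, in the spirit of the cited reference on fuzzy equivalence classes), and the argument you give -- reading transitivity plus the unit law as ``$E(a,b)=1$ forces the $a$-row and $b$-row to coincide,'' then cycling through (i)$\Rightarrow$(iv)$\Rightarrow$(ii)$\Rightarrow$(iii)$\Rightarrow$(i) and deducing the index equality from the bijection $E_a\mapsto\widehat E_a$ -- is exactly the standard one the authors intend. No gaps.
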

Note that $\widehat E_a$ denotes the crisp equivalence class of $\widehat E$ determined by $a$.

A fuzzy equivalence relation $E$ on a set $A$ is called a {\it fuzzy
equality\/} if for all $x,y\in A$, $E(x,y)=1$ implies $x=y$. In other words, $E$
is a fuzzy equality if and only if its crisp part $\widehat E$ is a crisp
equality.~For~a~fuzzy~equivalence relation $E$ on a set $A$, a fuzzy relation $\widetilde E$ defined
on the factor set ${A/E}$ by
\[
\widetilde E(E_x,E_y)= E(x,y) ,
\]
for all $x,y\in A$, is well-defined, and it is a fuzzy equality on ${A/E}$.

\section{Uniform fuzzy relations}\label{sectionUniform}

In this section we recall some notions, notation and results from \cite{CIB.09}, concerning
uniform fuzzy relations and related~con\-cepts.

Let $A$ and $B$ be non-empty sets and let $E $ and $F $ be fuzzy equivalence
relations on
$A$ and $B$,
respectively. If a fuzzy relation $R \in {\cal R}(A,B)$ satisfies
\begin{itemize}\itemindent8pt
\item[(EX1)] $R (a_1,b)\otimes E(a_1,a_2)\leqslant R(a_2,b)$, for all $a_1,a_2\in A$ and $b\in B$,
\end{itemize}
then it is called  {\it extensional with respect to\/}~$E$, and if it satisfies
\begin{itemize}\itemindent8pt
\item[(EX2)] $R (a,b_1)\otimes F(b_1,b_2)\leqslant R(a,b_2)$, for all $a\in A$ and $b_1,b_2\in B$,
\end{itemize}
then it is called  {\it extensional with respect to\/}~$F$.~If $R $ is extensional with respect to~$E$ and $F$, and it satisfies
\begin{itemize}\itemindent8pt
\item[(PFF)] $R(a,b_1)\otimes R (a,b_2)\leqslant F(b_1,b_2)$, for all $a\in A$ and $b_1,b_2\in B$,
\end{itemize}
then it is called a {\it partial fuzzy function\/} with respect to~$E$ and $F$.

Partial fuzzy functions were introduced by Klawonn \cite{Klawonn.00}, and studied also by Demirci
\cite{Dem.00,Demirci.03b}. By the adjunction property and symmetry, conditions (EX1) and (EX2) can be also written as
\begin{itemize}\itemindent8pt
\item[(EX1')] $E(a_1,a_2)\leqslant R (a_1,b)\leftrightarrow R(a_2,b)$, for all $a_1,a_2\in A$ and $b\in B$;
\item[(EX2')] $F(b_1,b_2)\leqslant R (a,b_1)\leftrightarrow R(a,b_2)$, for all $a\in A$ and $b_1,b_2\in B$.
\end{itemize}

For any fuzzy relation $R \in {\cal R}(A,B)$ we can define a
fuzzy equivalence relation $E_A^R $ on~$A$~by
\begin{equation}\label{eq:a.phi.1}
E_A^R(a_1,a_2)=\bigwedge_{b\in B} R(a_1,b)\leftrightarrow R (a_2,b),
\end{equation}
for all $a_1,a_2\in A$, and a fuzzy equivalence relation $E_B^R $ on $B$ by
\begin{equation}\label{eq:b.phi.1}
E_B^R(b_1,b_2)=\bigwedge_{a\in A} R(a,b_1)\leftrightarrow R (a,b_2),
\end{equation}
for all $b_1,b_2\in B$.~They will be called {\it fuzzy equivalence relations\/} on $A$ and $B$
{\it induced by\/} $R $, and in particular, $E_A^R $ will be called the {\it kernel\/} of $R $, and
$E_B^R $ the {\it co-kernel\/} of $R $.~According to (EX1')~and (EX2'), $E_A^R $ and $E_B^R $ are the
greatest $\cal $fuzzy equivalence relations on $A$ and $B$, respectively, such that $R $ is
extensional with respect to~them.

A fuzzy relation $R \in {\cal R}(A,B)$ is called just a {\it partial fuzzy function\/} if it is a
partial fuzzy function with respect to $E_A^R $~and~$E_B^R $ \cite{CIB.09}. Partial fuzzy functions
were characterized in \cite{CIB.09} as follows:

\begin{theorem}\label{th:PFF}
Let $A$ and $B$ be non-empty sets and let $R \in {\cal R}(A, B)$ be a fuzzy relation. Then the~fol\-low\-ing conditions are equivalent:
\begin{itemize}\parskip=0pt
\item[{\rm (i)}] $R $ is a partial fuzzy function;
\item[{\rm (ii)}] $R^{-1}$ is a partial fuzzy function;
\item[{\rm (iii)}] $R^{-1}\circ R\leqslant E_B^R $;
\item[{\rm (iv)}] $R \circ R^{-1}\leqslant E_A^R $;
\item[{\rm (v)}] $R \circ R^{-1}\circ R \leqslant R$.
\end{itemize}
\end{theorem}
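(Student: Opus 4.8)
The plan is to reduce the definition of a partial fuzzy function to a single pointwise inequality, identify that inequality with a composition, and then move among the three ``matrix'' conditions (iii)--(v) using residuation together with the self-duality of the setup under converse.

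First I would record that, since $E_A^R$ and $E_B^R$ are by their very construction in (\ref{eq:a.phi.1}) and (\ref{eq:b.phi.1}) the greatest fuzzy equivalences for which $R$ satisfies (EX1) and (EX2) respectively, those two extensionality conditions hold automatically for $R$ with respect to $E_A^R$ and $E_B^R$. Hence ``$R$ is a partial fuzzy function'' collapses to the single requirement (PFF): $R(a,b_1)\otimes R(a,b_2)\le E_B^R(b_1,b_2)$ for all $a\in A$ and $b_1,b_2\in B$. Unwinding the composition gives $(R^{-1}\circ R)(b_1,b_2)=\bigvee_{a\in A}R(a,b_1)\otimes R(a,b_2)$, and since a supremum lies below an element exactly when each summand does, (PFF) is literally the same as $R^{-1}\circ R\le E_B^R$. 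This yields (i) $\iff$ (iii) with essentially no computation. Running the identical argument for $R^{-1}$, and observing that kernel and co-kernel simply swap under converse, i.e. $E_A^{R^{-1}}=E_A^R$ and $E_B^{R^{-1}}=E_B^R$, gives (ii) $\iff$ (iv).

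The substance is the equivalence (iii) $\iff$ (v). For (iii) $\implies$ (v) I would compose $R^{-1}\circ R\le E_B^R$ on the left by $R$, using monotonicity of $\circ$, to get $R\circ R^{-1}\circ R\le R\circ E_B^R$, and then invoke extensionality of $R$ with respect to $E_B^R$, i.e. (EX2), in the form $\bigvee_{b_1}R(a,b_1)\otimes E_B^R(b_1,b_2)\le R(a,b_2)$, which says precisely $R\circ E_B^R\le R$; chaining the two inequalities gives (v). For the converse (v) $\implies$ (iii), the idea is to expand $(R\circ R^{-1}\circ R)(a',b_2)=\bigvee_{a'',b}R(a',b)\otimes R(a'',b)\otimes R(a'',b_2)$ and retain only the single term with $b=b_1$ and $a''=a$; condition (v) then forces $R(a',b_1)\otimes R(a,b_1)\otimes R(a,b_2)\le R(a',b_2)$, and by the adjunction (\ref{eq:adj}) this is exactly $R(a,b_1)\otimes R(a,b_2)\le R(a',b_1)\to R(a',b_2)$. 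The symmetric choice $b=b_2$, evaluating the composite at $(a',b_1)$, yields the reverse implication, so $R(a,b_1)\otimes R(a,b_2)\le R(a',b_1)\lra R(a',b_2)$ for every $a'$; taking the meet over $a'$ gives $R(a,b_1)\otimes R(a,b_2)\le E_B^R(b_1,b_2)$, which is (iii).

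Finally I would close the loop by duality. Taking converses and using (\ref{eq:comp.inv}) shows that (v) written for $R^{-1}$, namely $R^{-1}\circ R\circ R^{-1}\le R^{-1}$, is just the converse of (v) for $R$, so (v) is invariant under $R\mapsto R^{-1}$. Applying the already-established (iii) $\iff$ (v) to $R^{-1}$, together with $E_A^{R^{-1}}=E_A^R$, then gives (iv) $\iff$ (v); combined with (i) $\iff$ (iii), (ii) $\iff$ (iv) and (iii) $\iff$ (v), all five conditions are equivalent. I expect the only genuinely delicate step to be (v) $\implies$ (iii): one must select from the triple supremum defining $R\circ R^{-1}\circ R$ exactly the term that, after a single use of adjunction, reproduces one half of the biresiduum defining $E_B^R$, and one must be careful to transpose kernels and co-kernels correctly when dualizing.
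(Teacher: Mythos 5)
Your proof is correct. Note that the paper does not actually prove Theorem \ref{th:PFF} at all --- it is quoted as a known characterization from reference [9] (\'Ciri\'c, Ignjatovi\'c, Bogdanovi\'c, \emph{Uniform fuzzy relations and fuzzy functions}) --- so there is no in-paper argument to compare against; your write-up supplies a complete, self-contained proof. Each step checks out: the reduction of (i) to the single condition (PFF) is legitimate because the paper explicitly records that $E_A^R$ and $E_B^R$ are the greatest fuzzy equivalences with respect to which $R$ is extensional, so extensionality is automatic; the identification of (PFF) with $R^{-1}\circ R\le E_B^R$ is immediate from $(R^{-1}\circ R)(b_1,b_2)=\bigvee_a R(a,b_1)\otimes R(a,b_2)$; the implication (iii)$\implies$(v) via $R\circ E_B^R\le R$ (which is just (EX2) in composed form) is sound; and in (v)$\implies$(iii) the selection of the two single terms from the triple supremum, followed by adjunction and the meet over $a'$, correctly reconstructs both halves of the biresiduum defining $E_B^R$. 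The closing duality argument, using $(R\circ R^{-1}\circ R)^{-1}=R^{-1}\circ R\circ R^{-1}$ together with the swap of kernel and co-kernel under converse, cleanly delivers (iv)$\iff$(v) and ties all five conditions together.
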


A fuzzy relation $R \in {\cal R}(A, B)$ is called an $\cal L$-{\it function\/} if
for each $a\in A$ there exists $b\in B$ such that $R (a,b)=1$ \cite{Demirci.05a}, and it is called {\it
surjective\/} if for each $b\in B$ there exists $a\in A$ such that $R (a,b)=1$, i.e., if
$R$ is an $\cal L$-function. For a surjective fuzzy relation $R \in {\cal R}(A, B)$ we also say that it is a fuzzy relation of $A$ {\it onto\/} $B$. If
$R $ is an $\cal L$-function and it is surjective, i.e., if both $R $ and
$R^{-1}$ are $\cal L$-functions, then $R $ is called a {\it surjective $\cal
L$-function\/}.

Let us note that a fuzzy relation $R\in {\cal R}(A, B)$ is an $\cal L$-function if and only if there exists a function $\psi :A\to B$ such that $R (a,\psi(a))=1$, for all $a\in
A$ (cf.~\cite{Demirci.03b,Demirci.05a}). A function $\psi $ with this property we will call a {\it crisp
description\/} of $R $, and we will denote by $CR(R)$ the set of all such functions.

An $\cal L$-function which is a partial fuzzy function with respect to~$E$ and $F$ is called a {\it
perfect~fuzzy function\/} with respect to~$E$ and $F$.~Perfect fuzzy functions were introduced and
studied by Demirci \cite{Dem.00,Demirci.03b}.~A fuzzy relation $R \in
{\cal R}(A, B)$ which is a perfect fuzzy function with respect to $E_A^R $~and~$E_B^R $ will be called just a {\it perfect fuzzy function\/}.

Let $A$ and $B$ be non-empty sets and let $E$ be a fuzzy equivalence relation on $B$.~An ordinary function $\psi :A\to B$ is called {\it $E$-surjective\/} if for any $b\in B$ there exists $a\in A$ such that $E(\psi(a),b)=1$.~In other words,~$\psi $ is $E$-surjective if and only if $\psi \circ E^\sharp $ is an ordinary surjective function
of $A$ onto $B/E$, where $E^\sharp :B\to B/E$~is a function given by $E^\sharp (b)=E_b$,
for each $b\in B$. It is clear that $\psi $ is an $E$-surjective function if and only if~its image $\im \psi $ has a non-empty intersection with every equivalence class of the
crisp equivalence relation~$\ker (E)$.

Let $A$ and $B$ be non-empty sets and let $R\in {\cal R}(A, B)$ be a partial fuzzy function.
If, in addition, $R $ is a surjective $\cal L$-function, then it will be called a {\it
uniform fuzzy relation\/} \cite{CIB.09}. In other words, a uniform fuzzy relation is a perfect fuzzy function
having the additional property that it is surjective.~A uniform fuzzy
relation that~is also a crisp relation is called a {\it uniform relation\/}.~The following characterizations of uniform fuzzy relations provided in \cite{CIB.09} will be used in the further text.

\begin{theorem}\label{th:ufr} Let $A$ and $B$ be non-empty sets and let $R \in {\cal R}(A, B)$
be a fuzzy relation. Then the~follow\-ing conditions are equivalent:
\begin{itemize}\parskip=0pt
\item[{\rm (i)}] $R $ is a uniform fuzzy relation;
\item[{\rm (ii)}] $R^{-1}$ is a uniform fuzzy relation;
\item[{\rm (iii)}] $R $ is a surjective $\cal L$-function and
\begin{equation}\label{eq:3phi}
R\circ R^{-1}\circ R =R ;
\end{equation}
\item[{\rm (iv)}] $R $ is a surjective $\cal L$-function and
\begin{equation}\label{eq:EA.phi}
E_A^R
= R\circ R^{-1};
\end{equation}
\item[{\rm (v)}] $R $ is a surjective $\cal L$-function and
\begin{equation}\label{eq:EB.phi}
E_B^R = R^{-1}\circ R;
\end{equation}
\item[{\rm (vi)}] $R $ is an $\cal L$-function, and for all $\psi\in
CR(R )$, $a\in A$ and $b\in B$ we have that
$\psi $ is $E_B^R $-surjective and
\begin{equation}\label{eq:phi.EB}
R (a,b)=E_B^R (\psi(a),b);
\end{equation}
\item[{\rm (vii)}] $R $ is an $\cal L$-function, and for all $\psi\in
CR(R )$ and $a_1,a_2\in A$ we have that
$\psi $ is $E_B^R $-surjective and
\begin{equation}\label{eq:phi.EA}
R (a_1,\psi(a_2))=E_A^R (a_1,a_2).
\end{equation}
\end{itemize}
\end{theorem}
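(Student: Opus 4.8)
The plan is to reduce everything to the already-established characterization of partial fuzzy functions in Theorem \ref{th:PFF}, since a uniform fuzzy relation is by definition precisely a partial fuzzy function that is also a surjective $\mathcal{L}$-function. Thus the whole burden is to see how the inequalities appearing in Theorem \ref{th:PFF} become equalities once surjectivity of the $\mathcal{L}$-function is assumed, and to translate these equalities into the pointwise statements (vi) and (vii).

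First I would record two inequalities that hold automatically whenever $R$ is an $\mathcal{L}$-function, i.e. whenever for each $a\in A$ there is $b$ with $R(a,b)=1$. Fixing such a $b$ and instantiating the relevant suprema at it, one gets $R\le R\circ R^{-1}\circ R$ and $E_A^R\le R\circ R^{-1}$, the latter using the definition of $E_A^R$ together with $x\leftrightarrow 1=x$. Paired with Theorem \ref{th:PFF}(v) and (iv), which supply the reverse inequalities exactly when $R$ is a partial fuzzy function, this yields the equivalences (i)$\iff$(iii) and (i)$\iff$(iv). The equivalence (i)$\iff$(ii) holds because the condition ``surjective $\mathcal{L}$-function'' is symmetric in $R$ and $R^{-1}$, while Theorem \ref{th:PFF}(i)$\iff$(ii) takes care of the partial-fuzzy-function clause. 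Finally, since the kernel of $R^{-1}$ equals $E_B^R$ and $R^{-1}\circ(R^{-1})^{-1}=R^{-1}\circ R$, statement (v) for $R$ is exactly statement (iv) applied to $R^{-1}$, so (i)$\iff$(v) drops out of the equivalences already obtained for $R^{-1}$.

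It remains to tie in the crisp-description conditions (vi) and (vii). For the forward directions I would start from a uniform $R$, pick any $\psi\in CR(R)$, so that $R(a,\psi(a))=1$. Using the already-proved equality $E_B^R=R^{-1}\circ R$ and the extensionality of $R$ with respect to $E_B^R$, i.e. (EX2), one obtains $R(a,b)=E_B^R(\psi(a),b)$: the inequality $\ge$ comes from instantiating the composition at $\psi(a)$, and $\le$ from (EX2) applied with $R(a,\psi(a))=1$. That $\psi$ is $E_B^R$-surjective then follows from surjectivity of $R$, since any $b$ with $R(a,b)=1$ gives $E_B^R(\psi(a),b)=R(a,b)=1$. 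Condition (vii) is obtained analogously from $E_A^R=R\circ R^{-1}$, evaluating the composition at the column $\psi(a_2)$ and reading off $E_A^R$ at that same column via its definition.

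For the converse directions the strategy is to recover surjectivity and the partial-fuzzy-function property from the pointwise data. Assuming (vi), the $E_B^R$-surjectivity of $\psi$ together with $R(a,b)=E_B^R(\psi(a),b)$ immediately gives that $R$ is surjective, and substituting this formula into $R^{-1}\circ R$ and invoking transitivity and symmetry of $E_B^R$ yields $R^{-1}\circ R\le E_B^R$, so $R$ is a partial fuzzy function by Theorem \ref{th:PFF}(iii) and hence uniform. The genuinely delicate step, and the one I expect to be the main obstacle, is the implication (vii)$\implies$(i): since (vii) constrains $R$ only on the special columns $\psi(a_2)$, I must use the $E_B^R$-surjectivity of $\psi$ to reach the remaining columns. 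Concretely, I would first check surjectivity of $R$ by taking, for any $b$, an $a$ with $E_B^R(\psi(a),b)=1$ and applying (EX2) to $R(a,\psi(a))=1$ to force $R(a,b)=1$. Then, for arbitrary $a_1,a_2,b$, choosing $a'$ with $E_B^R(\psi(a'),b)=1$ and using (EX2) in both directions to replace $R(a_i,b)$ by $R(a_i,\psi(a'))=E_A^R(a_i,a')$, transitivity and symmetry of $E_A^R$ give $R(a_1,b)\otimes R(a_2,b)\le E_A^R(a_1,a_2)$, while evaluation at $b=\psi(a_2)$ supplies the reverse bound; hence $R\circ R^{-1}=E_A^R$, which is condition (iv), and the cycle closes.
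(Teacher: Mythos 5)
Your proposal is correct, but note that the paper does not prove Theorem~\ref{th:ufr} at all: it is quoted verbatim from \cite{CIB.09} as a known characterization, so there is no in-paper argument to compare against. Judged on its own, your reduction to Theorem~\ref{th:PFF} is sound and efficient. The two ``free'' inequalities $R\le R\circ R^{-1}\circ R$ and $E_A^R\le R\circ R^{-1}$ do follow from the $\mathcal L$-function property exactly as you say (instantiate the suprema at $b=\psi(a)$, respectively $b=\psi(a_2)$, and use $x\leftrightarrow 1=x$), and pairing them with items (iv) and (v) of Theorem~\ref{th:PFF} gives (i)$\iff$(iii)$\iff$(iv) cleanly; (ii) and (v) then come from the symmetry $E_B^R=E_A^{R^{-1}}$ and the fact that ``surjective $\mathcal L$-function'' is self-dual under inversion. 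The one point worth making explicit is that (EX2) for $E_B^R$ (and (EX1) for $E_A^R$) holds for \emph{every} fuzzy relation $R$, not only for partial fuzzy functions, since these are by construction the greatest equivalences with respect to which $R$ is extensional; your converse arguments for (vi) and (vii) rely on this, and it is available. With that in hand, your handling of (vii)$\implies$(i) --- using $E_B^R$-surjectivity of $\psi$ to transport the hypothesis from the columns $\psi(a_2)$ to arbitrary columns via (EX2), then closing with transitivity of $E_A^R$ to get $R\circ R^{-1}\le E_A^R$ --- is exactly the right move and contains no gap.
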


\begin{corollary}\label{cor:ufr} {\rm \cite{CIB.09}}
Let $A$ and $B$ be non-empty sets, and let $\varphi \in {\cal F}(A\times B)$
be a uniform~fuzzy relation.~Then for all $\psi\in
CR(\varphi )$ and $a_1,a_2\in A$ we have that
\begin{equation}\label{eq:cor.ufr}
E_A^\varphi (a_1,a_2) = E_B^\varphi (\psi(a_1),\psi(a_2)).
\end{equation}
\end{corollary}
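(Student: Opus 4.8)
The plan is to derive the claimed equality directly from the characterizations of uniform fuzzy relations in Theorem \ref{th:ufr}, specifically from the two equalities appearing in conditions (vi) and (vii). Since $\varphi$ is a uniform fuzzy relation, both of these conditions hold simultaneously, and the key observation is that each of them supplies an expression for one and the same quantity, namely $\varphi(a_1,\psi(a_2))$: condition (vii) expresses it through the kernel $E_A^\varphi$, while condition (vi) expresses it through the co-kernel $E_B^\varphi$.

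First I would fix an arbitrary crisp description $\psi\in CR(\varphi)$ and arbitrary elements $a_1,a_2\in A$. Applying \eqref{eq:phi.EA} from condition (vii) of Theorem \ref{th:ufr} to the pair $(a_1,a_2)$ yields $\varphi(a_1,\psi(a_2))=E_A^\varphi(a_1,a_2)$, which writes the value of $\varphi$ at $(a_1,\psi(a_2))$ in terms of the kernel.

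Next I would invoke \eqref{eq:phi.EB} from condition (vi), which is asserted for all $a\in A$ and $b\in B$; specializing it to $a=a_1$ and $b=\psi(a_2)$ gives $\varphi(a_1,\psi(a_2))=E_B^\varphi(\psi(a_1),\psi(a_2))$, now writing the same value in terms of the co-kernel. Chaining these two identities through their common left-hand side $\varphi(a_1,\psi(a_2))$ immediately produces $E_A^\varphi(a_1,a_2)=E_B^\varphi(\psi(a_1),\psi(a_2))$, which is precisely \eqref{eq:cor.ufr}.

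I do not expect a genuine obstacle here, since the statement is essentially a repackaging of Theorem \ref{th:ufr}. The only point requiring care is to confirm that conditions (vi) and (vii) may both be invoked for the \emph{same} crisp description $\psi$, and that each of their stated equalities is valid for arbitrary arguments; both of these facts are built into the theorem's formulation, so the substitution $b=\psi(a_2)$ is legitimate and the two expressions for $\varphi(a_1,\psi(a_2))$ can indeed be equated.
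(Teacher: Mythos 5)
Your proof is correct: the paper itself states this corollary without proof (it is imported from \cite{CIB.09}), and your derivation --- equating the two expressions for $\varphi(a_1,\psi(a_2))$ supplied by conditions (vi) and (vii) of Theorem \ref{th:ufr}, both of which hold for every $\psi\in CR(\varphi)$ --- is exactly the intended reason the statement is a corollary of that theorem. No gaps.
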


Let $A$ and $B$ be non-empty sets. According to Theorem \ref{th:ufr}, a fuzzy relation $R \in {\cal R}(A, B)$ is a uniform fuzzy relation if and only if its inverse relation $R^{-1}$ is a uniform fuzzy relation.~Moreover, by (iv) and (v) of Theorem \ref{th:ufr}, we have that the kernel of $R^{-1}$ is the co-kernel of $R $, and conversely, the co-kernel of $R^{-1}$ is the kernel of $R $, that is
\[
E_B^{R^{-1}}=E_B^R \quad\text{and}\quad E_A^{R^{-1}}=E_A^R .
\]

The next theorem proved in \cite{CIB.09} will be very useful in our further work.

\begin{theorem}\label{th:Rtilde}
Let $A$ and $B$ be  non-empty sets, let $R \in {\cal R}(A, B)$ be a
uniform fuzzy relation,~let $E=E_A^R $ and $F=E_B^R $, and let
a function $\widetilde R :A/E\to B/F$ be defined by
\begin{equation}\label{eq:tphi}
\widetilde R (E_a)= F_{\psi(a)}, \ \ \text{for any $a\in A$ and $\psi \in CR(R )$.}
\end{equation}
Then $\widetilde R $ is a well-defined function {\rm ({\it it does not depend on the choice of $\psi \in CR(R )$ and $a\in A$\/})}, it is a bijective function  of $A/E$ onto $B/F$, and $(\widetilde R)^{-1}=\widetilde{R^{-1}}$.
\end{theorem}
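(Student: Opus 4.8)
The plan is to establish the three assertions---well-definedness, bijectivity, and the converse identity---in turn, leaning throughout on the characterizations of uniform fuzzy relations from Theorem~\ref{th:ufr}, Corollary~\ref{cor:ufr}, and the equivalences in Lemma~\ref{le:feq}. The organizing observation is that, by Lemma~\ref{le:feq}, equality of classes translates into membership values: $E_a=E_{a'}$ if and only if $E(a,a')=1$, and likewise $F_b=F_{b'}$ if and only if $F(b,b')=1$.

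For well-definedness I must check that $F_{\psi(a)}$ depends neither on the representative $a$ of $E_a$ nor on the choice of $\psi\in CR(R)$. Independence of the representative (and, simultaneously, injectivity below) will follow from Corollary~\ref{cor:ufr}, which gives $E_A^R(a,a')=E_B^R(\psi(a),\psi(a'))$; combined with Lemma~\ref{le:feq} this reads $E_a=E_{a'}\iff F_{\psi(a)}=F_{\psi(a')}$. For independence of the description, I would take $\psi,\psi'\in CR(R)$ and apply Theorem~\ref{th:ufr}(vi) with $b=\psi'(a)$: since $R(a,\psi'(a))=1$, the identity $R(a,b)=E_B^R(\psi(a),b)$ forces $E_B^R(\psi(a),\psi'(a))=1$, hence $F_{\psi(a)}=F_{\psi'(a)}$. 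Chaining the two yields $F_{\psi(a)}=F_{\psi'(a')}$ whenever $E_a=E_{a'}$, so $\widetilde R$ is well defined.

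Bijectivity splits cleanly. Injectivity is exactly the reverse implication of the equivalence noted above: if $F_{\psi(a)}=F_{\psi(a')}$ then $E_B^R(\psi(a),\psi(a'))=1$, whence $E_A^R(a,a')=1$ by Corollary~\ref{cor:ufr}, i.e.\ $E_a=E_{a'}$. Surjectivity will come from the $E_B^R$-surjectivity of $\psi$ guaranteed by Theorem~\ref{th:ufr}(vi): given $F_b\in B/F$, there is $a\in A$ with $E_B^R(\psi(a),b)=1$, so $F_{\psi(a)}=F_b$ and therefore $\widetilde R(E_a)=F_b$.

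For the converse identity I would first record that $R^{-1}$ is again uniform by Theorem~\ref{th:ufr}(ii), with kernel $E_B^{R^{-1}}=F$ and co-kernel $E_A^{R^{-1}}=E$ as noted before the statement, so that $\widetilde{R^{-1}}$ is a bijection $B/F\to A/E$ of the same shape as $(\widetilde R)^{-1}$. Since $\widetilde R$ is a bijection, it suffices to show $\widetilde{R^{-1}}\circ\widetilde R=\mathrm{id}_{A/E}$. Fixing $a\in A$, $\psi\in CR(R)$ and $\chi\in CR(R^{-1})$, I compute $\widetilde{R^{-1}}(\widetilde R(E_a))=\widetilde{R^{-1}}(F_{\psi(a)})=E_{\chi(\psi(a))}$; the task is then to prove $E_{\chi(\psi(a))}=E_a$. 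This reduces to $E_A^R(a,\chi(\psi(a)))=1$, and here I would invoke Theorem~\ref{th:ufr}(iv), $E_A^R=R\circ R^{-1}$: since $R(a,\psi(a))=1$ (as $\psi\in CR(R)$) and $R(\chi(\psi(a)),\psi(a))=R^{-1}(\psi(a),\chi(\psi(a)))=1$ (as $\chi\in CR(R^{-1})$), the $b=\psi(a)$ term in $(R\circ R^{-1})(a,\chi(\psi(a)))=\bigvee_{b}R(a,b)\otimes R(\chi(\psi(a)),b)$ equals $1$, forcing the supremum to be $1$. The main obstacle, though it is essentially bookkeeping, is to keep the kernels and co-kernels of $R$ and $R^{-1}$ straight and to select descriptions on the correct sides; once the membership values are pinned down at the single index $b=\psi(a)$, the rest is immediate from Lemma~\ref{le:feq} and the bijectivity already established.
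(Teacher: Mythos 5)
Your proof is correct. Note that the paper itself does not prove this statement; it imports it from reference \cite{CIB.09} (``The next theorem proved in \cite{CIB.09} \dots''), so there is no in-paper argument to compare against. Your argument is a sound, self-contained derivation from exactly the tools the paper restates for this purpose: Corollary~\ref{cor:ufr} together with Lemma~\ref{le:feq} handles independence of the representative and injectivity in one stroke, Theorem~\ref{th:ufr}(vi) gives both independence of the crisp description and surjectivity, and the identity $E_A^R=R\circ R^{-1}$ from Theorem~\ref{th:ufr}(iv), evaluated at the single index $b=\psi(a)$, settles $\widetilde{R^{-1}}(\widetilde R(E_a))=E_a$, which suffices for $(\widetilde R)^{-1}=\widetilde{R^{-1}}$ once bijectivity is known. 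The only caveat is notational: the paper's convention for composition of functions is $(R\circ S)(a)=S(R(a))$, so the symbol $\widetilde{R^{-1}}\circ\widetilde R$ would there denote the map on $B/F$; since you immediately write out $\widetilde{R^{-1}}(\widetilde R(E_a))$ the intended meaning is unambiguous and the argument is unaffected.
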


\section{Weakly linear systems}\label{sec:wlsyst}

Now we start with weakly linear systems of fuzzy relation inequalities and equations.~First we recall some concepts from \cite{ICB.10}.

Let $A$ be a non-empty set (not necessarily finite), let $\{V_i\}_{i\in I}$ be a given family of fuzzy relations on $A$ (where $I$ is also not necessarily finite), and let $W$ be a given fuzzy relation on $A$.

In \cite{ICB.10} the following systems of fuzzy relation inequalities and equations have been introduced:
\begin{align}
&U\circ V_i\le V_i\circ U\qquad (i\in I),\qquad\qquad U\le W\,; \label{eq:w11}\tag{$wl$1-1}\\
&V_i\circ U\le U\circ V_i\qquad (i\in I),\qquad\qquad U\le W\,; \label{eq:w12}\tag{$wl$1-2} \\
&U\circ V_i= V_i\circ U\qquad (i\in I),\qquad\qquad U\le W\,; \label{eq:w13}\tag{$wl$1-3}
\end{align}\vspace{-25pt}
\begin{align}
&U\circ V_i\le V_i\circ U,\qquad U^{-1}\circ V_i\le V_i\circ U^{-1},\qquad (i\in I),\qquad\qquad U\le W\,,\ \ U^{-1}\le W\,; \label{eq:w14} \tag{$wl$1-4}\\
&V_i\circ U\le U\circ V_i,\qquad V_i\circ U^{-1}\le U^{-1}\circ V_i,\qquad (i\in I),\qquad\qquad U\le W\,,\ \ U^{-1}\le W\,; \label{eq:w15} \tag{$wl$1-5}\\
&U\circ V_i= V_i\circ U,\qquad U^{-1}\circ V_i= V_i\circ U^{-1}\qquad\ \ (i\in I),\qquad\qquad U\le W\,,\ \ U^{-1}\le W\,; \label{eq:w16}\tag{$wl$1-6}
\end{align}
where $U$ is an unknown fuzzy relation on $A$.~Clearly, a fuzzy relation $R\in {\cal R}(A)$ is a solution to (\ref{eq:w14}) (resp. (\ref{eq:w15}), (\ref{eq:w16})) if and only if both $R$ and $R^{-1}$ are solutions to (\ref{eq:w11}) (resp.~(\ref{eq:w12}), (\ref{eq:w13})), and moreover, a symmetric fuzzy~relation is a solution to (\ref{eq:w14}) (resp.~(\ref{eq:w15}), (\ref{eq:w16})) if and only if it is solution to (\ref{eq:w11}) (resp.~(\ref{eq:w12}), (\ref{eq:w13})).
Clearly, if $W(a_1,a_2)=1$, for all $a_1,a_2\in A$, then the inequality $U\le W$ becomes trivial, and~it~can be omitted.~Systems (\ref{eq:w11})--(\ref{eq:w16}) were called in \cite{ICB.10} {\it weakly linear\/}.~More precisely, in the present paper we will~call them {\it homogeneous weakly linear systems\/}.~For the sake of convenience, for each  $t\in \{1,\ldots ,6\}$, system ($wl1$-$t$) will be denoted by $WL^{\text{1-$t$}}(A,I,V_i,W)$.~If $W(a_1,a_2)=1$, for all $a_1,a_2\in A$, then system ($wl1$-$t$) is denoted simply by $WL^{\text{1-$t$}}(A,I,V_i)$.

It has been proven in \cite{ICB.10} that each of these systems has the greatest solution, and if the given fuzzy relation $W$ is a fuzzy quasi-order, then the greatest solutions to (\ref{eq:w11}), (\ref{eq:w12}) and (\ref{eq:w13}) are also fuzzy quasi-orders, and if $W$ is a fuzzy equivalence, then the greatest solutions to (\ref{eq:w14}), (\ref{eq:w15}) and (\ref{eq:w16}) are fuzzy equivalences.~In the same paper a method for computing these greatest solutions has been developed, and it comes down to the computing of the greatest post-fixed point, contained in a given fuzzy
relation, of an isotone function on the lattice of fuzzy relations.

As we have already said, the purpose of this paper is to introduce the heterogeneous weakly linear~systems, to prove that each of these heterogeneous systems also has the greatest solution (which may be empty), and to show that the method from \cite{ICB.10} can be adapted to compute the greatest solutions to heterogeneous weakly linear systems.

In the further text, let $A$ and $B$ be non-empty sets (not necessarily finite), let $\{V_i\}_{i\in I}$ be a given family of fuzzy relations on $A$ and $\{W_i\}_{i\in I}$ a given family of fuzzy relations on $B$ (where $I$ is also not necessarily finite), and let $Z$ be a given fuzzy relation between $A$ and $B$.

We will consider systems
\begin{align}
&U^{-1}\circ V_i\le W_i\circ U^{-1} &&(i\in I), && U\le Z\,;&&\hspace{60mm} \label{eq:w21}\tag{$wl$2-1} \\
&V_i\circ U\le U\circ W_i && (i\in I), && U\le Z\,;&&\hspace{60mm} \label{eq:w22}\tag{$wl$2-2}
\end{align}
and the systems obtained by combinations of (\ref{eq:w21}) and (\ref{eq:w22}) (for $U$ and $U^{-1}$)
\begin{align}
&U^{-1}\circ V_i\le W_i\circ U^{-1} && U\circ W_i\le V_i\circ U && (i\in I), && U\le Z; \hspace{30mm}\label{eq:w23}\tag{$wl$2-3} \\
&V_i\circ U\le U\circ W_i && W_i\circ U^{-1}\le U^{-1}\circ V_i && (i\in I), && U\le Z;\hspace{30mm}\label{eq:w24}\tag{$wl$2-4} \\
& V_i\circ U= U\circ W_i && {} && (i\in I), && U\le Z; \hspace{30mm} \label{eq:w25}\tag{$wl$2-5}\\
& U^{-1}\circ V_i= W_i\circ U^{-1} && {} && (i\in I), && U\le Z; \hspace{30mm} \label{eq:w26}\tag{$wl$2-6}
\end{align}
where $U$ is an unknown fuzzy relation between $A$ and $B$.~Systems (\ref{eq:w21})--(\ref{eq:w26}) will be called
{\it heterogeneous weakly linear systems\/}.~For the sake of convenience, for each  $t\in \{1,\ldots ,6\}$, system ($wl2$-$t$) will be denoted by
$WL^{\text{2-$t$}}(A,B,I,V_i,W_i,Z)$.~If $Z(a,b)=1$, for all $a\in A$ and $b\in B$, then system ($wl2$-$t$) will be denoted simply by $WL^{\text{2-$t$}}(A,B,I,V_i,W_i)$.

In a very simple way we can prove the following two propositions.

\begin{proposition}\label{prop:dual.hom}
Let $A$ be a non-empty set, let $\{V_i\}_{i\in I}$ be a family of fuzzy relations on $A$, and let $W$ be a fuzzy relation on $A$.~For an arbitrary fuzzy relation $R\in {\cal R}(A)$ the following is true:
\begin{itemize}\parskip=0pt
\item[{\rm (a)}] $R$ is a solution to $WL^{1-1}(A,I,V_i,W)$ if and only if $R^{-1}$ is a solution to $WL^{1-2}(A,I,V_i^{-1},W^{-1})$;
\item[{\rm (b)}] $R$ is a solution to $WL^{1-4}(A,I,V_i,W)$ if and only if $R$ is a solution to $WL^{1-5}(A,I,V_i^{-1},W^{-1})$.
\end{itemize}
\end{proposition}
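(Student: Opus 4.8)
The plan is to exploit that taking the converse $(-)^{-1}$ is an order-preserving involution on ${\cal R}(A)$ --- order-preserving by (\ref{eq:comp.mon}), and involutive because $(R^{-1})^{-1}=R$ --- combined with the identity $(R\circ S)^{-1}=S^{-1}\circ R^{-1}$ from (\ref{eq:comp.inv}). With these two facts, each part reduces to checking that applying $(-)^{-1}$ to every inequality of one system produces, term by term, exactly the inequalities of the other. No composition will have to be expanded or estimated, so the argument is essentially bookkeeping.

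For part (a), I would start from $R$ being a solution to $WL^{1-1}(A,I,V_i,W)$, that is $R\circ V_i\le V_i\circ R$ for all $i\in I$ and $R\le W$. Taking converses and using (\ref{eq:comp.inv}) rewrites $R\circ V_i\le V_i\circ R$ as $V_i^{-1}\circ R^{-1}\le R^{-1}\circ V_i^{-1}$, which is precisely the $i$-th inequality of $WL^{1-2}(A,I,V_i^{-1},W^{-1})$ with unknown $R^{-1}$; similarly $R\le W$ becomes $R^{-1}\le W^{-1}$. Because the converse preserves $\le$ and is an involution, every implication here is reversible, which yields the equivalence.

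For part (b), the most direct route is to list the four defining inequalities of $WL^{1-4}(A,I,V_i,W)$ for $R$, namely $R\circ V_i\le V_i\circ R$, $R^{-1}\circ V_i\le V_i\circ R^{-1}$, $R\le W$ and $R^{-1}\le W$, and again apply $(-)^{-1}$ to each. By (\ref{eq:comp.inv}) the first turns into $V_i^{-1}\circ R^{-1}\le R^{-1}\circ V_i^{-1}$ and the second into $V_i^{-1}\circ R\le R\circ V_i^{-1}$, while the last two become $R^{-1}\le W^{-1}$ and $R\le W^{-1}$; these four are exactly the conditions defining $WL^{1-5}(A,I,V_i^{-1},W^{-1})$ for $R$, so the two solution sets coincide. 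Alternatively, part (b) follows formally from part (a) together with the observation recorded just after (\ref{eq:w16}) --- that $R$ solves (\ref{eq:w14}) (resp.\ (\ref{eq:w15})) iff both $R$ and $R^{-1}$ solve (\ref{eq:w11}) (resp.\ (\ref{eq:w12})).

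Since everything is a one-line computation once the tools are in place, I do not expect a genuine obstacle. The only point requiring care is tracking the direction of each inequality after inversion: inverting $R\circ V_i\le V_i\circ R$ swaps the two composed factors, so the inequality for $R$ in one system corresponds to the inequality involving $R^{-1}$ in the dual system, and conversely. Keeping this pairing straight is what makes the term-by-term match come out correctly.
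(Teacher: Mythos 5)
Your proof is correct; the paper itself omits the argument, stating only that the two propositions can be proved ``in a very simple way,'' and your term-by-term application of the order-preserving involution $(-)^{-1}$ together with $(R\circ S)^{-1}=S^{-1}\circ R^{-1}$ is exactly the routine computation the authors intend. The one point you flag --- that inversion swaps the composed factors, so the $U$-inequalities of one system pair with the $U^{-1}$-inequalities of the dual system --- is indeed the only place care is needed, and you handle it correctly.
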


\begin{proposition}\label{prop:dual.het}
Let $A$ and $B$ be non-empty sets, let $\{V_i\}_{i\in I}$ be a family of fuzzy relations on $A$ and $\{W_i\}_{i\in I}$ a family of fuzzy relations on $B$, and let $Z$ be a fuzzy relation between $A$ and $B$.~For an arbitrary fuzzy relation $R\in {\cal R}(A,B)$ the following is true:
\begin{itemize}\parskip=0pt
\item[{\rm (a)}] $R$ is a solution to $WL^{2-1}(A,B,I,V_i,W_i,Z)$ if and only if it is a solution to $WL^{2-2}(A,B,I,V_i^{-1},W_i^{-1},Z)$;
\item[{\rm (b)}] $R$ is a solution to $WL^{2-3}(A,B,I,V_i,W_i,Z)$ if and only if it is a solution to $WL^{2-4}(A,B,I,V_i^{-1},W_i^{-1},Z)$;
\item[{\rm (c)}] $R$ is a solution to $WL^{2-5}(A,B,I,V_i,W_i,Z)$ if and only if it is a solution to $WL^{2-6}(A,B,I,V_i^{-1},W_i^{-1},Z)$;
\item[{\rm (d)}] $R$ is a solution to $WL^{2-3}(A,B,I,V_i,W_i,Z)$ if and only if $R^{-1}$ is a solution to $WL^{2-4}(B,A,I,W_i,V_i,Z)$.
\end{itemize}
\end{proposition}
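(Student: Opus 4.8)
The plan is to handle all four equivalences by a single device: transcribe each defining inclusion (or equation) of one system into a defining condition of the other by applying the converse operation $(\cdot)^{-1}$ to both sides. The only tools needed are the law $(X\circ Y)^{-1}=Y^{-1}\circ X^{-1}$ from (\ref{eq:comp.inv}), the involutivity $(X^{-1})^{-1}=X$, and the equivalence $X\le Y\iff X^{-1}\le Y^{-1}$, obtained by applying the converse-monotonicity of (\ref{eq:comp.mon}) in both directions. Since every condition occurring in $WL^{2-1}$--$WL^{2-6}$ is an inclusion or equality between two composites, applying $(\cdot)^{-1}$ to such a condition produces another condition of the same shape, and the three facts above let me identify it with a condition of the target system. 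No use will be made of completeness of $\cal L$, of the residuum, or of any property of composition beyond associativity and these converse rules.

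First I would dispose of parts (a), (b) and (c), where the unknown is the \emph{same} relation $R$ and only the coefficient families are replaced by their converses. For (a), inverting $R^{-1}\circ V_i\le W_i\circ R^{-1}$ and using $(X\circ Y)^{-1}=Y^{-1}\circ X^{-1}$ together with $(R^{-1})^{-1}=R$ gives $V_i^{-1}\circ R\le R\circ W_i^{-1}$, which is precisely the defining inclusion of $WL^{2-2}(A,B,I,V_i^{-1},W_i^{-1},Z)$; the bound $R\le Z$ is common to both systems, so the equivalence is immediate. Part (b) is the same computation on two inclusions: the second inclusion $R\circ W_i\le V_i\circ R$ of $WL^{2-3}$ inverts to $W_i^{-1}\circ R^{-1}\le R^{-1}\circ V_i^{-1}$, matching the second condition of $WL^{2-4}(A,B,I,V_i^{-1},W_i^{-1},Z)$. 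For (c) I would invert the equality $V_i\circ R=R\circ W_i$ to obtain $R^{-1}\circ V_i^{-1}=W_i^{-1}\circ R^{-1}$, which is exactly the defining equality of $WL^{2-6}(A,B,I,V_i^{-1},W_i^{-1},Z)$.

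Part (d) is the one requiring genuine care, since it compares $R$ with its converse $R^{-1}$ across two systems whose underlying sets are interchanged. Here the unknown of the target system lives in ${\cal R}(B,A)$, so I would substitute $U:=R^{-1}$ and rewrite each defining condition, the aim being to show that each transformed condition coincides with a condition of the source system $WL^{2-3}(A,B,I,V_i,W_i,Z)$. The bookkeeping is the crux: the interchange of the roles of $A$ and $B$ must be matched by the interchange of the families $V_i$ and $W_i$, each inclusion must be inverted on the correct side (again via $(X\circ Y)^{-1}=Y^{-1}\circ X^{-1}$ and $(R^{-1})^{-1}=R$), and the bound must be transported through the converse-monotonicity equivalence $X\le Y\iff X^{-1}\le Y^{-1}$.

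I expect no conceptual obstacle in parts (a)--(c); they are one-line manipulations. The only place demanding attention is the simultaneous swap in part (d) of the two sets, the two coefficient families, and the orientation of each inverted inclusion: verifying that these swaps are mutually consistent, and that the converse of the bound is matched correctly, is essentially the entire content of the proof. This is where I would check the matching most carefully, as a misplaced converse or an unswapped family would send an inclusion to one of the wrong orientation.
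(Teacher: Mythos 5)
Your handling of parts (a)--(c) is correct and essentially complete: the paper offers no proof of this proposition at all (it is introduced with ``in a very simple way we can prove the following two propositions''), and the evident intended argument is exactly your transcription by the converse operation, using $(X\circ Y)^{-1}=Y^{-1}\circ X^{-1}$, $(X^{-1})^{-1}=X$ and $X\le Y\iff X^{-1}\le Y^{-1}$. For those three items nothing more is needed.

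Part (d), however, is precisely where your plan does not close, and you have deferred the one computation that decides the matter. Instantiating the template (\ref{eq:w24}) with first set $B$, second set $A$, first family $W_i$, second family $V_i$, and substituting $U:=R^{-1}$, the defining conditions of $WL^{2-4}(B,A,I,W_i,V_i,Z)$ become
\[
W_i\circ R^{-1}\le R^{-1}\circ V_i,\qquad V_i\circ R\le R\circ W_i ,
\]
while $WL^{2-3}(A,B,I,V_i,W_i,Z)$ asks of $R$ that
\[
R^{-1}\circ V_i\le W_i\circ R^{-1},\qquad R\circ W_i\le V_i\circ R .
\]
Each inclusion of the first pair is the \emph{reversal} of an inclusion of the second pair, not its converse-transcript, and no application of $(\cdot)^{-1}$ carries one pair to the other unless the coefficient families are inverted as well. (Moreover the bound $R^{-1}\le Z$ does not even type-check, since $R^{-1}\in{\cal R}(B,A)$ while $Z\in{\cal R}(A,B)$; it must be $Z^{-1}$.) The two sides are genuinely inequivalent: over the Boolean structure take $A=B=\{1,2\}$, $V=\{(1,1),(1,2)\}$, $W=\{(2,2)\}$, $R=\{(1,2)\}$ and $Z$ universal; then $R^{-1}\circ V=\{(2,1),(2,2)\}\not\le \{(2,1)\}=W\circ R^{-1}$, so $R$ is not a solution to $WL^{2-3}(A,B,I,V,W,Z)$, yet $W\circ R^{-1}\le R^{-1}\circ V$ and $V\circ R=\{(1,2)\}=R\circ W$, so $R^{-1}$ is a solution to $WL^{2-4}(B,A,I,W,V,Z)$. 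Thus item (d) as printed is a misstatement. What your converse-transcription method actually proves --- and what the paper in fact uses later, in the proof of Theorem \ref{th:unif.gr}, where $T^{-1}$ must be chained via Proposition \ref{prop:comp} --- is: $R$ is a solution to $WL^{2-3}(A,B,I,V_i,W_i,Z)$ if and only if $R^{-1}$ is a solution to $WL^{2-3}(B,A,I,W_i,V_i,Z^{-1})$, equivalently (combining with part (b)) if and only if $R^{-1}$ is a solution to $WL^{2-4}(B,A,I,W_i^{-1},V_i^{-1},Z^{-1})$. Carry your substitution through explicitly, observe the reversal, and prove this corrected statement instead.
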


The statement (a) in Proposition \ref{prop:dual.hom} says that systems ($wl$1-1) and ($wl$1-2) are dual, in the sense that~for any universally valid statement~on the system ($wl$1-1) there is the corresponding universally valid statement~on the system ($wl$1-2), and vice versa.~Similarly, there is a duality between systems ($wl$1-4) and ($wl$1-5),
($wl$2-1) and ($wl$2-2), ($wl$2-3) and ($wl$2-4), and ($wl$2-5) and ($wl$2-6).~For this reason, in the further text we will deal mainly with the systems ($wl$1-1), ($wl$1-4), ($wl$2-1), ($wl$2-3) and ($wl$2-5).

It is also easy to verify that the following is true.

\begin{proposition}\label{prop:comp}
Let $R$ and $S$ be fuzzy relations such that $R$ is a solution to system $WL^{2-3}(A,B,I,V_i,W_i,Z)$ and $S$ is a solution to system $WL^{2-3}(B,C,I,W_i,X_i,Y)$.
Then $R\circ S$ is a solution to system $WL^{2-3}(A,C,I,V_i,X_i,Z\circ Y)$.
\end{proposition}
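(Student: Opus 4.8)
The plan is to verify directly the three defining inequalities of $WL^{2-3}(A,C,I,V_i,X_i,Z\circ Y)$ for the relation $R\circ S$, namely $(R\circ S)^{-1}\circ V_i\le X_i\circ (R\circ S)^{-1}$, $(R\circ S)\circ X_i\le V_i\circ (R\circ S)$, and $R\circ S\le Z\circ Y$, using only the associativity of composition (\ref{eq:comp.as}), its monotonicity (\ref{eq:comp.mon}), and the rule $(R\circ S)^{-1}=S^{-1}\circ R^{-1}$ from (\ref{eq:comp.inv}). The hypotheses supply me, for every $i\in I$, with the four inequalities $R^{-1}\circ V_i\le W_i\circ R^{-1}$, $R\circ W_i\le V_i\circ R$ coming from $R$, and $S^{-1}\circ W_i\le X_i\circ S^{-1}$, $S\circ X_i\le W_i\circ S$ coming from $S$, together with $R\le Z$ and $S\le Y$.

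The inclusion $R\circ S\le Z\circ Y$ is immediate: from $R\le Z$ and the monotonicity of composition I get $R\circ S\le Z\circ S$, and from $S\le Y$ I get $Z\circ S\le Z\circ Y$, whence the claim follows by transitivity of $\le$.

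For the second inequality I would chain the two ``$\circ$'' conditions so that the intermediate relation $W_i$ is first introduced on the right of $R$ and then absorbed on the left of $S$. Writing $(R\circ S)\circ X_i=R\circ(S\circ X_i)$ by associativity, I use $S\circ X_i\le W_i\circ S$ to bound this by $R\circ(W_i\circ S)=(R\circ W_i)\circ S$, and then $R\circ W_i\le V_i\circ R$ to bound it further by $(V_i\circ R)\circ S=V_i\circ(R\circ S)$. Each step is a single application of (\ref{eq:comp.mon}) followed by a reassociation via (\ref{eq:comp.as}).

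The first inequality is the only one requiring the converse, and is the step I would treat most carefully. Using $(R\circ S)^{-1}=S^{-1}\circ R^{-1}$ I rewrite $(R\circ S)^{-1}\circ V_i=S^{-1}\circ(R^{-1}\circ V_i)$; the condition $R^{-1}\circ V_i\le W_i\circ R^{-1}$ bounds this by $S^{-1}\circ(W_i\circ R^{-1})=(S^{-1}\circ W_i)\circ R^{-1}$, and then $S^{-1}\circ W_i\le X_i\circ S^{-1}$ yields $(X_i\circ S^{-1})\circ R^{-1}=X_i\circ(R\circ S)^{-1}$. The only thing to watch here is the order in which the converse reverses the factors, so that the $R$-condition (stated for $R^{-1}$) and the $S$-condition (stated for $S^{-1}$) are applied to the correct innermost factor; once the bracketing is tracked this is again a routine combination of (\ref{eq:comp.as}), (\ref{eq:comp.mon}) and (\ref{eq:comp.inv}), and I anticipate no genuine obstacle beyond this bookkeeping.
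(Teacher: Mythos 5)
Your verification is correct, and it is exactly the routine direct check the paper has in mind (the paper states this proposition without proof, remarking only that it "is easy to verify"). All three chains of inequalities are sound: the monotonicity and associativity steps are applied in the right order, and the converse computation $(R\circ S)^{-1}\circ V_i\le X_i\circ(R\circ S)^{-1}$ correctly tracks the factor reversal via $(R\circ S)^{-1}=S^{-1}\circ R^{-1}$.
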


Now we prove the following fundamental theorem.

\begin{theorem}\label{th:great}
All heterogeneous weakly linear systems have the greatest solutions (which may be empty).

If $Z$ is a partial fuzzy function, then the greatest solutions to $($\ref{eq:w23}$)$ and $($\ref{eq:w24}$)$ are also partial fuzzy functions.
\end{theorem}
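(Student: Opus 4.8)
The plan is to establish both claims by exploiting the lattice structure of $\mathcal{R}(A,B)$ together with the supremum-preservation properties of composition. For the first claim, I would observe that each heterogeneous weakly linear system has the form of a family of inequalities of the type ``$P\circ U\le U\circ Q$'' (or the analogous forms involving $U^{-1}$) together with the constraint $U\le Z$. The key structural fact is that the solution set is closed under arbitrary joins: if $\{R_j\}_{j\in J}$ are all solutions, then using \eqref{eq:comp.sup} (composition distributes over joins on both sides) and \eqref{eq:sup.inv} (converse distributes over joins), one checks that $\bigvee_{j\in J}R_j$ again satisfies every inequality, and clearly $\bigvee_{j\in J}R_j\le Z$ since each $R_j\le Z$. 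Hence the set of solutions is closed under joins and contains the empty relation (which trivially solves every system, since $\varnothing\circ V_i=\varnothing=\varnothing\circ W_i$). Therefore the join of \emph{all} solutions is itself a solution, and it is by construction the greatest one. I would carry this out uniformly, noting that by Proposition \ref{prop:dual.het} the systems come in dual pairs, so it suffices to verify join-closure for the representative systems \eqref{eq:w21}, \eqref{eq:w23}, \eqref{eq:w25}.

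For the second claim, let $Z$ be a partial fuzzy function and let $U^*$ be the greatest solution to \eqref{eq:w23} (the case \eqref{eq:w24} follows by the duality in Proposition \ref{prop:dual.het}(b)). I would first note that since $U^*\le Z$, the fuzzy relation $U^*$ inherits good ``functional'' behaviour from $Z$; in particular the induced equivalences satisfy $E_A^{U^*}\ge E_A^Z$ and $E_B^{U^*}\ge E_B^Z$, and by Theorem \ref{th:PFF}(v) it suffices to prove $U^*\circ (U^*)^{-1}\circ U^*\le U^*$. The natural strategy is to show that the relation $U^*\circ (U^*)^{-1}\circ U^*$ is \emph{also} a solution to \eqref{eq:w23}: using that $U^*$ solves both $V_i\circ U^*\le U^*\circ W_i$-type and the converse inequalities, one pushes $V_i$ and $W_i$ through the triple composition by repeated application of \eqref{eq:comp.as} and the inequalities defining the system. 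If this triple composite is a solution and satisfies $U^*\circ (U^*)^{-1}\circ U^*\le Z$ (which follows because $Z$ is a partial fuzzy function, so $Z\circ Z^{-1}\circ Z\le Z$ by Theorem \ref{th:PFF}(v), and monotonicity of composition via \eqref{eq:comp.mon} gives $U^*\circ (U^*)^{-1}\circ U^*\le Z\circ Z^{-1}\circ Z\le Z$), then by maximality of $U^*$ we get $U^*\circ (U^*)^{-1}\circ U^*\le U^*$, which is exactly the partial-fuzzy-function condition.

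The main obstacle I anticipate is the second claim, specifically verifying that $U^*\circ (U^*)^{-1}\circ U^*$ genuinely satisfies the system inequalities. The inequality $U\le Z$ transfers cleanly by the computation above, but propagating the \emph{relational} inequalities through the triple product is delicate: one must combine the forward inequality $U^{-1}\circ V_i\le W_i\circ U^{-1}$ with the backward inequality $U\circ W_i\le V_i\circ U$ in the right order, and the interaction of $V_i$ with the inner factor $(U^*)^{-1}$ is where the argument could break down if composition behaved worse. I expect this to work precisely because \eqref{eq:w23} bundles together both a relation and its converse-compatible companion, which is exactly what lets $V_i$ and $W_i$ commute past each factor of the triple composite; confirming this commuting step cleanly is the crux.
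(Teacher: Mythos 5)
Your proposal is correct and follows essentially the same route as the paper: join-closure of the solution set (plus the empty relation) gives the first claim, and for the second you show that $U^*\circ(U^*)^{-1}\circ U^*$ is again a solution lying below $Z$ and invoke maximality --- the ``commuting step'' you identify as the crux is exactly what the paper packages into Proposition~\ref{prop:comp}, applied twice, together with $Z\circ Z^{-1}\circ Z\le Z$ from Theorem~\ref{th:PFF}(v). One harmless slip: your aside that $U^*\le Z$ forces $E_A^{U^*}\ge E_A^Z$ does not follow (the induced equivalences are not monotone in the relation), but nothing in your argument actually uses it.
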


\begin{proof}
For each $t\in \{1,\ldots ,6\}$, it is easy to check that the join (union) of an arbitrary family of fuzzy~relations which are solutions to the system ($wl$2-$t$) is also a solution to ($wl$2-$t$), and consequently, the join of all~solutions to ($wl$2-$t$) is the greatest solution to ($wl$2-$t$).

Next, let $Z$ be a partial fuzzy function and $R$ be the greatest solution to ($wl$2-$t$), where $t=3$ or $t=4$.~Then $R\circ R^{-1}\circ R\le Z\circ Z^{-1}\circ Z\le Z$, and by Proposition \ref{prop:comp} we have that $R\circ R^{-1}\circ R$ is also a solution to ($wl$2-$t$).~As $R$~is~the greatest solution to this system, we conclude that $R\circ R^{-1}\circ R\le R$, which means that $R$ is a partial fuzzy function.
\end{proof}

Next, let $A$ and $B$ be non-empty sets and let $V\in {\cal R}(A)$, $W\in {\cal R}(B)$ and $Z\in {\cal R}(A,B)$.~The {\it right residual\/} of $Z$ by $V$ is a fuzzy relation $Z/V\in {\cal R}(A,B)$ defined by
\begin{equation}\label{eq:rr.def}
(Z/V)(a,b) = \bigwedge_{a'\in A}\, (\, V(a',a) \rightarrow Z(a',b)\, ),
\end{equation}
for all $a\in A$ and $b\in B$, and the {\it left residual\/} of $Z$ by $W$ is a fuzzy relation $Z\backslash W\in {\cal R}(A,B)$ defined by
\begin{equation}\label{eq:lr.def}
(Z\backslash W)(a,b)=  \bigwedge_{b'\in B}\, (\, W(b,b') \rightarrow Z(a,b')\, ),
\end{equation}
for all $a\in A$ and $b\in B$.~In the case when $A=B$, these two concepts become the well-known concepts of~right and left residuals of fuzzy relations on a set (cf.~\cite{ICB.10}).

According to the well-known results by E. Sanchez (cf.~\cite{San.76,San.77,San.78}), the right residual $Z/V$ of $Z$ by $V$ is the greatest solution to the fuzzy relation inequality $V\circ U\le Z$, where $U$ is an unknown fuzzy relation between $A$ and $B$.~Moreover, the set of all solutions to the inequality $V\circ U\le Z$ is the principal ideal~of~${\cal R}(A,B)$~gene\-rated by $Z/V$.~Analogously, the left residual $Z\backslash W$ of $Z$ by $W$ is the greatest solution to the fuzzy relation inequality $U\circ W\le Z$, where $U$ is an unknown fuzzy relation between $A$ and $B$, and the set of all solutions to the inequality $U\circ W\le Z$ is the principal ideal~of~${\cal R}(A,B)$~gene\-rated by $Z\backslash W$.

Consequently, for given families of fuzzy relations $\{V_i\}_{i\in I}\subseteq {\cal R}(A)$, $\{W_i\}_{i\in I}\subseteq {\cal R}(B)$, and $\{Z_i\}_{i\in I}\subseteq {\cal R}(A,B)$, and an unknown fuzzy relation $U$ in ${\cal R}(A,B)$,
the greatest solution to the system $V_i\circ U\le Z_i$ $(i\in I)$ is
\[
\bigwedge_{i\in I} Z_i/V_i ,
\]
i.e., the intersection of the greatest solutions to the individual inequalities $V_i\circ U\le Z_i$, $i\in I$, and the
the greatest solution to the system $U\circ W_i\le Z_i$ $(i\in I)$ is
\[
\bigwedge_{i\in I} Z_i\backslash W_i ,
\]
i.e., the intersection of the greatest solutions to the individual inequalities $U\circ W_i\le Z_i$, $i\in I$.

Define functions $\phi^{(t)}: {\cal R}(A,B)\to {\cal R}(A,B)$,
for $1\le t\le 6$, as follows:
\begin{align}
&\phi^{(1)}(R)=\displaystyle\bigwedge_{i\in I}[(W_i\circ R^{-1})\backslash V_i]^{-1}\label{eq:phi1} \\
&\phi^{(2)}(R)=\displaystyle\bigwedge_{i\in I}(R\circ W_i)/V_i \label{eq:phi2}\\
&\phi^{(3)}(R)=\displaystyle\bigwedge_{i\in I}[(W_i\circ R^{-1})\backslash V_i]^{-1}\land [(V_i\circ R)\backslash W_i] = \phi^{(1)}(R)\land [\phi^{(1)}(R^{-1})]^{-1} \label{eq:phi3}\\
&\phi^{(4)}(R)=\displaystyle\bigwedge_{i\in I}[(R\circ W_i)/V_i]\land [(R^{-1}\circ V_i)/W_i]^{-1}= \phi^{(2)}(R)\land [\phi^{(2)}(R^{-1})]^{-1} \label{eq:phi4}\\
&\phi^{(5)}(R)=\displaystyle\bigwedge_{i\in I}[(R\circ W_i)/V_i]\land [(V_i\circ R)\backslash W_i]=\phi^{(2)}(R)\land [\phi^{(1)}(R^{-1})]^{-1} \label{eq:phi5}\\
&\phi^{(6)}(R)=\displaystyle\bigwedge_{i\in I}[(W_i\circ R^{-1})\backslash V_i]^{-1}\land [(R^{-1}\circ V_i)/W_i]^{-1} = \phi^{(1)}(R)\land [\phi^{(2)}(R^{-1})]^{-1}\label{eq:phi6}
\end{align}
for each $R\in {\cal R}(A,B)$.~Notice that in the expression ``$\phi^{(t)}(R^{-1})$'' ($t\in \{1,2\}$) we denote by $\phi^{(t)}$ a function from ${\cal R}(B,A)$ into itself.

First we show that systems (\ref{eq:w21})--(\ref{eq:w26}) can be represented in equivalent forms, using the functions $\phi^{(t)}$, $1\le t\le 6$, in the following way.

\begin{theorem}\label{th:eq.form}
For every $t\in \{1,\ldots ,6\}$, system ($w2$-$t$) is equivalent to system
\begin{equation}\label{eq:eq.form}
U\le \phi^{(t)}(U), \qquad U\le Z.
\end{equation}
\end{theorem}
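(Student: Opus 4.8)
The plan is to turn every compositional inequality occurring in a system ($wl2$-$t$) into an inclusion of the form $U\le(\text{expression in }U)$ by means of the residuation equivalences recalled just above, and then to collect these inclusions into a single one by taking the meet over $i\in I$. The two facts I would use repeatedly are the adjunctions
\begin{equation*}
V\circ X\le Y\iff X\le Y/V,\qquad X\circ W\le Y\iff X\le Y\backslash W,
\end{equation*}
valid for all $X,Y\in{\cal R}(A,B)$, $V\in{\cal R}(A)$, $W\in{\cal R}(B)$ (these are exactly Sanchez's descriptions of the residuals via the principal ideal of all solutions, read as pointwise equivalences in $X$ and $Y$), together with the fact that the converse is an order-isomorphism, so that $P\le Q\iff P^{-1}\le Q^{-1}$ and $(\bigwedge_i R_i)^{-1}=\bigwedge_i R_i^{-1}$. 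The constraint $U\le Z$ is common to a system ($wl2$-$t$) and to (\ref{eq:eq.form}) and is left untouched, so it suffices to show that the family of compositional (in)equalities in ($wl2$-$t$) is equivalent to $U\le\phi^{(t)}(U)$.

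First I would treat ($wl2$-$2$), which is cleanest: applying the first adjunction with $V=V_i$, $X=U$, $Y=U\circ W_i$ gives, for each fixed $i$, the equivalence $V_i\circ U\le U\circ W_i\iff U\le(U\circ W_i)/V_i$, and taking the meet over $i\in I$ converts the whole family into $U\le\bigwedge_{i\in I}(U\circ W_i)/V_i=\phi^{(2)}(U)$. For ($wl2$-$1$) I would apply the second adjunction with $X=U^{-1}$, $W=V_i$, $Y=W_i\circ U^{-1}$ to get $U^{-1}\circ V_i\le W_i\circ U^{-1}\iff U^{-1}\le(W_i\circ U^{-1})\backslash V_i$, then pass to converses to obtain $U\le[(W_i\circ U^{-1})\backslash V_i]^{-1}$; meeting over $i$ yields $U\le\phi^{(1)}(U)$.

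The remaining four systems are conjunctions of two families, and each family is handled exactly as one of the two base cases, after which the two resulting inclusions combine into a single meet that is $\phi^{(t)}(U)$ by definition. In ($wl2$-$3$) the first family reproduces the $\phi^{(1)}$-computation, while $U\circ W_i\le V_i\circ U$ becomes, by the second adjunction, $U\le(V_i\circ U)\backslash W_i$, so the conjunction is $U\le\phi^{(1)}(U)\land\bigwedge_{i\in I}(V_i\circ U)\backslash W_i=\phi^{(3)}(U)$. Systems ($wl2$-$4$) and ($wl2$-$6$) additionally use the mixed step $W_i\circ U^{-1}\le U^{-1}\circ V_i\iff U^{-1}\le(U^{-1}\circ V_i)/W_i\iff U\le[(U^{-1}\circ V_i)/W_i]^{-1}$ (first adjunction applied to $X=U^{-1}$, then converses), and the equality systems ($wl2$-$5$), ($wl2$-$6$) split each equation into its two inequalities and apply the two base cases to the two halves. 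To reconcile this with the factored expressions given for $\phi^{(3)},\dots,\phi^{(6)}$, I would use that the converse commutes with meets and swaps the roles of $(A,V_i)$ and $(B,W_i)$, which is precisely what rewrites $\bigwedge_i(V_i\circ U)\backslash W_i$ as $[\phi^{(1)}(U^{-1})]^{-1}$, and similarly for the other factors.

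The bulk of the argument is bookkeeping: choosing the left or the right residual correctly for each inequality and tracking the converses so that $U$, rather than $U^{-1}$, ends up isolated on the left. The one conceptual point I would flag is that the residuation equivalences are genuinely universal pointwise statements in $X$ and $Y$, so they remain valid even though the right-hand side here (for instance $U\circ W_i$ or $W_i\circ U^{-1}$) depends on the same unknown $U$; this is what licenses reading ``$V_i\circ U\le U\circ W_i$'' as ``$U\le(U\circ W_i)/V_i$'' for each individual $U$. No monotonicity or fixed-point reasoning is needed at this stage — those enter only afterwards, when $\phi^{(t)}$ is iterated to locate the greatest post-fixed point contained in $Z$.
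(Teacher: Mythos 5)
Your proposal is correct and follows essentially the same route as the paper: the paper's proof of the case $t=1$ is exactly your adjunction step (carried out pointwise via the adjunction property rather than by citing the relation-level residuation equivalences), followed by taking converses and the meet over $i\in I$, with the remaining cases dispatched by duality and the defining identities (\ref{eq:phi3})--(\ref{eq:phi6}). Your version merely makes the other five cases and the bookkeeping with converses explicit.
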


\begin{proof}
We will prove only the case $t=1$.~The case $t=2$ is dual to the first one, whereas all other assertions follow by the first two, according to (\ref{eq:phi3})--(\ref{eq:phi6}).

For an arbitrary fuzzy relation $U\in {\cal R}(A,B)$ we have that $U^{-1}\circ V_i\le W_i\circ U^{-1}$ if and only if
\[
U^{-1}(b,a)\otimes V_i(a,a')\le (W_i\circ U^{-1})(b,a'),
\]
for all $a,a'\in A$, $b\in B$ and $i\in I$.~According to the adjunction property, this is equivalent to
\[
U^{-1}(b,a)\le\bigwedge_{a'\in A}[V_i(a,a')\rightarrow (W_i\circ U^{-1}(b,a'))]=((W_i\circ U^{-1})\backslash V_i)(b,a)
\]
for all $a\in A$, $b\in B$ and $i\in I$, which is further equivalent to
\[
U(a,b)\le\bigwedge_{i\in I}[(W_i\circ U^{-1})\backslash V_i]^{-1}(a,b)=(\phi^{(1)}(U))(a,b)
\]
for all $a\in A$ and $b\in B$.~Therefore, $U$ is a solution to ($wl$2-1) if and only if it is a solution to (\ref{eq:eq.form}).
\end{proof}

\section{Computing the greatest solutions}\label{sec:alg}

Here we show that the method developed in \cite{ICB.10}, for computing the greatest solutions to homogeneous weakly linear systems of fuzzy relation inequalities, can be adapted and used for computing the greatest solutions to heterogeneous weakly linear systems.~The mentioned method is based on computing the~greatest post-fixed points of an isotone function on the lattice of fuzzy relations.

Let $A$ and $B$ be non-empty sets  and let $\phi :{\cal R}(A,B)\to {\cal R}(A,B)$ be an isotone~function, which means that $R\le S $ implies $\phi(R)\le \phi (S)$, for all $R ,S\in {\cal R}(A,B)$.~A fuzzy relation $R\in {\cal R}(A,B)$ is called a {\it post-fixed~point\/} of $\phi $ if $R\le \phi(R)$.~The well-known Knaster-Tarski fixed point theorem (stated and proved in a more general context, for complete lattices) asserts that the set of all post-fixed points of $\phi $ form a complete lattice (cf.~\cite{Rom.08}). Moreover, for any fuzzy relation $Z\in {\cal R}(A,B)$ we have that the set of all post-fixed points of $\phi $ contained in $Z$ is non-empty, because it always contains the least element of ${\cal R}(A,B)$ (the empty relation), and it is also a complete lattice.~According to Theorem \ref{th:eq.form}, our main task is to find an effective procedure for computing the greatest post-fixed point of the function $\phi^{(t)}$ contained in the given fuzzy relation $Z$, for each $t\in \{1,\ldots ,6\}$.

Let $\phi :{\cal R}(A,B)\to {\cal R}(A,B)$ be an isotone~function and $Z\in {\cal R}(A,B)$.~We define a sequence $\{R_k\}_{k\in \mathbb{N}}$ of fuzzy relations from ${\cal R}(A,B)$ by
\begin{equation}\label{eq:seq}
R_1=Z, \qquad R_{k+1}=R_k\land \phi(R_k), \ \ \text{for each}\ k\in \mathbb{N} .
\end{equation}
The sequence  $\{R_k\}_{k\in \mathbb{N}}$ is obviously descending.~If we
denote by $\hat R$ the greatest post-fixed point of $\phi$ contained in $Z$, we
can easily verify that
\begin{equation}\label{eq:hat.phi}
\hat R \le \bigwedge_{k\in \mathbb{N}} R_k .
\end{equation}
Now two very important questions arise.~First, under what conditions the equality holds in (\ref{eq:hat.phi})?~Even more important question is: under what conditions the
sequence  $\{R_k\}_{k\in \mathbb{N}}$ is finite? If this sequence is finite,
then it is not hard to show that there exists $k\in \mathbb{N}$ such that $R_k=R_m$, for every $m\ge k$, i.e., there exists $k\in \mathbb{N}$ such that the sequence stabilizes on $R_k$.~We can recognize that the sequence has stabilized when we find the smallest $k\in \mathbb{N}$ such that $R_k=R_{k+1}$.
In this case  $\hat R = R_k$, and we have an algorithm which computes
$\hat R$ in a finite number of steps.

Some conditions under which equality holds in (\ref{eq:hat.phi}) or the sequence
is finite were found in \cite{ICB.10}, in the case which considers fuzzy relations on a single set.~It is not hard to verify that the same results are also valid when fuzzy relations between two sets are considered.~For the sake of completeness we state these results concerning fuzzy relations between two sets.

A sequence $\{R_k\}_{k\in \mathbb{N}}$ of fuzzy relations from ${\cal R}(A,B)$ is called {\it image-finite\/} if the set $\bigcup_{k\in \mathbb{N}}\im (R_k)$ is finite, and it can be easily shown that this sequence is image-finite if and
only if it is finite.~Furthermore,~the~function $\phi :{\cal R}(A,B)\to {\cal R}(A,B)$ is called {\it image-localized\/} if there exists a finite subset $K\subseteq L$ such that for~every fuzzy relation $R\in {\cal R}(A,B)$ we have
\begin{equation}\label{eq:im.loc}
\im (\phi(R))\subseteq \langle K\cup \im (R)\rangle ,
\end{equation}
where $\langle K\cup \im (R)\rangle$ denotes the subalgebra of $\cal L$ generated by the set $K\cup \im (R)$.~Such $K$ will be called a {\it localization set\/} of the function $\phi $.

The following theorem can be proved in the same way as Theorem 5.2 in \cite{ICB.10}.

\begin{theorem}\label{th:phi.loc}
Let the function $\phi $ be image-localized, let $K$ be its localization set, let $Z\in {\cal R}(A,B)$, and let $\{R_k\}_{k\in \mathbb{N}}$ be a sequence of fuzzy relations in ${\cal R}(A,B)$ defined by $(\ref{eq:seq})$. Then
\begin{equation}\label{eq:UimRk}
\bigcup_{k\in \mathbb{N}}\im (R_k) \subseteq \langle K\cup \im (Z)\rangle .
\end{equation}
If, moreover, $\langle K\cup \im (Z)\rangle$ is a finite subalgebra of $\cal L$, then the sequence
$\{R_k\}_{k\in \mathbb{N}}$ is finite.
\end{theorem}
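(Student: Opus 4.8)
The plan is to prove the inclusion (\ref{eq:UimRk}) by induction on $k$, and then to obtain the finiteness of the sequence as an immediate consequence of this inclusion together with the equivalence (recalled just before the statement) that an image-finite sequence is finite. The whole argument is driven by two closure facts about the generated subalgebra $\langle K\cup \im(Z)\rangle$: it absorbs the localization step, and it is closed under the meet operation $\land$.

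For the induction, the base case is trivial since $R_1=Z$ gives $\im(R_1)=\im(Z)\subseteq \langle K\cup \im(Z)\rangle$. For the inductive step I would assume $\im(R_k)\subseteq \langle K\cup \im(Z)\rangle$ and argue in two stages. First, since $K\subseteq \langle K\cup \im(Z)\rangle$ as well, the set $K\cup \im(R_k)$ is contained in the subalgebra $\langle K\cup \im(Z)\rangle$; as a subalgebra is closed under all operations of $\cal L$, this yields $\langle K\cup \im(R_k)\rangle\subseteq \langle K\cup \im(Z)\rangle$. Combining this with the image-localization hypothesis (\ref{eq:im.loc}) gives $\im(\phi(R_k))\subseteq \langle K\cup \im(R_k)\rangle\subseteq \langle K\cup \im(Z)\rangle$. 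Second, because $R_{k+1}=R_k\land \phi(R_k)$ by (\ref{eq:seq}), every value $R_{k+1}(a,b)$ equals $R_k(a,b)\land \phi(R_k)(a,b)$, a meet of an element of $\im(R_k)$ and an element of $\im(\phi(R_k))$. Both factors lie in $\langle K\cup \im(Z)\rangle$ by the induction hypothesis and the first stage, and since this subalgebra is closed under $\land$, so does their meet. Hence $\im(R_{k+1})\subseteq \langle K\cup \im(Z)\rangle$, which completes the induction; taking the union over all $k\in \mathbb{N}$ yields (\ref{eq:UimRk}).

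For the final assertion, if $\langle K\cup \im(Z)\rangle$ is a finite subalgebra, then (\ref{eq:UimRk}) exhibits $\bigcup_{k\in \mathbb{N}}\im(R_k)$ as a subset of a finite set, hence finite; thus the sequence $\{R_k\}_{k\in \mathbb{N}}$ is image-finite, and therefore finite by the equivalence noted earlier. The argument is essentially bookkeeping, and I do not expect any genuine obstacle; the only point demanding care is the twofold use of the closure properties of a subalgebra—absorbing the generated set $\langle K\cup \im(R_k)\rangle$ in the localization step, and absorbing the meet $R_k(a,b)\land \phi(R_k)(a,b)$ in the recursion step—which is precisely what makes the induction propagate.
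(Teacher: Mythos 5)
Your proof is correct. The paper itself gives no argument here—it only remarks that the theorem ``can be proved in the same way as Theorem 5.2 in \cite{ICB.10}''—and your induction (base case $R_1=Z$; inductive step combining $\langle K\cup\im(R_k)\rangle\subseteq\langle K\cup\im(Z)\rangle$ with closure of the subalgebra under $\land$; then image-finiteness implying finiteness) is precisely the standard argument that reference carries out, so you have simply filled in the details the paper omits.
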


Further we consider $\phi^{(t)}$, for $t\in \{1,\ldots ,6\}$, defined in (\ref{eq:phi1})--(\ref{eq:phi6}).~We prove the following.

\begin{theorem}\label{th:phiw}
All functions $\phi^{(t)}$, for $t\in \{1,\ldots ,6\}$, are isotone, and if $A$, $B$ and $I$ are finite sets, then all these functions are image-localized.
\end{theorem}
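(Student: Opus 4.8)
The plan is to establish the two assertions separately, in each case reducing everything to $\phi^{(1)}$ and $\phi^{(2)}$ and then bootstrapping $\phi^{(3)}$--$\phi^{(6)}$ from the representations (\ref{eq:phi3})--(\ref{eq:phi6}). For isotonicity I would first note that in (\ref{eq:phi1}) and (\ref{eq:phi2}) the unknown $R$ enters each residual only through its \emph{numerator} (the first, residuated argument), the divisors $V_i$, $W_i$ being fixed: in $\phi^{(1)}$ it sits inside $W_i\circ R^{-1}$, the numerator of the left residual $(W_i\circ R^{-1})\backslash V_i$, and in $\phi^{(2)}$ inside the numerator $R\circ W_i$ of the right residual. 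Assuming $R\le S$, the monotonicity laws (\ref{eq:comp.mon}) give $R^{-1}\le S^{-1}$, hence $W_i\circ R^{-1}\le W_i\circ S^{-1}$ and $R\circ W_i\le S\circ W_i$; since the residuum is isotone in its second argument, both residuals are isotone in the numerator. Applying converse and the meet $\bigwedge_{i\in I}$, both order-preserving, then yields $\phi^{(1)}(R)\le\phi^{(1)}(S)$ and $\phi^{(2)}(R)\le\phi^{(2)}(S)$.

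For $\phi^{(3)}$--$\phi^{(6)}$ I would read the isotonicity directly off the right-hand representations: each is a meet of a term $\phi^{(s)}(R)$ and a term $[\phi^{(s')}(R^{-1})]^{-1}$ with $s,s'\in\{1,2\}$. From $R\le S$ we obtain $R^{-1}\le S^{-1}$, whence $\phi^{(s')}(R^{-1})\le\phi^{(s')}(S^{-1})$ by the isotonicity of $\phi^{(s')}$ viewed as a map on ${\cal R}(B,A)$, and converse preserves the order; as $\land$ is isotone in both arguments, the meet of isotone maps is isotone.

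For image-localization I would take $K=\{V_i(a,a')\mid a,a'\in A,\ i\in I\}\cup\{W_i(b,b')\mid b,b'\in B,\ i\in I\}$, which is finite exactly because $A$, $B$ and $I$ are. Unwinding the definitions, a typical entry is $[\phi^{(1)}(R)](a,b)=\bigwedge_{i\in I}\bigwedge_{a'\in A}\bigl(V_i(a,a')\to\bigvee_{b'\in B}W_i(b,b')\otimes R(a',b')\bigr)$, and analogously for $\phi^{(2)}$. Since $A$, $B$, $I$ are finite, every supremum and infimum occurring here is finite, so each entry is obtained from elements of $K\cup\im(R)$ by finitely many applications of $\land,\lor,\otimes,\to$; it therefore lies in the subalgebra $\langle K\cup\im(R)\rangle$, which is exactly condition (\ref{eq:im.loc}) for $t\in\{1,2\}$.

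Finally, for $t\in\{3,\ldots,6\}$ I would use that converse leaves the image unchanged, $\im(R^{-1})=\im(R)$: every entry of $\phi^{(s')}(R^{-1})$ lies in $\langle K\cup\im(R^{-1})\rangle=\langle K\cup\im(R)\rangle$, and taking converse and then meeting with $\phi^{(s)}(R)$ keeps each entry inside this subalgebra, since a subalgebra is closed under $\land$. Thus the same $K$ serves as a localization set for all six functions. The only point needing care---and the main obstacle---is the bookkeeping when $R$ is replaced by $R^{-1}$: one must confirm that $\phi^{(s')}$, reinterpreted as a map on ${\cal R}(B,A)$, still draws its constants from the same finite pool $K$, which it does because the families $\{V_i\}_{i\in I}$ and $\{W_i\}_{i\in I}$ are unchanged.
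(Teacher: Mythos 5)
Your proof is correct and follows the paper's overall architecture: establish the two claims for $\phi^{(1)}$ and $\phi^{(2)}$, then transfer them to $\phi^{(3)}$--$\phi^{(6)}$ via the representations (\ref{eq:phi3})--(\ref{eq:phi6}), using that $R\mapsto R^{-1}$ is order-preserving and image-preserving. The one place you genuinely diverge is the isotonicity of $\phi^{(1)}$ (and dually $\phi^{(2)}$): you argue pointwise, from $W_i\circ R^{-1}\le W_i\circ S^{-1}$ together with the fact that the residuum $x\to y$ is isotone in $y$, so that the residual is isotone in its first (residuated) argument. The paper instead invokes Sanchez's characterization: $[(W_i\circ R^{-1})\backslash V_i]^{-1}$ is the greatest solution $U$ of the linear system $U^{-1}\circ V_i\le W_i\circ R^{-1}$, and since enlarging $R$ enlarges the right-hand sides, every solution of the first system solves the second, whence the greatest solutions are comparable. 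Both arguments hinge on the same inequality $W_i\circ R^{-1}\le W_i\circ S^{-1}$; yours is more self-contained (no appeal to the solution-set description of residuals), while the paper's is shorter because it reuses machinery already set up in Section 4. On image-localization you choose exactly the paper's localization set $K=\bigcup_{i\in I}(\im(V_i)\cup \im(W_i))$ and supply the explicit entrywise computation that the paper dismisses as obvious; your remark that $\im(R^{-1})=\im(R)$ handles the cases $t\in\{3,\ldots,6\}$ is a detail the paper leaves implicit.
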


\begin{proof}
We will prove only the statement concerning the function $\phi^{(1)}$.~The validity of the statement concerning the function $\phi^{(2)}$ then follows because of duality with the first one, whereas all other statements follow by the first two, according to (\ref{eq:phi3})--(\ref{eq:phi6}).

Let $R_1,R_2\in {\cal R}(A, B)$ such that $R_1\le R_2$, and consider the following systems of fuzzy relation inequalities:
\begin{align}
U^{-1}\circ V_i\le W_i\circ R_1^{-1},\hspace{0.1 in} i\in I;\label{eq:syst.R1} \\
U^{-1}\circ V_i\le W_i\circ R_2^{-1},\hspace{0.1 in} i\in I,\label{eq:syst.R2}
\end{align}
where $U$ is an unknown fuzzy relation between $A$ and $B$.~As we mentioned earlier, fuzzy relations
\[
\phi^{(1)}(R_1)=\bigwedge_{i\in I}[(W_i\circ R_1^{-1})\backslash V_i]^{-1}\quad\text{and}\quad
\phi^{(1)}(R_2)=\bigwedge_{i\in I}[(W_i\circ R_2^{-1})\backslash V_i]^{-1}
\]
are respectively the greatest solutions to (\ref{eq:syst.R1}) and (\ref{eq:syst.R2}), and by $R_1\le R_2$ it follows
$W_i\circ R_1^{-1}\le W_i\circ R_2^{-1}$, for each $i\in I$, so every solution to (\ref{eq:syst.R1}) is a solution to (\ref{eq:syst.R2}).~Consequently, $\phi^{(1)}(R_1)$ is a solution to (\ref{eq:syst.R2}), which means that $\phi^{(1)}(R_1)\le \phi^{(1)}(R_2)$.~Therefore, $\phi^{(1)}$ is an isotone function.

If $A$, $B$ and $I$ are finite sets, then the set $K=\bigcup_{i\in I}(\im(V_i)\cup \im(W_i))$ is also finite, and
for each $R\in {\cal R}(A,B)$ we obviously have that $\im(\phi^{(1)}(R))\subseteq\langle K\bigcup \im(R)\rangle$.~This means that the function $\phi^{(1)}$ is image-localized.
\end{proof}

Now we are ready to state and prove one of the main theorems of the paper.

\begin{theorem}\label{th:alg.fin}
Let $A$, $B$ and $I$ be finite sets, let $\phi =\phi^{(t)}$, for some $t\in\{1,\ldots ,6\}$, and let $\{R_k\}_{k\in \Bbb N}$ be the sequence of fuzzy relations from ${\cal R}(A,B)$ defined by $(\ref{eq:seq})$.

If $\langle \im (Z)\cup \bigcup_{i\in I}(\,\im (V_i)\cup \im(W_i)\,)\rangle $ is a finite subalgebra of $\cal L$, then the following is true:
\begin{itemize}\parskip0pt
\item[{\rm (a)}] the sequence $\{R_k\}_{k\in \mathbb{N}}$ is finite and
descending, and there is the least natural number $k$ such that $R_k=R_{k+1}$;
\item[{\rm (b)}] $R_k$ is the greatest solution to system ($wl$2-t).
\end{itemize}
\end{theorem}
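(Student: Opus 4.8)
The plan is to assemble the conclusion from the three preparatory results already established, applied to the specific functions $\phi^{(t)}$. First I would observe that by Theorem \ref{th:phiw}, under the hypothesis that $A$, $B$ and $I$ are finite, each function $\phi^{(t)}$ is isotone and image-localized, and the set $K=\bigcup_{i\in I}(\im(V_i)\cup\im(W_i))$ serves as a localization set. This is the crucial input that lets us invoke the general machinery of Section \ref{sec:alg} for the particular $\phi=\phi^{(t)}$ at hand.

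For part (a), I would appeal directly to Theorem \ref{th:phi.loc}. Since $\phi^{(t)}$ is image-localized with localization set $K$, that theorem gives $\bigcup_{k\in\mathbb{N}}\im(R_k)\subseteq\langle K\cup\im(Z)\rangle$. The hypothesis of the present theorem is precisely that $\langle\im(Z)\cup\bigcup_{i\in I}(\im(V_i)\cup\im(W_i))\rangle=\langle K\cup\im(Z)\rangle$ is a finite subalgebra of $\cal L$. Hence by the second assertion of Theorem \ref{th:phi.loc} the sequence $\{R_k\}_{k\in\mathbb{N}}$ is finite. The sequence is descending by construction from $(\ref{eq:seq})$, since $R_{k+1}=R_k\land\phi(R_k)\le R_k$. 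A finite descending sequence must stabilize, so there is a least natural number $k$ with $R_k=R_{k+1}$; this is the remark already made after $(\ref{eq:hat.phi})$ about recognizing stabilization.

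For part (b), I would show that the stabilization value $R_k$ equals the greatest post-fixed point $\hat R$ of $\phi^{(t)}$ contained in $Z$, and then translate this back to the original system via Theorem \ref{th:eq.form}. When $R_k=R_{k+1}=R_k\land\phi(R_k)$, we get $R_k\le\phi(R_k)$, so $R_k$ is a post-fixed point of $\phi^{(t)}$; moreover $R_k\le R_1=Z$, so $R_k$ is a post-fixed point contained in $Z$, giving $R_k\le\hat R$. The reverse inequality $\hat R\le R_k$ follows from $(\ref{eq:hat.phi})$, namely $\hat R\le\bigwedge_{m\in\mathbb{N}}R_m\le R_k$. Therefore $R_k=\hat R$. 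By Theorem \ref{th:eq.form}, the system ($wl$2-$t$) is equivalent to $U\le\phi^{(t)}(U)$, $U\le Z$, whose solutions are exactly the post-fixed points of $\phi^{(t)}$ contained in $Z$; the greatest such is $\hat R=R_k$, so $R_k$ is the greatest solution to ($wl$2-$t$).

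The main obstacle, if any, is purely a matter of bookkeeping rather than depth: one must verify cleanly that the equality $(\ref{eq:hat.phi})$ is an equality at the stabilization point, i.e.\ that the finite descending sequence actually reaches $\hat R$ and does not merely overshoot. This is handled by the two-sided argument above, where the descending-finiteness from part (a) supplies the key that the infimum $\bigwedge_k R_k$ is attained at the stabilization index $k$.
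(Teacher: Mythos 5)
Your proposal is correct and follows essentially the same route as the paper: part (a) is the identical combination of Theorems \ref{th:phiw} and \ref{th:phi.loc} with the observation that a finite descending sequence stabilizes, and part (b) establishes the same two facts ($R_k\le\phi^{(t)}(R_k)$, $R_k\le Z$, plus maximality) before invoking Theorem \ref{th:eq.form}. The only cosmetic difference is that for maximality you route through the greatest post-fixed point $\hat R$ and the inequality (\ref{eq:hat.phi}), whereas the paper runs the underlying induction $R\le R_m$ directly for an arbitrary solution $R$ --- but that induction is exactly the ``easy verification'' behind (\ref{eq:hat.phi}), so the two arguments coincide.
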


\begin{proof}

Let $\langle \im (Z)\cup \bigcup_{i\in I}(\,\im (V_i)\cup \im(W_i)\,)\rangle $ be a finite subalgebra of $\cal L$.

(a) According to Theorems \ref{th:phiw} and \ref{th:phi.loc}, the sequence $\{R_{k}\}_{k\in N}$ is finite and
descending, so there are~$k,m\in \mathbb{N}$ such that $R_k=R_{k+m}$,
whence $R_{k+1}\le R_k=R_{k+m}\le R_{k+1}$.~Thus, there is $k\in \mathbb{N}$ such that $R_k=R_{k+1}$,~and conse\-quently,
there is the least natural number having this property.

(b) Let $k$ be the least natural number such that $R_k=R_{k+1}$.~It is clear
that $R_k\le Z$.~Moreover, we have that $R_k=R_{k+1}\le \phi^{(t)}(R_k)$,
and according Theorem \ref{th:eq.form} we obtain that $R_k$ is a solution
to the system ($wl$2-t).

Let $R$ be an arbitrary solution to the system ($wl$2-t).~First, we have
that $R\le Z=R_1$.~Next, suppose that $R\le R_m$, for some $m\in \mathbb{N}$.~Then
$R\le \phi^{(t)}(R)\le \phi^{(t)}(R_m)$, so $R\le R_m\land \phi^{(t)}(R_m)=R_{m+1}$.~Therefore,
by induction we conclude that $R\le R_m$, for every $m\in \mathbb{N}$, and
consequently, $R\le R_k$.~Hence, we have proved that $R_k$ is the greatest
solution to the system ($wl$2-t).
\end{proof}

Next, we will consider the case when ${\cal L}=(L,\land ,\lor , \otimes ,\to , 0,
1)$ is a complete residuated lattice satisfying
the following conditions:
\begin{eqnarray}\label{eq:infd}
x\lor \bigl(\bigwedge_{i\in I}y_i\bigr) = \bigwedge_{i\in I}(x\lor y_i) ,\\
\label{eq:infdm}
x\otimes \bigl(\bigwedge_{i\in I}y_i\bigr) = \bigwedge_{i\in I}(x\otimes y_i) ,
\end{eqnarray}
for all $x\in L$ and $\{y_i\}_{i\in I}\subseteq L$.~Let us note that if ${\cal L}=([0,1],\land ,\lor , \otimes ,\to , 0, 1)$, where $[0,1]$ is the real unit interval
and $\otimes $ is a left-con\-tin\-uous t-norm on $[0,1]$, then (\ref{eq:infd}) follows
immediately by linearity of $\cal L$, and $\cal L$ satisfies (\ref{eq:infdm}) if and only if $\otimes $ is
a continuous t-norm, i.e., if and only if $\cal L$ is a $BL$-algebra~(cf.~\cite{Bel.02,BV.05}).~Therefore,~conditions
(\ref{eq:infd}) and (\ref{eq:infdm}) hold for~every $BL$-algebra on the real unit interval.~In particular,
the {\L}ukasiewicz, Goguen (product)~and~G\"odel structures fulfill {\rm (\ref{eq:infd})} and {\rm (\ref{eq:infdm})}.

Under these conditions we have the following.

\begin{theorem}\label{th:infimum}
Let $\phi =\phi^{(t)}$, for some $t\in\{1,\ldots ,6\}$, let $\{R_k\}_{k\in \Bbb N}$ be the sequence of fuzzy relations from ${\cal R}(A,B)$ defined by $(\ref{eq:seq})$, and let $\cal L$ be a complete residuated lattice satisfying
$(\ref{eq:infd})$ and $(\ref{eq:infdm})$.

Then the fuzzy relation
\[
R=\bigwedge_{k\in \mathbb{N}}R_k ,
\]
is the greatest solution to system ($wl$2-t).
\end{theorem}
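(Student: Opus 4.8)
The plan is to show two inclusions. Since $R=\bigwedge_{k\in\mathbb N}R_k$ and each $R_k\le Z$ (as the sequence descends from $R_1=Z$), we immediately have $R\le Z$, so the constraint $U\le Z$ is satisfied. The real content is to prove that $R$ is a solution to the equivalent system $U\le\phi^{(t)}(U)$ from Theorem \ref{th:eq.form}, i.e.\ that $R\le\phi^{(t)}(R)$, and then that $R$ dominates every other solution. For the latter, I would reuse verbatim the induction from the proof of Theorem \ref{th:alg.fin}(b): if $S$ is any solution, then $S\le Z=R_1$, and $S\le R_m$ gives $S\le\phi^{(t)}(S)\le\phi^{(t)}(R_m)$ by isotonicity (Theorem \ref{th:phiw}), hence $S\le R_m\land\phi^{(t)}(R_m)=R_{m+1}$; by induction $S\le R_m$ for all $m$, so $S\le\bigwedge_k R_k=R$. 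This part needs no finiteness and no distributivity, so it goes through unchanged.

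\emph{The main obstacle} is establishing $R\le\phi^{(t)}(R)$ without the sequence being finite: here the infinite descending meet must be handled carefully, and this is exactly where hypotheses (\ref{eq:infd}) and (\ref{eq:infdm}) enter. From $R_{k+1}=R_k\land\phi(R_k)\le\phi(R_k)$ and the monotone sequence I would argue
\[
R=\bigwedge_{k\in\mathbb N}R_{k+1}\le\bigwedge_{k\in\mathbb N}\phi(R_k),
\]
and the crux is to interchange $\phi$ with the infimum, i.e.\ to show $\bigwedge_k\phi(R_k)\le\phi\bigl(\bigwedge_k R_k\bigr)=\phi(R)$. Equivalently I must verify the stronger property that each $\phi^{(t)}$ is \emph{inf-preserving} on descending chains, $\phi\bigl(\bigwedge_k R_k\bigr)=\bigwedge_k\phi(R_k)$, whose nontrivial direction $\bigwedge_k\phi(R_k)\le\phi(\bigwedge_k R_k)$ is what conditions (\ref{eq:infd}) and (\ref{eq:infdm}) are designed to supply.

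To carry this out I would, as in the previous proofs, treat only $t=1$ and invoke duality (Proposition \ref{prop:dual.het}) together with the decompositions (\ref{eq:phi3})--(\ref{eq:phi6}) for the remaining cases, noting that a finite meet of inf-preserving maps is inf-preserving and that the converse $(\cdot)^{-1}$ commutes with arbitrary meets by the pointwise definition. For $\phi^{(1)}(R)=\bigwedge_{i\in I}[(W_i\circ R^{-1})\backslash V_i]^{-1}$ the verification unwinds to a pointwise statement about $L$: expanding the composition (\ref{eq:comp.rr}) and the left residual (\ref{eq:lr.def}) produces nested suprema and residua built from the entries of $R_k$, and pushing the infimum $\bigwedge_k$ inside past the $\lor$'s and $\otimes$'s (in $W_i\circ R_k^{-1}$) is precisely licensed by (\ref{eq:infd}) and (\ref{eq:infdm}); the residuum $\to$ is antitone in its first argument, so it converts the resulting inner infimum into the desired outer one. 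Once $R\le\phi^{(1)}(R)$ is secured, combining it with $R\le Z$ and the maximality argument above yields that $R$ is the greatest solution to (\ref{eq:w21}), and hence by duality to every ($wl$2-$t$).

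\begin{remark}
One should check that the infimum interchange survives the two applications of $(\cdot)^{-1}$ and the composition with the fixed relations $V_i,W_i$; since $|I|$ may be infinite here the outer $\bigwedge_{i\in I}$ must be kept separate from the chain-infimum $\bigwedge_k$, but as both are meets they commute freely, so no additional hypothesis on $I$ is needed.
\end{remark}
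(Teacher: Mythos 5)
Your proposal is correct and follows essentially the same route as the paper: the maximality argument is the identical induction, and your interchange $\bigwedge_{k}\phi^{(1)}(R_k)=\phi^{(1)}\bigl(\bigwedge_k R_k\bigr)$ reduces to the very computation the paper performs pointwise, namely $\bigwedge_{k}(W_i\circ R_k^{-1})=W_i\circ R^{-1}$ via (\ref{eq:infd}) (through the descending-sequence identity), (\ref{eq:infdm}) and the finiteness of $B$, the paper merely packaging the last step through the adjunction $R^{-1}\circ V_i\le W_i\circ R_k^{-1}$ rather than through meet-preservation of the residual operation. One small correction: the property of $\to$ you actually need is that it preserves arbitrary meets in its \emph{second} argument (where the entries of $R_k$ sit inside the left residual (\ref{eq:lr.def})), not antitonicity in the first.
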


\begin{proof}
We will prove only the case $t=1$.~All other cases can be proved similarly.

~First we recall a claim proved in  \cite{CSIP.10}, which says that if {\rm (\ref{eq:infd})} is satisfied, then for all descending sequences
$\{x_k\}_{k\in \mathbb{N}}, \{y_k\}_{k\in \mathbb{N}}\subseteq L$ we have
\begin{equation}\label{eq:desc.seq}
\bigwedge_{k\in \mathbb{N}}(x_k\lor y_k) = \bigl(\bigwedge_{k\in \mathbb{N}}x_k\bigr)\lor
\bigl(\bigwedge_{k\in \mathbb{N}}y_k\bigr).
\end{equation}
Now, for arbitrary $i\in I$, $a\in A$ and $b\in B$ we have that
\[
\begin{aligned}
\Biggl(\,\bigwedge_{k\in \mathbb{N}}(W_i\circ R_k^{-1})\Biggr)\,(b,a)&=\bigwedge_{k\in \mathbb{N}}(W_i\circ R_k^{-1})(b,a)=\bigwedge_{k\in \mathbb{N}}\Biggl(\,\bigvee_{b'\in
B}W_i(b,b')\otimes R_k^{-1}(b',a)\Biggr) && \\
&= \bigvee_{b'\in B}\Biggl(\,\bigwedge_{k\in \mathbb{N}}W_i(b,b')\otimes R_k^{-1}(b',a)\Biggr) && \text{(by (\ref{eq:desc.seq}))} \\
&= \bigvee_{b'\in B}\Biggl(\,W_i(b,b')\otimes\biggl( \bigwedge_{k\in \mathbb{N}}R_k^{-1}(b',a)\biggr)\Biggr) && \text{(by (\ref{eq:infdm}))} \\
&= \bigvee_{b'\in B}\biggl(\,W_i(b,b')\otimes R^{-1}(b',a)\biggr)
=(W_i\circ R^{-1})(b,a),  &&
\end{aligned}
\]
which means that
\[
\bigwedge_{k\in \mathbb{N}}W_i\circ R_k^{-1}=W_i\circ R^{-1},
\]
for every $i\in I$.~The use of condition (\ref{eq:desc.seq}) is justified by the facts that $B$ is finite, and that
$\{R_k^{-1}(b',a)\}_{k\in \mathbb{N}}$ is a descending sequence, so
$\{W_i(b,b')\otimes R_k^{-1}(b',a)\}_{k\in \mathbb{N}}$ is also a descending sequence.

Next, for all $i\in I$ and $k\in \mathbb{N}$ we have that
\[
R\le R_{k+1}\le \phi^{(1)}(R_k)=[(W_i\circ R_k^{-1})\backslash
V_i]^{-1},
\]
which is equivalent to
\[
R^{-1}\circ V_i\le W_i\circ R_k^{-1}.
\]
As the last inequality holds for every $k\in \mathbb{N}$, we have that
\[
R^{-1}\circ V_i\le \bigwedge_{k\in \mathbb{N}}W_i\circ R_k^{-1}=W_i\circ R^{-1},
\]
for every $i\in I$.~Therefore, $R$ is solution to ($wl$2-1).

Let $S\in {\cal R}(A,B)$ be an arbitrary fuzzy relation which is solution to ($wl$2-1).~According to Theorem~\ref{th:eq.form},
$S\le \phi^{(1)}(S)$ and $S\le Z=R_1$.~By induction we can easily prove that
$S\le R_{k}$ for every $k\in \mathbb{N}$, and therefore, $S\le
R$.~This means that $R$ is the greatest solution to ($wl$2-1).
\end{proof}

In some situations we do not need solutions to systems of fuzzy relation equations and inequalities that are fuzzy relations, but those that are ordinary crisp relations. Moreover, in cases where our algorithms for computing the greatest solutions to weakly linear systems fail to terminate in a finite number of steps, it is reasonable to search for the greatest crisp solutions to these systems.~They can be understood as some~kind of ``approximations'' of the greatest fuzzy solutions.~It has been shown in \cite{ICB.10} that algorithms for computing the greatest fuzzy solutions to homogeneous weakly linear systems can be modified to compute the greatest crisp solutions to these systems.~Exactly the same way of modification is also applicable to heterogeneous weakly linear systems.~Nevertheless, for the sake of completeness, we will present the~method for computing the greatest solutions to heterogeneous weakly linear systems.

Let $A$ and $B$ be non-empty finite sets, and let ${\cal R}^{c}(A,B)$ be the set of all crisp relations from ${\cal R}(A,B)$.~It~is~easy to verify that ${\cal R}^{c}(A,B)$ is a complete sublattice of ${\cal R}(A,B)$, i.e., the meet and the join in ${\cal R}(A,B)$ of an arbitrary family
of crisp relations from ${\cal R}^{c}(A,B)$ are also crisp relations (in
fact, they coincide with the~ordinary~inter\-section and union of crisp relations).~Moreover,
for any fuzzy relation  $R\in {\cal R}(A,B)$ we have that $R^{c}\in {\cal R}^{c}(A,B)$, where $R^{c}$ denotes the \textit{crisp part\/} of a fuzzy relation $R $ (in some sources called the \textit{kernel\/} of $R $), i.e.,~a
function $R^{c}:A\times B\to \{0,1\}$ defined
by $R^{c}(a,b)=1$, if $R(a,b)=1$, and $R^{c}(a,b)=0$, if
$R(a,b)<1$, for all $a\in A$ and $b\in B$. Equivalently,
$R^{c}$ is considered as an ordinary crisp relation between $A$ and $B$~given
by $R^{c}=\{(a,b)\in A\times B\mid R(a,b)=1\}$.

For each function $\phi :{\cal R}(A,B)\to {\cal R}(A,B)$ we define a function $\phi^{c}: {\cal R}^c(A,B)\to {\cal R}^c(A,B)$~by
\[
\phi^{\mathrm{c}}(R)=(\phi(R))^{\mathrm{c}}, \ \ \text{for any}\ R\in {\cal R}^c(A,B).
\]
If $\phi $ is isotone, then it can be easily shown that $\phi^{\mathrm{c}}$ is also an isotone function.

We have that the following is true.

\begin{proposition}\label{prop:crisp.alg}
Let $A$ and $B$ be non-empty finite sets, let\ $\phi :{\cal R}(A,B)\to {\cal R}(A,B)$ be an isotone function and let  $W\in {\cal R}(A,B)$ be a given fuzzy relation.~A~crisp relation $\varrho\in {\cal R}^c(A,B)$ is the greatest  crisp solution in ${\cal R}(A,B)$ to the system
\begin{equation}\label{eq:c.fri}
U\le \phi (U), \qquad\qquad U\le W,
\end{equation}
 if and only if it is the greatest solution in ${\cal R}^c(A,B)$ to the system
\begin{equation}\label{eq:c.cri}
\xi\le \phi^{c} (\xi), \qquad\qquad \xi\le W^c,
\end{equation}
 where $U$ is an unknown fuzzy relation and $\xi $ is an unknown crisp relation.

Furthermore, a sequence $\{\varrho_k\}_{k\in \mathbb{N}}\subseteq {\cal R}(A,B)$ defined by
\begin{equation}\label{eq:c.seq}
\varrho_1=W^c, \ \ \varrho_{k+1}=\varrho_k\land \phi^{c}(\varrho_k), \ \ \text{for every}\ k\in \mathbb{N},
\end{equation}
is a finite descending sequence of crisp relations, and the least
member of this sequence is the greatest solution to the system $(\ref{eq:c.cri})$ in ${\cal R}^c(A,B)$.
\end{proposition}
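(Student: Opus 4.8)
The plan is to prove Proposition \ref{prop:crisp.alg} in two parts, corresponding to its two displayed claims. The first claim is an equivalence between two optimization problems; the second is a concrete algorithm for solving the second of them.

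\medskip

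For the first claim, I would first observe that the crisp relations form a complete sublattice ${\cal R}^c(A,B)$ of ${\cal R}(A,B)$, as noted just before the statement, so that meets and joins of families of crisp relations remain crisp. The key elementary fact I would isolate is that for any crisp relation $\varrho\in {\cal R}^c(A,B)$ and any fuzzy relation $S\in {\cal R}(A,B)$, the inequality $\varrho\le S$ holds if and only if $\varrho\le S^c$; this is immediate from the definition of the crisp part, since $\varrho(a,b)=1$ forces $S(a,b)=1$. Applying this with $S=\phi(\varrho)$ and with $S=W$ shows that, for crisp $\varrho$, the system (\ref{eq:c.fri}) is satisfied if and only if $\varrho\le\phi^c(\varrho)$ and $\varrho\le W^c$, that is, if and only if $\varrho$ solves (\ref{eq:c.cri}). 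Hence the crisp solutions to (\ref{eq:c.fri}) are exactly the solutions to (\ref{eq:c.cri}), and the two problems have the same greatest element; this gives the asserted equivalence in both directions.

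\medskip

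For the second claim I would argue that the sequence $\{\varrho_k\}$ defined by (\ref{eq:c.seq}) is the instance of the general construction (\ref{eq:seq}) applied to the isotone function $\phi^c$ on the finite lattice ${\cal R}^c(A,B)$, starting from $Z=W^c$. Since $\phi$ is isotone, $\phi^c$ is isotone (as remarked above the statement), and every term $\varrho_k$ is crisp because ${\cal R}^c(A,B)$ is closed under $\land$ and under $\phi^c$. The sequence is descending and takes values in the finite lattice ${\cal R}^c(A,B)$ (finite because $A$ and $B$ are finite), so it must stabilize after finitely many steps at its least member $\varrho$. This $\varrho$ satisfies $\varrho=\varrho\land\phi^c(\varrho)$, hence $\varrho\le\phi^c(\varrho)$, and $\varrho\le\varrho_1=W^c$, so $\varrho$ solves (\ref{eq:c.cri}). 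Maximality follows by the same induction as in the proof of Theorem \ref{th:alg.fin}(b): any crisp solution $\xi$ satisfies $\xi\le W^c=\varrho_1$, and $\xi\le\varrho_m$ together with $\xi\le\phi^c(\xi)\le\phi^c(\varrho_m)$ yields $\xi\le\varrho_{m+1}$, so $\xi\le\varrho_k$ for all $k$ and thus $\xi\le\varrho$.

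\medskip

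I expect no serious obstacle here; the proposition is essentially a transcription of the finite-lattice argument of Theorem \ref{th:alg.fin} into the sublattice ${\cal R}^c(A,B)$, combined with the trivial reduction $\varrho\le S\iff\varrho\le S^c$ for crisp $\varrho$. The only point requiring a little care is verifying that $\phi^c$ genuinely maps ${\cal R}^c(A,B)$ into itself and preserves isotonicity, but both are straightforward from the definitions, so the main content is simply assembling these observations in the right order.
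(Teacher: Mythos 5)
Your proof is correct: the reduction $\varrho\le S\iff\varrho\le S^{c}$ for crisp $\varrho$, together with the finite-lattice stabilization argument and the induction from Theorem \ref{th:alg.fin}(b) transplanted to $\phi^{c}$ on ${\cal R}^{c}(A,B)$, establishes both claims. The paper itself states this proposition without proof (deferring to the analogous modification in the homogeneous case), and your argument is exactly the intended one, with no gaps.
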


Taking $\phi $ to be any of the functions $\phi^{(t)}$, for $t\in \{1,\ldots
 ,6\}$, Proposition \ref{prop:crisp.alg} gives algorithms for computing the
greatest crisp solutions to heterogeneous weakly linear systems.~As we have seen~in Proposi\-tion~\ref{prop:crisp.alg},~these algorithms always terminate in a finite number of steps, independently of the proper\-ties of the underlying structure of truth values, and they could be used in cases when algorithms
for computing the greatest fuzzy solutions do not terminate in a finite number
of steps.~However, the next example shows that there~are~cases when heterogeneous
weakly linear systems have non-empty fuzzy solutions, but they do not have non-empty crisp solutions.

\begin{example}\label{ex:first}\rm
Let $\cal L$ be the G\"odel structure, let $A$ and $B$ be sets with
$|A|=3$ and $|B|=2$, and let fuzzy relations $V_1,V_2\in {\cal R}(A)$,
$W_1,W_2\in {\cal R}(B)$, and $Z\in {\cal R}(A,B)$ be represented by the following fuzzy matrices:
\[
V_{1}=\begin{bmatrix}
 1  & 0.3 & 0.4 \\
0.5 &  1  & 0.3 \\
0.4 & 0.6 & 0.7
\end{bmatrix},\quad
V_{2}=\begin{bmatrix}
0.5 & 0.6 & 0.2 \\
0.6 & 0.3 & 0.4 \\
0.7 & 0.7 &  1
\end{bmatrix},\quad
W_{1}=\begin{bmatrix}
 1  & 0.6 \\
0.6 & 0.7
\end{bmatrix},\quad
W_{2}=\begin{bmatrix}
0.6 & 0.6 \\
0.7 &  1
\end{bmatrix},\quad
Z= \begin{bmatrix}
1 & 1 \\
1 & 1 \\
1 & 1
\end{bmatrix}.
\]
Using algorithms based on Theorem \ref{th:alg.fin} we obtain that the greatest solutions to ($wl2$-1)--($wl2$-6) are respectively given by the fuzzy matrices
\[\small
R^{(1)}=\begin{bmatrix}
 1  & 0.7 \\
 1  & 0.7 \\
0.6 &  1
\end{bmatrix}, \ \
R^{(2)}=\begin{bmatrix}
 1  & 0.7 \\
 1  & 0.7 \\
0.7 &  1
\end{bmatrix}, \ \
R^{(3)}=\begin{bmatrix}
 1  & 0.6 \\
 1  & 0.6 \\
0.6 &  1
\end{bmatrix}, \ \
R^{(4)}=\begin{bmatrix}
 1  & 0.7 \\
 1  & 0.7 \\
0.7 &  1
\end{bmatrix}, \ \
R^{(5)}=\begin{bmatrix}
 1  & 0.6 \\
 1  & 0.6 \\
0.7 &  1
\end{bmatrix}, \ \
R^{(6)}=\begin{bmatrix}
 1  & 0.7 \\
 1  & 0.7 \\
0.6 &  1
\end{bmatrix}.
\]
On the other hand, using the algorithms for computing the
greatest crisp solutions, we obtain that~there~are~no non-empty crisp solutions to ($wl2$-1)-- ($wl2$-6).
\end{example}

It is worth noting that functions $(\phi^{(t)})^c$, for all $t\in \{1,\ldots,6\}$, can be characterized as follows:
\[
\begin{aligned}
&(a,b)\in (\phi^{(1)})^{c}(\varrho )\ \iff\ \ (\forall i\in I)(\forall a'\in A)\,V_{i}(a,a')\le (W_{i}\circ \varrho^{-1})(b,a'), \\
&(a,b)\in (\phi^{(2)})^{c}(\varrho )\ \iff\ \ (\forall i\in I)(\forall a'\in A)\,V_{i}(a',a)\le (\varrho\circ W_{i})(a',b), \\
&(\phi^{(3)})^{c}(\varrho )=(\phi^{(1)})^{c}(\varrho )\land [(\phi^{(1)})^{c}(\varrho^{-1})]^{-1},\quad \
(\phi^{(4)})^{c}(\varrho )=(\phi^{(2)})^{c}(\varrho )\land [(\phi^{(2)})^{c}(\varrho^{-1})]^{-1}, \\
&(\phi^{(5)})^{c}(\varrho )=(\phi^{(2)})^{c}(\varrho )\land [(\phi^{(1)})^{c}(\varrho^{-1})]^{-1},\quad
 \
(\phi^{(6)})^{c}(\varrho )=(\phi^{(1)})^{c}(\varrho )\land [(\phi^{(2)})^{c}(\varrho^{-1})]^{-1},
\end{aligned}
\]
for all $\varrho\in {\cal R}^c(A,B)$, $a\in A$ and $b\in B$.

\section{Quotient fuzzy relational systems}\label{sec:quot}

Loosely speaking, a \emph{relational system\/} is a pair $(A,{\cal R})$ consisting of a non-empty set $A$ and a non-empty family $\cal R$ of finitary relations on $A$ which may have different arities.~Two relational systems $(A,{\cal R}_1)$ and~$(B,{\cal R}_2)$ are considered to be of the same type if a bijective function between ${\cal R}_1$ and ${\cal R}_2$ is given that preserves arity. When we deal only with binary relations, then relational systems $(A,{\cal R}_1)$ and  $(B,{\cal R}_2)$ are of the same type if ${\cal R}_1$ and ${\cal R}_2$ can be written as
${\cal R}_1=\{V_i\}_{i\in I}$ and ${\cal R}_2=\{W_i\}_{i\in I}$, for some non-empty index set~$I$.~In this case, the bijective function that we have mentioned above is just the function that maps $V_i$ to $W_i$, for each $i\in I$.

Here we consider relational systems in the fuzzy context, and we work only with binary fuzzy relations. We define a \emph{fuzzy relational system\/} as a pair ${\cal A}=(A,\{V_i\}_{i\in I})$, where $A$ is a non-empty set and $\{V_i\}_{i\in I}$ is~a~non-empty family of fuzzy relations on $A$, and by fuzzy relational systems of the same type we will mean~systems of the form ${\cal A}=(A,\{V_i\}_{i\in I})$ and ${\cal B}=(B,\{W_i\}_{i\in I})$.~To avoid writing multiple indices, the fuzzy relational system ${\cal A}=(A,\{V_i\}_{i\in I})$ will be sometimes denoted by ${\cal A}=(A,I,V_i)$.~All fuzzy relational systems discussed in the sequel will be of the same type.

Let ${\cal A}=(A,I,V_i)$ and ${\cal B}=(B,I,W_i)$ be two fuzzy relational systems.~A function $\phi :A\to B$ is called an {\it isomorphism\/} if
it is bijective and $V_i(a_1,a_2)=W_i(\phi(a_1),\phi(a_2))$, for all $a_1,a_2\in A$ and $i\in I$.

Let ${\cal A}=(A,I,V_i)$ be a fuzzy relational system and let $E$ be a fuzzy equivalence on $A$.~For each $i\in I$, define a fuzzy relation $V_i^{A/E}$ on the quotient (factor) set $A/E$ as follows:
\begin{equation}\label{eq:quot.syst}
V^{A/E}_i(E_{a_1},E_{a_2})=(E\circ V_i\circ E)(a_1,a_2),
\end{equation}
for all $a_1,a_2\in A$.~The right side of (\ref{eq:quot.syst}) can be equivalently written as
\[
(E\circ V_i\circ E)(a_1,a_2)=\bigvee_{a_1',a_2'\in A}E(a_1,a_1')\otimes V_i(a_1',a_2')\otimes E(a_2',a_2)=E_{a_1}\circ V_i\circ E_{a_2},
\]
and for all $a_1,a_2,a_1',a_2'\in A$ such that $E_{a_1}=E_{a_1'}$ and $E_{a_2}=E_{a_2'}$ we have that $(E\circ V_i\circ E)(a_1,a_2)=(E\circ V_i\circ E)(a_1',a_2')$.
Therefore, the fuzzy relation $V^{A/E}_i$ is well-defined, and ${\cal A}/E=(A/I,I,V^{A/E}_i)$ is a fuzzy relational system of~the~same type as ${\cal A}$, which is called the {\it quotient\/} (or {\it factor\/}) {\it fuzzy relational system\/} of ${\cal A}$, with respect to the fuzzy equivalence $E$.

Note that this concept of quotient fuzzy relational system emerges from the theory of fuzzy automata, namely, it originates from the concept of a factor (quotient) fuzzy automaton.~Factor fuzzy automata were introduced in \cite{CSIP.07,CSIP.10}, where they were used to reduce the number of states of fuzzy automata.~We~will~see~in Section \ref{sec:appl} that quotient (fuzzy) relational systems can be also used to reduce the number of nodes of a (fuzzy) network, while keeping the basic structure of the network.~It is also worth noting that quotient crisp relational systems have been recently defined in the same way in \cite{CL.10}.

The following theorem can be conceived as an analogue of the well-known theorems of universal~algeb\-ra which establish correspondences between functions and equivalence relations, as well as between homomorphisms and congruences (cf.~\cite[\S\,2.6]{BS.81}).

\begin{theorem}\label{th:E.nat}
Let ${\cal A}=(A,I,V_i)$ be a fuzzy relational system, $E$ a fuzzy equivalence on $A$, and ${\cal A}=(A/E,I,V^{A/E}_i)$  the quotient fuzzy relational system of ${\cal A}$ with respect to $E$.

Then a fuzzy relation $E^\natural \in {\cal R}(A, A/E)$ defined by
\begin{equation}\label{eq:E.nat}
E^\natural (a_1,E_{a_2})=E(a_1,a_2), \qquad\qquad \text{for all $a_1,a_2\in A$},
\end{equation}
is a uniform $F$-function whose kernel is $E$.

Moreover, $E^\natural $ is a solution both to $WL^{\text{2-1}}(A,A/E,I,V_i,V^{A/E}_i)$~and  $WL^{\text{2-2}}(A,A/E,I,V_i,V^{A/E}_i)$.
\end{theorem}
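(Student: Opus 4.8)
The plan is to verify each claimed property of $E^\natural$ in turn, relying heavily on the characterizations of uniform fuzzy relations from Theorem \ref{th:ufr} and the machinery surrounding equivalence classes. First I would establish that $E^\natural$ is well-defined: since $E^\natural(a_1,E_{a_2})=E(a_1,a_2)$, I must check that the value does not depend on the chosen representative $a_2$ of the class $E_{a_2}$. This follows from Lemma \ref{le:feq}, because $E_{a_2}=E_{a_2'}$ is equivalent to $E(a_2,c)=E(a_2',c)$ for all $c$, and in particular to $E(a_1,a_2)=E(a_1,a_2')$ by symmetry. Next I would show $E^\natural$ is an $\cal L$-function: for each $a_1\in A$, taking $a_2=a_1$ gives $E^\natural(a_1,E_{a_1})=E(a_1,a_1)=1$, so the identity-induced map $a_1\mapsto E_{a_1}$ is a crisp description. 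The same computation shows surjectivity onto $A/E$, since every class $E_a$ is hit by $a$ itself, so $E^\natural$ is a surjective $\cal L$-function.

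For uniformity I would use a criterion from Theorem \ref{th:ufr}, the most convenient being (v): compute $E_B^{E^\natural}$ and check it equals $(E^\natural)^{-1}\circ E^\natural$. The key identifications are that the kernel $E_A^{E^\natural}$ equals $E$ itself and the co-kernel $E_B^{E^\natural}$ equals the induced fuzzy equality $\widetilde E$ on $A/E$. To see the kernel claim, I would expand
\[
E_A^{E^\natural}(a_1,a_2)=\bigwedge_{E_c\in A/E}\bigl(E^\natural(a_1,E_c)\leftrightarrow E^\natural(a_2,E_c)\bigr)=\bigwedge_{c\in A}\bigl(E(a_1,c)\leftrightarrow E(a_2,c)\bigr),
\]
which is exactly $E(a_1,a_2)$ by the standard fact that a fuzzy equivalence equals the biresiduum-infimum of its own rows (equivalently by (EX1') applied to $E$, which recovers $E$ as its own kernel). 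The co-kernel computation is analogous and yields $\widetilde E$. Then verifying (iii) or (v) of Theorem \ref{th:ufr} amounts to a direct composition calculation, after which ``$E^\natural$ is a uniform $F$-function with kernel $E$'' is established (here $F=\widetilde E$ plays the role of the co-kernel).

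For the final and most substantive assertion, that $E^\natural$ solves both $WL^{2\text{-}1}$ and $WL^{2\text{-}2}$, I would work directly with the defining inequalities rather than through the $\phi^{(t)}$ reformulation. For $WL^{2\text{-}2}$ the requirement is $V_i\circ E^\natural\le E^\natural\circ V_i^{A/E}$ together with $E^\natural\le Z$ (trivial here since $Z$ is the all-ones relation). Unwinding the composition on the right using the definition \eqref{eq:quot.syst} of $V_i^{A/E}$ as $E\circ V_i\circ E$, and using reflexivity of $E$ (so $E\le E^\natural$ in the appropriate sense) together with transitivity $E\circ E=E$, I would show $E^\natural\circ V_i^{A/E}=E\circ V_i\circ E$ when read back on $A$, and that $V_i\circ E^\natural$ is dominated by this. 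The main obstacle I anticipate is precisely this last step: carefully converting compositions involving $E^\natural$ into compositions of the genuine fuzzy relations $E$ and $V_i$ on $A$, since $E^\natural$ lives on the mixed set $A\times(A/E)$ and one must track how the quotient collapses indices. The reflexivity and transitivity of $E$ are what make the inequalities hold, and I expect $WL^{2\text{-}1}$ to follow either by a symmetric argument or by invoking the duality of Proposition \ref{prop:dual.het} together with the symmetry of $E$. I would finish by noting that since $E$ is symmetric, $(E^\natural)^{-1}$ behaves compatibly, closing both cases at once.
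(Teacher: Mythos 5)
Your proposal is correct and follows essentially the same route as the paper: the substantive part (verifying the inequalities $R^{-1}\circ V_i\le V_i^{A/E}\circ R^{-1}$ and $V_i\circ R\le R\circ V_i^{A/E}$ for $R=E^\natural$) is done exactly as in the paper, by pointwise expansion using $V_i^{A/E}=E\circ V_i\circ E$, the inequality $E\circ V_i\le E\circ V_i\circ E$ coming from reflexivity, and the idempotency $E\circ E=E$ to reabsorb the extra factor when reading $R\circ V_i^{A/E}$ back on $A$. The only difference is that for the first assertion the paper simply cites Theorem 7.1 of \cite{CIB.09}, whereas you re-derive it from Theorem \ref{th:ufr} via the (correct) identifications $E_A^{E^\natural}=E$ and $E_{A/E}^{E^\natural}=\widetilde E$; this is a harmless, self-contained filling-in of the cited result rather than a different argument.
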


\begin{proof}
According to Theorem 7.1 \cite{CIB.09}, $E^\natural $ is a uniform $F$-function of $A$ onto $A/E$ and its kernel is $E$.

Next, for the sake of simplicity set $E^\natural =R$.~Then for all $i\in I$ and $a_1,a_2\in A$ we have that
\begin{equation}\label{eq:E.nat1}
\begin{aligned}
(R^{-1}\circ V_i)(E_{a_1},a_2)&=\bigvee_{a_3\in A}R^{-1}(E_{a_1},a_3)\otimes V_i(a_3,a_2) = \bigvee_{a_3\in A}E(a_1,a_3)\otimes V_i(a_3,a_2) = (E\circ V_i)(a_1,a_2) \\
&\le (E\circ V_i\circ E)(a_1,a_2)=(E\circ V_i\circ E\circ E)(a_1,a_2)=\bigvee_{a_4\in A}(E\circ V_i\circ E)(a_1,a_4)\otimes E(a_4,a_2) \\
&= \bigvee_{a_4\in A}V_i^{A/E}(E_{a_1},E_{a_4})\otimes R^{-1}(E_{a_4},a_2)=(V_i^{A/E}\circ R^{-1})(E_{a_1},a_2),
\end{aligned}
\end{equation}
so $R=E^\natural $ is a solution to the system $WL^{\text{2-1}}(A,A/E,I,V_i,V^{A/E}_i)$, and also,
\begin{equation}\label{eq:E.nat2}
\begin{aligned}
(V_i\circ R)(a_1,E_{a_2})&=\bigvee_{a_3\in A}V_i(a_1,a_3)\otimes R(a_3,E_{a_2})  = \bigvee_{a_3\in A}V_i(a_1,a_3)\otimes E(a_3,a_2) = (V_i\circ E)(a_1,a_2) \\
&\le (E\circ V_i\circ E)(a_1,a_2)=(E\circ E\circ V_i\circ E)(a_1,a_2)=\bigvee_{a_4\in A}E(a_1,a_4)\otimes (E\circ V_i\circ E)(a_4,a_2) \\
&= \bigvee_{a_4\in A}R(a_1,E_{a_4})\otimes V_i^{A/E}(E_{a_4},E_{a_2})=(R\circ V_i^{A/E})(a_1,E_{a_2}),
\end{aligned}
\end{equation}
and hence, $R=E^\natural $ is a solution to the system $WL^{\text{2-2}}(A,A/E,I,V_i,V^{A/E}_i)$.
\end{proof}

We also have the following.

\begin{theorem}\label{th:E.nat2}
Let ${\cal A}=(A,I,V_i)$ be a fuzzy relational system, $E$  a fuzzy equivalence on $A$, and  ${\cal A}=(A/E,I,V^{A/E}_i)$  the quotient fuzzy relational system of ${\cal A}$ with respect to $E$.~Then the following conditions are equivalent:
\begin{itemize}\parskip0pt
\item[{\rm (i)}] $E$ is a solution to $WL^{\text{1-4}}(A,I,V_i)$;
\item[{\rm (ii)}] $E^\natural$ is a solution to $WL^{\text{2-3}}(A,A/E,I,V_i,V^{A/E}_i)$;
\item[{\rm (iii)}] $E^\natural$ is a solution to $WL^{\text{2-5}}(A,A/E,I,V_i,V^{A/E}_i)$.
\end{itemize}
\end{theorem}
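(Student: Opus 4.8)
The plan is to reduce all three conditions to the single inequality
\[
E^\natural\circ V^{A/E}_i\le V_i\circ E^\natural\qquad (i\in I),
\]
call it $(\ast)$, exploiting what Theorem~\ref{th:E.nat} already grants (the constraint $U\le Z$ in each system being vacuous, as $Z$ is the full relation). By that theorem $E^\natural$ is \emph{unconditionally} a solution both to $WL^{\text{2-1}}(A,A/E,I,V_i,V^{A/E}_i)$ and to $WL^{\text{2-2}}(A,A/E,I,V_i,V^{A/E}_i)$; in particular $(E^\natural)^{-1}\circ V_i\le V^{A/E}_i\circ (E^\natural)^{-1}$ and $V_i\circ E^\natural\le E^\natural\circ V^{A/E}_i$ hold for every $i\in I$ with no hypothesis on $E$. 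Now $WL^{\text{2-3}}(A,A/E,I,V_i,V^{A/E}_i)$ is precisely the first of these inequalities together with $(\ast)$, while $WL^{\text{2-5}}(A,A/E,I,V_i,V^{A/E}_i)$ is the conjunction of $V_i\circ E^\natural\le E^\natural\circ V^{A/E}_i$ and its reverse $(\ast)$. Since the always-valid inequalities come for free, both (ii) and (iii) are equivalent to $(\ast)$, which yields (ii)$\iff$(iii) at once.

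Next I would transport $(\ast)$ from $A/E$ back to $A$. Both compositions occurring in $(\ast)$ were in fact already computed inside the proof of Theorem~\ref{th:E.nat}: for all $a_1,a_2\in A$,
\[
(V_i\circ E^\natural)(a_1,E_{a_2})=(V_i\circ E)(a_1,a_2),\qquad (E^\natural\circ V^{A/E}_i)(a_1,E_{a_2})=(E\circ V_i\circ E)(a_1,a_2),
\]
the second identity using $E\circ E=E$ to collapse $E\circ(E\circ V_i\circ E)$ to $E\circ V_i\circ E$. Hence $(\ast)$ is equivalent to $E\circ V_i\circ E\le V_i\circ E$ for all $i\in I$.

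Finally I would show that this last inequality is equivalent to $E\circ V_i\le V_i\circ E$, which---since $E$ is symmetric, so that $E^{-1}=E$ and the two inequalities of $WL^{\text{1-4}}(A,I,V_i)$ collapse into one---is exactly condition (i). One direction is immediate: from $E\circ V_i\le V_i\circ E$ we get $E\circ V_i\circ E\le V_i\circ E\circ E=V_i\circ E$ by monotonicity of composition and $E\circ E=E$. For the converse, reflexivity of $E$ gives $S\le S\circ E$ for any fuzzy relation $S$ (indeed $(S\circ E)(a,b)\ge S(a,b)\otimes E(b,b)=S(a,b)$), so $E\circ V_i\le E\circ V_i\circ E\le V_i\circ E$. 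This closes the chain (i)$\iff(\ast)\iff$(ii)$\iff$(iii).

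The only genuinely delicate point is the identity $(E^\natural\circ V^{A/E}_i)(a_1,E_{a_2})=(E\circ V_i\circ E)(a_1,a_2)$: the supremum defining this composition ranges over classes $E_{a_3}\in A/E$, so one must check that replacing it by a supremum over representatives $a_3\in A$ changes nothing. This rests on the well-definedness of $V^{A/E}_i$ together with Lemma~\ref{le:feq} (equal classes force equal $E$-values), and it is already carried out in the proof of Theorem~\ref{th:E.nat}; granting it, everything else is routine lattice manipulation via $E\circ E=E$ and reflexivity.
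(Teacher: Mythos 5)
Your proof is correct and follows essentially the same route as the paper's: both use Theorem~\ref{th:E.nat} to dispose of the automatically satisfied inequalities, the computation in (\ref{eq:E.nat2}) to transport the remaining inequality down to $E\circ V_i\circ E\le V_i\circ E$, and reflexivity together with $E\circ E=E$ to reduce this to $E\circ V_i\le V_i\circ E$, which by symmetry of $E$ is condition (i). Your explicit handling of (iii) via the splitting of the equation $V_i\circ U=U\circ W_i$ into the automatic inequality plus $(\ast)$ is just a spelled-out version of the paper's ``in the same way'' remark.
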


\begin{proof}
(i)$\iff $(ii). By Theorem \ref{th:E.nat}, $R=E^\natural$ is a solution to $WL^{\text{2-3}}(A,A/E,I,V_i,V^{A/E}_i)$ if and only if $R\circ V_i^{A/E}\le V_i\circ R$, and according to (\ref{eq:E.nat2}), this is valid if and only if $E\circ V_i\circ E\le V_i\circ E$. On the other hand, since $E\circ V_i\le E\circ V_i\circ E$ and $E\circ E=E$, we have that $E\circ V_i\circ E\le V_i\circ E$ is equivalent to $E\circ V_i\le V_i\circ E$.~Since $E$ is symmetric, we
have that it is a solution to  $WL^{\text{1-4}}(A,I,V_i)$.~Hence, (i)$\iff $(ii) is true.

In the same way we prove that (i)$\iff $(iii).
 \end{proof}

The next theorem can be conceived as an analogue of the well-known Second Isomorphism Theorem from universal algebra (cf.~\cite[\S\,2.6]{BS.81}).

\begin{theorem}\label{th:G:E}
Let ${\cal A}=(A,I,V_i)$ be a fuzzy relational system, let $E$ and $F$ be fuzzy equivalences on $A$ such that~$E\leqslant F$, and let ${\cal A}/E=(A/E,I,V^{A/E}_i)$ be the quotient fuzzy relational system of $\cal A$ with respect to $E$.~Then
a fuzzy relation $F/E$ on $A/E$ defined by
\begin{equation}\label{eq:G:E}
F/E (E_{a_1},E_{a_2})=F(a_1,a_2),\ \ \ \text{for all $a_1,a_2\in A$,}
\end{equation}
is a fuzzy equivalence on $A/E$, and the quotient fuzzy relational systems $({\cal A}/E)/(F/E)$
and ${\cal A}/F$ are isomorphic.
\end{theorem}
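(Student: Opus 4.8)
The plan is to verify the statement in three stages: first show $F/E$ is well-defined, then check it is a fuzzy equivalence on $A/E$, and finally construct an explicit isomorphism between $({\cal A}/E)/(F/E)$ and ${\cal A}/F$. For well-definedness I would argue exactly as in the excerpt's discussion following \eqref{eq:quot.syst}: since $E\le F$, whenever $E_{a_1}=E_{a_1'}$ we have $E(a_1,a_1')=1$, and by Lemma \ref{le:feq} (condition (iv) applied to $E$, together with $E\le F$ forcing $F(a_1,\cdot)=F(a_1',\cdot)$) it follows that $F(a_1,a_2)=F(a_1',a_2')$ whenever $E_{a_1}=E_{a_1'}$ and $E_{a_2}=E_{a_2'}$. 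Reflexivity and symmetry of $F/E$ are immediate from those of $F$. Transitivity requires showing $F/E(E_{a_1},E_{a_2})\otimes F/E(E_{a_2},E_{a_3})\le F/E(E_{a_1},E_{a_3})$, which unwinds to $F(a_1,a_2)\otimes F(a_2,a_3)\le F(a_1,a_3)$, i.e.\ the transitivity of $F$ itself.

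The substance of the theorem is the isomorphism. I would define $\chi:(A/E)/(F/E)\to A/F$ by sending the $(F/E)$-class of $E_a$ to the $F$-class $F_a$, that is $\chi\bigl((F/E)_{E_a}\bigr)=F_a$. First I would check this is well-defined and injective simultaneously: by Lemma \ref{le:feq}, $(F/E)_{E_{a_1}}=(F/E)_{E_{a_2}}$ holds if and only if $F/E(E_{a_1},E_{a_2})=1$, which by \eqref{eq:G:E} means $F(a_1,a_2)=1$, which again by Lemma \ref{le:feq} is equivalent to $F_{a_1}=F_{a_2}$. Surjectivity is clear since every $F_a$ is hit by $(F/E)_{E_a}$. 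Thus $\chi$ is a bijection between the two underlying sets.

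It remains to verify that $\chi$ preserves the fuzzy relations, i.e.\ that the relation $(V_i^{A/E})^{(A/E)/(F/E)}$ on the source corresponds under $\chi$ to $V_i^{A/F}$ on the target. Writing $\bar E=F/E$ for brevity, the source relation evaluated at the classes of $E_{a_1},E_{a_2}$ is $(\bar E\circ V_i^{A/E}\circ \bar E)(E_{a_1},E_{a_2})$, while the target relation is $V_i^{A/F}(F_{a_1},F_{a_2})=(F\circ V_i\circ F)(a_1,a_2)$. The key computation is to expand the triple composition on $A/E$: using the defining formula \eqref{eq:quot.syst} for $V_i^{A/E}$ and \eqref{eq:G:E} for $\bar E$, and summing over representatives, the composition $\bar E\circ V_i^{A/E}\circ \bar E$ should collapse to $F\circ E\circ V_i\circ E\circ F$ read on $A$; then since $E\le F$ and $F\circ F=F$ (as $F$ is reflexive and transitive, so idempotent by the remark ``If $E$ is reflexive and transitive, then $E\circ E=E$''), this simplifies to $F\circ V_i\circ F$, matching the target. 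This absorption step $F\circ E=E\circ F=F$, which holds precisely because $E\le F$ gives both $F\le F\circ E$ and $F\circ E\le F\circ F=F$, is where the hypothesis $E\le F$ does its real work.

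\textbf{The main obstacle} I expect is the bookkeeping in that composition identity: one must justify carefully that pushing the quotient relations on $A/E$ back down to representatives in $A$ produces exactly $F\circ E\circ V_i\circ E\circ F$ without spurious suprema, using well-definedness at each layer. Everything else is routine verification of the equivalence axioms and of bijectivity via Lemma \ref{le:feq}. Once the composition collapses as described, preservation of each $V_i$ under $\chi$ is immediate, and $\chi$ is the desired isomorphism of fuzzy relational systems.
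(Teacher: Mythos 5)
Your proposal is correct and follows essentially the same route as the paper's proof: well-definedness of $F/E$ from $E\le F$, bijectivity of the class map via the chain of equivalences through $F(a_1,a_2)=1$, and the key collapse $(F/E)\circ V_i^{A/E}\circ (F/E)\leadsto F\circ E\circ V_i\circ E\circ F=F\circ V_i\circ F$ using $F\circ E=E\circ F=F$. The only cosmetic difference is that you orient the isomorphism from $({\cal A}/E)/(F/E)$ to ${\cal A}/F$ rather than the reverse, which changes nothing.
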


\begin{proof}
First we note that $F/E$ is a well-defined fuzzy relation.~Indeed, if $a_1,a_1',a_2,a_2'\in A$ such that
$E_{a_1}=E_{a_1'}$ and $E_{a_2}=E_{a_2'}$, then $E(a_1,a_1')=1=E(a_2,a_2')$, so $F(a_1,a_1')=1=F(a_2,a_2')$, and hence,
$F/E(E_{a_1},E_{a_1'})=F/E(E_{a_2},E_{a_2'})$. It is easy to verify that $F/E$ is a fuzzy equivalence.

For the sake of simplicity set $Q=F/E$, and define a function $\phi :A/G\to (A/E)/Q$ by $\phi (F_a)=Q_{E_a}$, for each $a\in A$.~For arbitrary $a_1,a_2\in A$ we have that
\[
F_{a_1}=F_{a_2}\ \ \iff\ \ F(a_1,a_2)=1\ \ \iff\ \ F/E(E_{a_1},E_{a_2})=1\ \ \iff\ \ Q(E_{a_1},E_{a_2})=1\ \ \iff\ \ Q_{E_{a_1}}=Q_{E_{a_2}},
\]
and thus, $\phi $ is a well-defined and injective function.~It is also clear that $\phi $ is a surjective function.

Furthermore, $E\le F$ yields $F\circ E=E\circ F=F$, and for arbitrary $a_1,a_2\in A$ and $i\in I$ we have that
\[
\begin{aligned}
V_i^{(A/E)/Q}(\phi(F_{a_1}),\phi(F_{a_2}))&= V_i^{(A/E)/Q}(Q_{E_{a_1}},Q_{E_{a_2}})=(Q\circ V_i^{A/E}\circ Q)(E_{a_1},E_{a_2}) \\
&=\bigvee_{a_3,a_4\in A}Q(E_{a_1},E_{a_3})\otimes V_i^{A/E}(E_{a_3},E_{a_4})\otimes Q(E_{a_4},E_{a_2}) \\
&=\bigvee_{a_3,a_4\in A}F({a_1},{a_3})\otimes (E\circ V_i\circ E)({a_3},{a_4})\otimes F({a_4},{a_2}) \\
&= (F\circ E\circ V_i\circ E\circ F)({a_1},{a_2}) = (F\circ V_i\circ F)({a_1},{a_2})=V_i^{A/G}(F_{a_1},F_{a_2}),
\end{aligned}
\]
so we have proved that $\phi $ is an isomorphism of fuzzy relational systems $({\cal A}/E)/(F/E)$
and ${\cal A}/F$.
\end{proof}

We also prove an analogue of the Correspondence Theorem from universal algebra (cf.~\cite[\S\,2.6]{BS.81}).

\begin{theorem}\label{th:emb}
Let ${\cal A}=(A,I,V_i)$ be a fuzzy relational system and let $E$ be fuzzy equivalence on $A$.

The function $\Phi :{\cal E}_E(A)\to {\cal E}(A/E)$, where ${\cal E}_E=\{F\in {\cal E}(A)\mid E\subseteq F\}$, defined by
\begin{equation}\label{eq:emb1}
\Phi (F)=F/E, \quad\text{for all}\ F\in {\cal E}_E(A),
\end{equation}
is an order embedding of ${\cal E}_E(A)$ into ${\cal E}(A/E)$, i.e.,
\begin{equation}\label{eq:emb2}
F\le G \ \ \iff\ \ \Phi(F)\le \Phi (G), \quad \text{for all}\ F,G\in {\cal E}_E(A).
\end{equation}
\end{theorem}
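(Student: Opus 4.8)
The plan is to verify two things in turn: first that $\Phi$ genuinely maps into ${\cal E}(A/E)$ (well-definedness), and then that the biconditional (\ref{eq:emb2}) holds, from which the order-embedding property follows immediately. For well-definedness I would observe that for any $F\in {\cal E}_E(A)$ we have $E\le F$ by the definition of ${\cal E}_E(A)$, so the hypotheses of Theorem \ref{th:G:E} are satisfied with this $F$ in place of the generic $F$ there. That theorem already establishes that $F/E$, defined by (\ref{eq:G:E}), is a well-defined fuzzy equivalence on $A/E$; in particular the assignment $(E_{a_1},E_{a_2})\mapsto F(a_1,a_2)$ is independent of the chosen representatives. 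Hence $\Phi(F)=F/E\in {\cal E}(A/E)$ for every $F\in {\cal E}_E(A)$, and $\Phi$ is a bona fide map between the two lattices.

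The heart of the matter is the equivalence (\ref{eq:emb2}), and here the key observation is that every element of the factor set $A/E$ has the form $E_a$ for some $a\in A$, so any pair of elements of $A/E$ can be written as $(E_{a_1},E_{a_2})$ with $a_1,a_2\in A$. Consequently, comparing the fuzzy equivalences $\Phi(F)=F/E$ and $\Phi(G)=G/E$ pointwise on $A/E\times A/E$ amounts to comparing the values $(F/E)(E_{a_1},E_{a_2})$ and $(G/E)(E_{a_1},E_{a_2})$ for all $a_1,a_2\in A$, and by the defining formula (\ref{eq:G:E}) these are exactly $F(a_1,a_2)$ and $G(a_1,a_2)$.

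Thus for the forward implication, $F\le G$ gives $F(a_1,a_2)\le G(a_1,a_2)$ for all $a_1,a_2\in A$, whence $(F/E)(E_{a_1},E_{a_2})\le (G/E)(E_{a_1},E_{a_2})$ for every such pair, i.e. $\Phi(F)\le \Phi(G)$. Conversely, $\Phi(F)\le \Phi(G)$ yields $F(a_1,a_2)=(F/E)(E_{a_1},E_{a_2})\le (G/E)(E_{a_1},E_{a_2})=G(a_1,a_2)$ for all $a_1,a_2\in A$, which is precisely $F\le G$. This establishes (\ref{eq:emb2}); injectivity of $\Phi$ is then a formal consequence, since $\Phi(F)=\Phi(G)$ forces $\Phi(F)\le \Phi(G)$ and $\Phi(G)\le \Phi(F)$, hence $F\le G$ and $G\le F$, so $F=G$. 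A map that both preserves and reflects the order is by definition an order embedding.

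I expect no substantive obstacle here: once (\ref{eq:G:E}) is in place, the values of $F/E$ simply re-encode those of $F$, so the order on $A/E$ is transported verbatim from the order on $A$. The single point demanding genuine care is the representative-independence of $\Phi(F)$ for each $F$ in the domain, and this has already been discharged in the proof of Theorem \ref{th:G:E} using $E\le F$; invoking it is what makes the remaining argument a direct reading off of the definition.
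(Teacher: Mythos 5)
Your proof is correct and follows essentially the same route as the paper: the paper likewise obtains (\ref{eq:emb2}) by the chain of pointwise equivalences $F\le G \iff (\forall a_1,a_2\in A)\,F(a_1,a_2)\le G(a_1,a_2) \iff (\forall a_1,a_2\in A)\,\Phi(F)(E_{a_1},E_{a_2})\le \Phi(G)(E_{a_1},E_{a_2}) \iff \Phi(F)\le\Phi(G)$, relying on Theorem \ref{th:G:E} for well-definedness of $F/E$. Your additional remarks on surjectivity of $a\mapsto E_a$ and on injectivity of $\Phi$ only make explicit what the paper leaves implicit.
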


\begin{proof}
For arbitrary $F,G\in {\cal E}_E(A)$ we have that
\[
\begin{aligned}
F\le G\ \ &\iff\ \ (\forall a_1,a_2\in A)\ F(a_1,a_2)\le G(a_1,a_2)\\
&\iff\ \ (\forall a_1,a_2\in A)\ \Phi(F)(E_{a_1},E_{a_2})\le \Phi(G)(E_{a_1},E_{a_2})\ \ \iff\ \ \Phi(F)\le \Phi(G),
\end{aligned}
\]
and hence, $\Phi $ is an order embedding of ${\cal E}_E(A)$ into ${\cal E}(A/E)$.
\end{proof}

It is worth noting that in the case of Boolean (crisp) relational systems $\Phi $ is also surjective, which means that it is an order isomorphism, and equivalently, a lattice isomorphism of ${\cal E}_E(A)$ into ${\cal E}(A/E)$.~In the case of fuzzy relational systems we are not able to prove that fact, but this is not so important because in practice we usually use just the fact that $\Phi $ is an order embedding.

The following theorem will be also very useful in our further work.

\begin{theorem}\label{th:F.E}
Let ${\cal A}=(A,I,V_i)$ be a fuzzy relational system, let $E$ and $F$ be fuzzy equivalences on $A$ such that~$E\leqslant F$, and let ${\cal A}/E=(A/E,I,V^{A/E}_i)$ be the quotient fuzzy relational system of $\cal A$ with respect to $E$.

A fuzzy relation $F_{E}\in {\cal R}(A,A/E)$ defined by
\begin{equation}\label{eq:F:E.nat}
F_E(a_1,E_{a_2})=F(a_1,a_2),\ \ \ \text{for all $a_1,a_2\in A$,}
\end{equation}
is a uniform fuzzy relation with the kernel $F$ and the co-kernel $F/E$.

In addition, if $E$ is a solution to $WL^{\text{1-4}}(A,I,V_i,W)$, for some
$W\in {\cal R}(A)$,
then the following is true:
\begin{itemize}\parskip=0pt
\item[{\rm (a)}] $F$ is a solution to $WL^{\text{1-4}}(A,I,V_i,W)$ if and only if $F/E$ is a solution to $WL^{\text{1-4}}(A/E,I,V_i^{A/E},W/E)$.
\item[{\rm (b)}] $F$ is the greatest solution to  system $WL^{\text{1-4}}(A,I,V_i,W)$ if and only if $F/E$ is the greatest solution to  system $WL^{\text{1-4}}(A/E,I,V_i^{A/E},W/E)$.
\item[{\rm (c)}] $F$ is a solution to $WL^{\text{1-4}}(A,I,V_i,W)$ if and only if $F_E$ is a solution to  $WL^{\text{2-3}}(A,A/E,I,V_i,V^{A/E}_i,W_{E})$.
\end{itemize}
\end{theorem}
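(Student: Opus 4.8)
The plan is to dispose of the structural assertion about $F_E$ first, and then to reduce each of (a), (b), (c) to an elementary identity between compositions of fuzzy relations on $A$, using repeatedly that $E\circ E=E$ and that, since $E\le F$, $E\circ F=F\circ E=F$ (and likewise $E\le W$, which makes $W/E$ and $W_E$ well-defined, exactly as $F/E$ and $F_E$ are). For the first assertion I would check well-definedness as for $F/E$: if $E_{a_2}=E_{a_2'}$ then $E(a_2,a_2')=1$, hence $F(a_2,a_2')=1$ and $F(a_1,a_2)=F(a_1,a_2')$ by Lemma \ref{le:feq}. Writing $F_E^{-1}(E_a,a')=F(a,a')$ and observing that in each relevant supremum the summands are constant on $E$-classes (so the join over $A/E$ may be replaced by the join over $A$), a direct computation gives $F_E\circ F_E^{-1}=F$ and $F_E^{-1}\circ F_E=F/E$; taking the representative $a$ in the class $E_a$ shows that $F_E$ is a surjective $\cal L$-function. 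Since the induced kernel of a fuzzy equivalence is itself, a short computation (again replacing the meet over $A/E$ by the meet over $A$) yields $E_A^{F_E}=E_A^F=F=F_E\circ F_E^{-1}$, so Theorem \ref{th:ufr}(iv) gives that $F_E$ is uniform, with kernel $F$ and co-kernel $E_{A/E}^{F_E}=F/E$. (Equivalently, one notes $F_E=F\circ E^\natural$.)

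Throughout the rest I use the reduction, recorded after the definition of the homogeneous systems, that for a symmetric relation ``solving $WL^{\text{1-4}}$'' means exactly $F\circ V_i\le V_i\circ F$ for all $i$ together with $F\le W$; the same holds for the fuzzy equivalence $F/E$ on $A/E$. For (a), evaluating at $(E_{a_1},E_{a_2})$ I get
\[
((F/E)\circ V_i^{A/E})(E_{a_1},E_{a_2})=(F\circ V_i\circ E)(a_1,a_2),\qquad (V_i^{A/E}\circ (F/E))(E_{a_1},E_{a_2})=(E\circ V_i\circ F)(a_1,a_2),
\]
so the $i$-th quotient inclusion reads $F\circ V_i\circ E\le E\circ V_i\circ F$. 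This is equivalent to $F\circ V_i\le V_i\circ F$: the forward implication uses reflexivity of $E$ (giving $V_i\circ F\le E\circ V_i\circ F$) and $F\circ E=F$, while the converse chains
\[
F\circ V_i=F\circ E\circ V_i\le F\circ V_i\circ E\le E\circ V_i\circ F\le V_i\circ E\circ F=V_i\circ F,
\]
which is exactly where the hypothesis $E\circ V_i\le V_i\circ E$ (i.e. ``$E$ solves the system'') is needed. Since $F\le W\iff F/E\le W/E$ holds pointwise, (a) follows.

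Part (c) is handled by the same bookkeeping on the heterogeneous compositions: one finds that $WL^{\text{2-3}}$ for $F_E$ becomes the pair
\[
F\circ V_i\le E\circ V_i\circ F,\qquad F\circ V_i\circ E\le V_i\circ F\qquad(i\in I),
\]
together with $F\le W$ (from $F_E\le W_E$). Using reflexivity of $E$ and $F\circ E=F$ one checks that this pair is equivalent to $F\circ V_i\le V_i\circ F$ (indeed the second inclusion alone gives the converse, via $F\circ V_i\le F\circ V_i\circ E$), so by the reduction this is precisely (c).

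Finally, (b) follows from (a) by lifting, and this is the step I expect to be delicate. Let $F^\ast$ be the greatest solution of $WL^{\text{1-4}}(A,I,V_i,W)$; since solutions are closed under joins it dominates the solution $E$, so $E\le F^\ast$, and by (a) the fuzzy equivalence $F^\ast/E$ solves the quotient system, whence $F^\ast/E\le G^\ast$ for the greatest quotient solution $G^\ast$. For the reverse inequality I apply (a) to $F=E$ to see that $\widetilde E=E/E$ also solves the quotient system, so $G^\ast\ge\widetilde E$. Setting $H^\ast(a_1,a_2)=G^\ast(E_{a_1},E_{a_2})$, this is a fuzzy equivalence with $H^\ast/E=G^\ast$, and the bound $G^\ast\ge\widetilde E$ is exactly what guarantees $E\le H^\ast$; hence $H^\ast\in{\cal E}_E(A)$, and by (a) $H^\ast$ solves the system on $A$. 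Then $H^\ast\le F^\ast$, so $G^\ast=H^\ast/E\le F^\ast/E$, giving $F^\ast/E=G^\ast$. The crux is precisely the identity $E\le H^\ast$: without first noting that $\widetilde E$ is a quotient solution, the lift of $G^\ast$ need not contain $E$, and neither (a) nor the order embedding of Theorem \ref{th:emb} could be applied to it.
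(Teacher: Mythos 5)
Your proposal is correct and follows essentially the same route as the paper's proof: the uniformity of $F_E$ is verified by computing $F_E\circ F_E^{-1}=F$ and $F_E^{-1}\circ F_E=F/E$ and invoking Theorem~\ref{th:ufr}; parts (a) and (c) are reduced, via $E\circ F=F\circ E=F$ and the hypothesis $E\circ V_i\le V_i\circ E$, to the same composition inequalities $F\circ V_i\circ E\le E\circ V_i\circ F$ and $F\circ V_i\le E\circ V_i\circ F$, $F\circ V_i\circ E\le V_i\circ F$ that the paper derives; and part (b) uses the identical lifting of the greatest quotient solution, with $E/E\le G^\ast$ securing $E\le H^\ast$ exactly as in the paper. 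The only cosmetic differences are your use of Theorem~\ref{th:ufr}(iv) (via $E_A^{F_E}=F$) instead of condition (iii), and your packaging of the converse half of (b) through the injectivity of the order embedding of Theorem~\ref{th:emb} rather than a separate maximality argument.
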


\begin{proof}
For the sake of simplicity set $F_E=G$.~For arbitrary $a_1,a_2\in A$ we can easily check that
\[
\begin{aligned}
&(G\circ G^{-1}\circ G)(a_1,E_{a_2})=(F\circ F^{-1}\circ F)(a_1,{a_2})=F(a_1,a_2)=G(a_1,E_{a_2}), \\
&(G\circ G^{-1})(a_1,{a_2})=(F\circ F^{-1})(a_1,{a_2})=F(a_1,a_2), \\
&(G^{-1}\circ G)(E_{a_1},E_{a_2})=(F^{-1}\circ F)(a_1,{a_2})=F(a_1,a_2)=F/E(E_{a_1},E_{a_2}),
\end{aligned}
\]
which means that $G\circ G^{-1}\circ G=G$, $G\circ G^{-1}=F$ and $G^{-1}\circ G=F/E$.~Therefore, $G$ is a uniform fuzzy relation with the kernel $F$ and the co-kernel $F/E$.

Next, let $E$ be a solution to $WL^{1-4}(A,I,V_i)$.

(a) According to Theorem \ref{th:emb}, $F\le W$ if and only if $F/E\le W/E$.~Furthermore,
since $E\le F$ is equivalent to $E\circ F=F\circ E=F$, for arbitrary $a_1,a_2\in A$ and $i\in I$ we have that
\[
\begin{aligned}
&(F/E)\circ V_i^{A/E}(E_{a_1},E_{a_2})=(F\circ E\circ V_i\circ E)(a_1,a_2)= (F\circ V_i\circ E)(a_1,a_2), \\
&V_i^{A/E}\circ (F/E)(E_{a_1},E_{a_2})=(E\circ V_i\circ E\circ F)(a_1,a_2)= (E\circ V_i\circ F)(a_1,a_2),
\end{aligned}
\]
and as $F/E$ is symmetric, then it is  a solution to $WL^{\text{1-4}}(A/E,I,V_i^{A/E},W/E)$ if and only if
\begin{equation}\label{eq:F.E}
F\circ V_i\circ E\le E\circ V_i\circ F,\quad F\le W
\end{equation}
for each $i\in I$.~Therefore, it remains to prove that $F$ is a solution to $WL^{\text{1-4}}(A,I,V_i,W)$ if and only if (\ref{eq:F.E})~holds. Bearing in mind that $E$ is a solution to $WL^{\text{1-4}}(A,I,V_i,W)$, if $F$ is also a solution to this system, then $F\le W$ and $F\circ V_i\circ E\le V_i\circ F\circ E=V_i\circ F$ and $E\circ V_i\circ F\le V_i\circ E\circ F=V_i\circ F$, so we have that (\ref{eq:F.E}) is true.~Conversely, let (\ref{eq:F.E}) hold.~Then
\[
F\circ V_i\le F\circ V_i\circ E\le E\circ V_i\circ F\le V_i\circ E\circ F=V_i\circ F,
\]
for each $i\in I$, which means that $F$ is a solution to $WL^{\text{1-1}}(A,I,V_i,W)$.

(b) Let $F$ be the greatest solution to  the system $WL^{\text{1-4}}(A,I,V_i,W)$.~Assume
that $Q$ is the greatest solution to $WL^{\text{1-4}}(A/E,I,V_i^{A/E},W/E)$, and define a fuzzy relation $G$ on $A$ as follows:
\[
G(a_1,a_2)=Q(E_{a_1},E_{a_2}), \quad \text{for all}\ a_1,a_2\in A.
\]
It is easy to check that $G$ is a fuzzy equivalence on $A$.~According to the assertion (a) of this theorem, $E/E$ is a solution to $WL^{\text{1-4}}(A/E,I,V_i^{A/E},W/E)$, so $E/E\le Q$.~Now, for arbitrary $a_1,a_2\in A$ we have that
\[
E(a_1,a_2)=E/E(E_{a_1},E_{a_2})\le Q(E_{a_1},E_{a_2})=G(a_1,a_2),
\]
which means that $E\le G$, and consequently, $Q=G/E$.~Next, by the assertion (a) of this theorem we obtain that $G$ is a solution to $WL^{\text{1-4}}(A,I,V_i,W)$, and since $F$ is the greatest solution to this system, then $G\le F$. According to Theorem \ref{th:emb}, $Q=G/E\le F/E$, and since $F/E$ is a solution to $WL^{\text{1-4}}(A/E,I,V_i^{A/E},W/E)$ and~$Q$ is the greatest solution to this system, we have that $Q=F/E$, i.e., $F/E$ is the greatest solution to $WL^{\text{1-4}}(A/E,I,V_i^{A/E},W/E)$.

Conversely, let $F/E$ be the greatest solution to $WL^{\text{1-4}}(A/E,I,V_i^{A/E},W/E)$.~According to (a), $F$ is a solution to the system $WL^{\text{1-4}}(A,I,V_i,W)$.~Let $G$ be the greatest  solution to $WL^{\text{1-4}}(A,I,V_i,W)$.
By Theorem 4.5 \cite{ICB.10},~$G$ is a fuzzy equivalence, and we have that $E\le F\le G$.~Next, by (a) we obtain that
$G/E$ is a solution to $WL^{\text{1-4}}(A/E,I,V_i^{A/E},W/E)$, so $G/E\le F/E$.~But, now by Theorem \ref{th:emb} it follows that $G\le F$, i.e., $G=F$, so we have proved that $F$ is the greatest solution to $WL^{\text{1-4}}(A,I,V_i,W)$.

(c) For arbitrary $a_1,a_2\in A$ and $i\in I$ we have that
\[
\begin{aligned}
&(F_E^{-1}\circ V_i)(E_{a_1},a_2)=(F\circ V_i)(a_1,a_2), \ \
(V_i^{A/E}\circ F_E^{-1})(E_{a_1},a_2)=(E\circ V_i\circ E\circ F)(a_1,a_2)=(E\circ V_i\circ F)(a_1,a_2),\\
&(F_E\circ V_i^{A/E})(a_1,E_{a_2}) =(F\circ E\circ V_i\circ E)(a_1,a_2)=(F\circ V_i\circ E)(a_1,a_2),\ \ (V_i\circ F_E)(a_1,E_{a_2})=(V_i\circ F)(a_1,a_2),
\end{aligned}
\]
so $F_E$ is a solution to  $WL^{\text{2-3}}(A,A/E,I,V_i,V^{A/E}_i,W_{E})$ if and only if
$F\circ V_i\le E\circ V_i\circ F$ and $F\circ V_i\circ E\le V_i\circ F$, for each $i\in I$, and $F\le W$.~It is easy to verify that $F\circ V_i\circ E\le V_i\circ F$ is equivalent to $F\circ V_i\le V_i\circ F$ even if $E$ is not a solution to $WL^{1-4}(A,I,V_i,W)$ (using only reflexivity of $E$ and the equality $F\circ E=F$).~On the other hand, under assumption that $E$ is a solution to $WL^{1-4}(A,I,V_i,W)$ we obtain that $F\circ V_i\le E\circ V_i\circ F$ is also equivalent to $F\circ V_i\le V_i\circ F$.~Thus, we have proved that (c) is true.
\end{proof}

\section{Relationships between solutions to heterogeneous and homogeneous weakly linear systems}\label{sec:relat}

In this section we determine the relationships between solutions to heterogeneous and homogeneous weakly linear systems.~In particular, we show that the kernel and the co-kernel of a solution to a heterogeneous weakly linear system are solutions to
related homogeneous systems, and we establish the connection between the greatest solutions to a heterogeneous systems and the related homogeneous systems.

First we prove the following.

\begin{proposition}\label{prop:RR-1}
Let a fuzzy relation $R\in {\cal R}(A,B)$ be a solution to system $WL^{\text{2-3}}(A,B,I,V_i,W_i,Z)$.~Then
\begin{itemize}\parskip0pt
\item[{\rm (a)}] $R\circ R^{-1}$ is a solution to system $WL^{\text{1-4}}(A,I,V_i,Z\circ Z^{-1})$;
\item[{\rm (b)}] $R^{-1}\circ R$ is a solution to system $WL^{\text{1-4}}(B,I,W_i,Z^{-1}\circ Z)$.
\end{itemize}
\end{proposition}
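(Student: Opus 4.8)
The plan is to reduce both statements to the three defining inequalities of a solution to $WL^{\text{2-3}}$, namely
\[
R^{-1}\circ V_i\le W_i\circ R^{-1},\qquad R\circ W_i\le V_i\circ R\quad (i\in I),\qquad R\le Z,
\]
and then to verify the defining conditions of the target homogeneous system $WL^{\text{1-4}}$ by elementary chaining of compositions. The first thing I would record is that $E:=R\circ R^{-1}$ and $F:=R^{-1}\circ R$ are symmetric: by (\ref{eq:comp.inv}) one has $(R\circ R^{-1})^{-1}=(R^{-1})^{-1}\circ R^{-1}=R\circ R^{-1}$ and likewise $(R^{-1}\circ R)^{-1}=R^{-1}\circ R$. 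Since a system of type $WL^{\text{1-4}}$ requires $U\circ V_i\le V_i\circ U$, $U^{-1}\circ V_i\le V_i\circ U^{-1}$ together with $U\le W$ and $U^{-1}\le W$, symmetry collapses the two ``$U^{-1}$'' requirements onto the two ``$U$'' requirements, so in each part it suffices to verify a single commutation inequality and a single inclusion.

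For part (a) the inclusion $R\circ R^{-1}\le Z\circ Z^{-1}$ is immediate from $R\le Z$: by (\ref{eq:comp.mon}) this gives $R^{-1}\le Z^{-1}$, and monotonicity of composition then yields $R\circ R^{-1}\le Z\circ Z^{-1}$. For the commutation inequality I would compose the first defining inequality on the left by $R$ and the second on the right by $R^{-1}$, using associativity (\ref{eq:comp.as}) and (\ref{eq:comp.mon}), to obtain
\[
R\circ R^{-1}\circ V_i\le R\circ W_i\circ R^{-1}\le V_i\circ R\circ R^{-1}.
\]
Together with the symmetry of $E$ this gives all four conditions of $WL^{\text{1-4}}(A,I,V_i,Z\circ Z^{-1})$.

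Part (b) is the mirror image. The inclusion $R^{-1}\circ R\le Z^{-1}\circ Z$ again follows from $R\le Z$, and for the commutation inequality I would now compose the second defining inequality on the left by $R^{-1}$ and the first on the right by $R$, giving
\[
R^{-1}\circ R\circ W_i\le R^{-1}\circ V_i\circ R\le W_i\circ R^{-1}\circ R,
\]
so, using the symmetry of $F$, the relation $F=R^{-1}\circ R$ solves $WL^{\text{1-4}}(B,I,W_i,Z^{-1}\circ Z)$.

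I do not anticipate a genuine obstacle here; the whole argument is bookkeeping with associativity and the monotonicity of composition and converse. The only point that demands care is orchestrating the two defining inequalities in the right order and on the right side, so that the intermediate term $R\circ W_i\circ R^{-1}$ (respectively $R^{-1}\circ V_i\circ R$) is genuinely sandwiched between the desired end terms; composing on the wrong side would produce inequalities pointing the wrong way. As an alternative that sidesteps the explicit chaining, I note that $R\le Z$ forces $R^{-1}$ to solve $WL^{\text{2-3}}(B,A,I,W_i,V_i,Z^{-1})$, whence Proposition \ref{prop:comp} shows $R\circ R^{-1}$ solves $WL^{\text{2-3}}(A,A,I,V_i,V_i,Z\circ Z^{-1})$; the inequalities of this system coincide with those of $WL^{\text{1-4}}$ once the symmetry of $R\circ R^{-1}$ supplies the remaining $U^{-1}\le Z\circ Z^{-1}$ clause, and part (b) follows by the symmetric application of Proposition \ref{prop:comp} to $R^{-1}\circ R$.
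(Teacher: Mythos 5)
Your argument is correct and is essentially the paper's own proof: the same two chains $R\circ R^{-1}\circ V_i\le R\circ W_i\circ R^{-1}\le V_i\circ R\circ R^{-1}$ and $R^{-1}\circ R\circ W_i\le R^{-1}\circ V_i\circ R\le W_i\circ R^{-1}\circ R$, the same monotonicity step $R\le Z\implies R\circ R^{-1}\le Z\circ Z^{-1}$, and the same appeal to symmetry of $R\circ R^{-1}$ and $R^{-1}\circ R$ to collapse the $U^{-1}$ clauses of $WL^{\text{1-4}}$. The alternative route via Proposition \ref{prop:comp} that you sketch at the end is also valid, but it is only a repackaging of the same composition inequalities.
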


\begin{proof}
For each $i\in I$, by $R^{-1}\circ V_i\le W_i\circ R^{-1}$ and $R\circ W_i\le V_i\circ R$ it follows that
\[
R\circ R^{-1}\circ V_i\le R\circ W_i\circ R^{-1}\le V_i\circ R\circ R^{-1}\quad\text{and}\quad
R^{-1}\circ R\circ W_i\le R^{-1}\circ V_i\circ R\le W_i\circ R^{-1}\circ R,
\]
and by $R\le Z$ we obtain that $R\circ R^{-1}\le Z\circ Z^{-1}$ and $R^{-1}\circ R\le Z^{-1}\circ Z$.~Since $R\circ R^{-1}$ and $R^{-1}\circ R$ are symmetric fuzzy relations, we have that  $R\circ R^{-1}$ is a solution to $WL^{\text{1-4}}(A,I,V_i,Z\circ Z^{-1})$ and $R^{-1}\circ R$ is a solution to $WL^{\text{1-4}}(B,I,W_i,Z^{-1}\circ Z)$.
\end{proof}

In the previous proposition we have considered the solution of system (\ref{eq:w23}) which is an arbitrary fuzzy relation.~In the following theorem we deal with solutions to this system which are uniform fuzzy relations.

\begin{theorem}\label{th:unif}
Let $R\in {\cal R}(A,B)$ be a uniform fuzzy relation and let $Z\in {\cal R}(A,B)$ be a fuzzy relation
such that $R\le Z$.

Then $R$ is a solution to system $WL^{\text{2-3}}(A,B,I,V_i,W_i,Z)$ if and only if the following is true:
\begin{itemize}\parskip0pt
\item[{\rm (i)}] $E_A^R$ is a solution to system $WL^{\text{1-4}}(A,I,V_i,Z\circ Z^{-1})$;
\item[{\rm (ii)}] $E_B^R$ is a solution to system $WL^{\text{1-4}}(B,I,W_i,Z^{-1}\circ Z)$;
\item[{\rm (iii)}] $\widetilde R$ is an isomorphism of quotient fuzzy relational systems ${\cal A}/E_A^R $ and ${\cal B}/E_B^R $;
\end{itemize}
where ${\cal A}=(A,I,V_i)$ and ${\cal B}=(B,I,W_i)$.
\end{theorem}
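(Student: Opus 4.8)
The plan is to fix $E=E_A^R$ and $F=E_B^R$ and to work throughout with the uniform-relation identities from Theorem~\ref{th:ufr}: since $R$ is uniform we have $E=R\circ R^{-1}$, $F=R^{-1}\circ R$ and $R\circ R^{-1}\circ R=R$, from which the four absorption rules $E\circ R=R$, $R\circ F=R$, $R^{-1}\circ E=R^{-1}$ and $F\circ R^{-1}=R^{-1}$ follow by composition and inversion. The conceptual pivot is to translate condition~(iii) into a single operational identity. Fix $\psi\in CR(R)$; by Theorem~\ref{th:ufr}(vi), $R(a,b)=F(\psi(a),b)$, and since $F$ is symmetric a direct expansion of the composition gives
\[
(R\circ W_i\circ R^{-1})(a_1,a_2)=(F\circ W_i\circ F)(\psi(a_1),\psi(a_2))
\]
for all $a_1,a_2\in A$ and $i\in I$. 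As $V_i^{A/E}(E_{a_1},E_{a_2})=(E\circ V_i\circ E)(a_1,a_2)$ and $W_i^{B/F}(F_{\psi(a_1)},F_{\psi(a_2)})=(F\circ W_i\circ F)(\psi(a_1),\psi(a_2))$ by the definition of the quotient systems, and since $\widetilde R$ is already a bijection with $\widetilde R(E_a)=F_{\psi(a)}$ by Theorem~\ref{th:Rtilde}, this shows that (iii) is equivalent to the relational equality $E\circ V_i\circ E=R\circ W_i\circ R^{-1}$ for every $i\in I$.

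For the forward implication, I would assume that $R$ solves $WL^{\text{2-3}}(A,B,I,V_i,W_i,Z)$. Then (i) and (ii) are immediate from Proposition~\ref{prop:RR-1} once we recall $E=R\circ R^{-1}$ and $F=R^{-1}\circ R$. For (iii) I would prove the target equality by two inequalities: using $R^{-1}\circ V_i\le W_i\circ R^{-1}$ together with $R^{-1}\circ R\circ R^{-1}=R^{-1}$ gives $E\circ V_i\circ E=R\circ R^{-1}\circ V_i\circ R\circ R^{-1}\le R\circ W_i\circ R^{-1}$; conversely, $R\circ W_i\le V_i\circ R$ yields $R\circ W_i\circ R^{-1}\le V_i\circ E\le E\circ V_i\circ E$, the last step by reflexivity of $E$. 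Hence $E\circ V_i\circ E=R\circ W_i\circ R^{-1}$, which is (iii) by the equivalence above.

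For the converse, I would assume (i), (ii), (iii); the inclusion $R\le Z$ is given. Post-composing the equality $E\circ V_i\circ E=R\circ W_i\circ R^{-1}$ from (iii) with $R$ and simplifying via $E\circ R=R$ and $R^{-1}\circ R=F$ gives $E\circ V_i\circ R=R\circ W_i\circ F$; then $R\circ W_i\le R\circ W_i\circ F=E\circ V_i\circ R\le V_i\circ R$, where the first inequality uses reflexivity of $F$ and the last uses $E\circ V_i\le V_i\circ E$ (which (i) supplies, $E$ being symmetric) together with $E\circ R=R$. Symmetrically, pre-composing with $R^{-1}$ and using $R^{-1}\circ E=R^{-1}$, $F\circ R^{-1}=R^{-1}$ and the inequality $F\circ W_i\le W_i\circ F$ from (ii) yields $R^{-1}\circ V_i\le W_i\circ R^{-1}$. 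These are exactly the two inequalities defining $WL^{\text{2-3}}$, so $R$ is a solution.

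The main obstacle is the bookkeeping in identifying (iii) with the operational equality $E\circ V_i\circ E=R\circ W_i\circ R^{-1}$: one must verify the pointwise identity $(R\circ W_i\circ R^{-1})(a_1,a_2)=(F\circ W_i\circ F)(\psi(a_1),\psi(a_2))$ carefully from Theorem~\ref{th:ufr}(vi) and note its independence of the chosen $\psi\in CR(R)$, so that the isomorphism statement is genuinely captured. Once this bridge is in place, both directions reduce to short manipulations governed entirely by the absorption rules $E\circ R=R$, $R\circ F=R$, $R^{-1}\circ E=R^{-1}$, $F\circ R^{-1}=R^{-1}$ and the reflexivity of $E$ and $F$.
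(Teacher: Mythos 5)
Your proof is correct. The forward direction coincides with the paper's: (i) and (ii) from Proposition \ref{prop:RR-1}, and the two-sided estimate giving $E\circ V_i\circ E=R\circ W_i\circ R^{-1}$, which is then read off pointwise as the isomorphism property of $\widetilde R$ via $R(a,b)=F(\psi(a),b)$. Where you genuinely diverge is in making the identification of (iii) with the single relational identity $E\circ V_i\circ E=R\circ W_i\circ R^{-1}$ the organizing principle of the whole argument, and in particular in the converse direction. The paper proves the converse by a pointwise computation: it fixes $\varphi\in CR(R)$ and $\psi\in CR(R^{-1})$, derives $(E\circ V_i\circ E)(a_1,a_2)=(F\circ W_i\circ F)(\varphi(a_1),\varphi(a_2))$ and its dual, and then estimates $(R^{-1}\circ V_i)(b,a)$ through a chain of suprema, including the slightly delicate step $F(\varphi(\psi(b)),b_1)=F(b,b_1)$. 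You instead post- and pre-compose the relational identity with $R$ and $R^{-1}$, and finish with the absorption rules $E\circ R=R$, $F\circ R^{-1}=R^{-1}$ together with $E\circ V_i\le V_i\circ E$ from (i) and $F\circ W_i\le W_i\circ F$ from (ii). All the individual steps check out (the passage from $E\circ V_i\circ E=R\circ W_i\circ R^{-1}$ to $E\circ V_i\circ R=R\circ W_i\circ F$ and to $R^{-1}\circ V_i\circ E=F\circ W_i\circ R^{-1}$ is exactly right, and the independence of the choice of $\psi$ is guaranteed by Theorem \ref{th:ufr}(vi)). What your route buys is a shorter, coordinate-free converse that avoids juggling two crisp descriptions; what the paper's route buys is that it never needs to isolate the relational form of (iii) explicitly, working directly with the quotient relations as the definition presents them.
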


\begin{proof}
For the sake of simplicity set $E_A^R=E$, $E_B^R=F$ and $\widetilde R=\phi $.~By uniformity of $R$ we have that $E=R\circ R^{-1}$ and $F=R^{-1}\circ R$.

Let $R$ be a solution to $WL^{\text{2-3}}(A,B,I,V_i,W_i,Z)$.~By Proposition \ref{prop:RR-1}, (i) and (ii) hold.~According to Theorem \ref{th:Rtilde}, $\phi $ is a bijective function of $A/E$ onto $B/F$. Moreover, for an arbitrary $i\in I$ we have that
\[
\begin{aligned}
E\circ V_i\circ E&=R\circ R^{-1}\circ V_i\circ R\circ R^{-1}\le R\circ W_i\circ R^{-1}\circ R\circ R^{-1} = R\circ W_i\circ R^{-1}\\
&= R\circ W_i\circ R^{-1} = R\circ R^{-1}\circ R\circ W_i\circ R^{-1} \le  R\circ R^{-1}\circ V_i\circ R\circ R^{-1} = E\circ V_i\circ E,
\end{aligned}
\]
and hence, $E\circ V_i\circ E=R\circ W_i\circ R^{-1}$.

Further, for arbitrary $a_1,a_2\in A$, $i\in I$ and $\psi\in CR(R)$ we have that
\[
\begin{aligned}
V_i^{A/E}(E_{a_1},E_{a_2})&= (E\circ V_i\circ E)(a_1,a_2)=(R\circ W_i\circ R^{-1})(a_1,a_2) \\
&=\bigvee_{b_1,b_2\in B}R(a_1,b_1)\otimes W_i(b_1,b_2)\otimes R^{-1}(b_1,a_2)
=\bigvee_{b_1,b_2\in B}F(\psi(a_1),b_1)\otimes W_i(b_1,b_2)\otimes F(b_1,\psi(a_2))\\
&=(F\circ W_i\circ F)(\psi(a_1),\psi(a_2))=W_i^{B/F}(F_{\psi(a_1)},F_{\psi(a_2)})= W_i^{B/F}(\phi(E_{a_1}),\phi(E_{a_2})).
\end{aligned}
\]
Thus, $\phi $ is an isomorphism of fuzzy relational systems ${\cal A}/E$ and ${\cal B}/F$.

Conversely, let (i), (ii) and (iii) hold.~Consider arbitrary $\varphi \in CR(R)$, $\psi\in CR(R^{-1})$, $a_1,a_2\in A$, $b_1,b_2\in B$ and $i\in I$.~Then we have that
\[
(E\circ V_i\circ  E)(a_1,a_2)=V_i^{A/E}(E_{a_1},E_{a_2})=W_i^{B/F}(\phi(E_{a_1}),\phi(E_{a_2})) =W_i^{B/F}(F_{\varphi(a_1)},F_{\varphi(a_2)})= (F\circ W_i\circ  F)(\varphi(a_1),\varphi(a_2)),
\]
and similarly,
\[
(F\circ W_i\circ  F)(b_1,b_2)=(E\circ V_i\circ  E)(\psi(b_1),\psi(b_2)).
\]
Now, for arbitrary $a\in A$, $b\in B$, $i\in I$, $\varphi \in CR(R)$ and $\psi\in CR(R^{-1})$ we have that
\[
\begin{aligned}
&(R^{-1}\circ V_i)(b,a)=(R^{-1}\circ E\circ V_i)(b,a)\le (R^{-1}\circ V_i\circ E)(b,a) = \bigvee_{a_1,a_2\in A}R^{-1}(b,a_1)\otimes V_i(a_1,a_2)\otimes E(a_2,a) \\
&\qquad\qquad= \bigvee_{a_1,a_2\in A}E(\psi(b),a_1)\otimes V_i(a_1,a_2)\otimes E(a_2,a) = (E\circ V_i\circ E)(\psi(b),a)= (F\circ W_i\circ F)(\varphi(\psi(b)),\varphi(a)) \\
&\qquad\qquad= \bigvee_{b_1,b_2\in B}F(\varphi(\psi(b)),b_1)\otimes W_i(b_1,b_2)\otimes F(b_2,\varphi(a)),
\end{aligned}
\]
and since $F(\varphi(\psi(b)),b)=R(\psi(b),b)=R^{-1}(b,\psi(b))=1$ implies $F(\varphi(\psi(b)),b_1)=F_{\varphi(\psi(b))}(b_1)=F_{b}(b_1)=F(b,b_1)$, we obtain that
\[
\begin{aligned}
&\bigvee_{b_1,b_2\in B}F(\varphi(\psi(b)),b_1)\otimes W_i(b_1,b_2)\otimes F(b_2,\varphi(a)) = \bigvee_{b_1,b_2\in B}F(b,b_1)\otimes W_i(b_1,b_2)\otimes F(b_2,\varphi(a)) \\
&\qquad\qquad= (F\circ W_i\circ F)(b,\varphi(a))\le (W_i\circ F)(b,\varphi(a))=\bigvee_{b_3\in B}W_i(b,b_3)\otimes F(b_3,\varphi (a)) \\
&\qquad\qquad= \bigvee_{b_3\in B}W_i(b,b_3)\otimes R^{-1}(b_3,a)= (W_i\circ R^{-1})(b,a).
\end{aligned}
\]
Hence, $R^{-1}\circ V_i\le W_i\circ R^{-1}$, and in a similar way we prove that $R\circ W_i\le V_i\circ R$. This completes the proof of the theorem.
\end{proof}

A natural question which arises here is the relationship between the greatest solution to a heterogeneous weakly linear system and the greatest solutions to the corresponding homogeneous weakly linear systems. The following theorem gives an answer to this question.

\begin{theorem}\label{th:unif.gr}
Let $Z\in {\cal R}(A,B)$ be a uniform fuzzy relation and let system $WL^{\text{2-3}}(A,B,I,V_i,W_i,Z)$ have a uniform solution.

Then the greatest solution $R$ to $WL^{\text{2-3}}(A,B,I,V_i,W_i,Z)$ is a uniform fuzzy relation such that
$E_A^R$ is the greatest solution to $WL^{\text{1-4}}(A,I,V_i,Z\circ Z^{-1})$ and $E_B^R$ is the greatest solution to $WL^{\text{1-4}}(B,I,W_i,Z^{-1}\circ Z)$.
\end{theorem}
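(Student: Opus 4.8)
The plan is to prove the statement in three stages: first show that the greatest solution $R$ (which exists by Theorem \ref{th:great}) is a uniform fuzzy relation, and then identify its kernel and co-kernel as the greatest solutions of the two homogeneous systems. The reason for doing uniformity first is that, once $R$ is known to be uniform, Theorem \ref{th:ufr} supplies the identities $R\circ R^{-1}=E_A^R$ and $R^{-1}\circ R=E_B^R$, which convert awkward statements about kernels (where $E_A^{(\cdot)}$ is \emph{not} monotone) into statements about compositions, and composition is monotone by (\ref{eq:comp.mon}).

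For uniformity of $R$ I would proceed as follows. Since $Z$ is uniform it is in particular a partial fuzzy function, so Theorem \ref{th:great} guarantees that the greatest solution $R$ is itself a partial fuzzy function. It remains to verify that $R$ is a surjective $\cal L$-function, and here the hypothesis that the system admits a uniform solution $R_0$ is essential. As $R$ is the greatest solution, $R_0\le R$; and since $R_0$ is a surjective $\cal L$-function, for every $a\in A$ there is $b$ with $R_0(a,b)=1$ and for every $b\in B$ there is $a$ with $R_0(a,b)=1$, so the same $1$'s persist in the larger relation $R$. Thus $R$ is simultaneously a partial fuzzy function and a surjective $\cal L$-function, i.e. uniform. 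Note that the upper bound $Z$ cannot be used here, because $R\le Z$ only bounds $R$ from above; surjectivity must come from the assumed solution $R_0$ below it.

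For the kernel, let $E^{\ast}$ be the greatest solution to $WL^{1\text{-}4}(A,I,V_i,Z\circ Z^{-1})$; since $Z\circ Z^{-1}$ is a fuzzy equivalence, Theorem~4.5 of \cite{ICB.10} makes $E^{\ast}$ a (symmetric) fuzzy equivalence. By Proposition \ref{prop:RR-1}(a) (equivalently Theorem \ref{th:unif}(i)) the kernel $E_A^R$ solves this homogeneous system, so $E_A^R\le E^{\ast}$. For the reverse inequality I would use the composition behaviour of $WL^{2\text{-}3}$: for a symmetric relation the systems $WL^{1\text{-}4}(A,I,V_i,W)$ and $WL^{2\text{-}3}(A,A,I,V_i,V_i,W)$ coincide, so $E^{\ast}$ is a solution of $WL^{2\text{-}3}(A,A,I,V_i,V_i,Z\circ Z^{-1})$. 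Proposition \ref{prop:comp} then shows $E^{\ast}\circ R$ solves $WL^{2\text{-}3}(A,B,I,V_i,W_i,(Z\circ Z^{-1})\circ Z)$, and the crucial point is that uniformity of $Z$ gives $(Z\circ Z^{-1})\circ Z=Z$ by Theorem \ref{th:ufr}, so $E^{\ast}\circ R$ in fact solves the original system. Maximality of $R$ yields $E^{\ast}\circ R\le R$; composing on the right by $R^{-1}$ and using $R\circ R^{-1}=E_A^R$ gives $E^{\ast}\circ E_A^R\le E_A^R$, and reflexivity of $E_A^R$ gives $E^{\ast}\le E^{\ast}\circ E_A^R$. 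Hence $E^{\ast}\le E_A^R$, so $E_A^R=E^{\ast}$.

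The co-kernel is handled dually. Writing $F^{\ast}$ for the greatest solution to $WL^{1\text{-}4}(B,I,W_i,Z^{-1}\circ Z)$, Proposition \ref{prop:RR-1}(b) gives $E_B^R\le F^{\ast}$, while $F^{\ast}$ (again a symmetric fuzzy equivalence) solves $WL^{2\text{-}3}(B,B,I,W_i,W_i,Z^{-1}\circ Z)$; composing $R\circ F^{\ast}$ via Proposition \ref{prop:comp} and using $Z\circ Z^{-1}\circ Z=Z$ shows $R\circ F^{\ast}$ solves the original system, whence $R\circ F^{\ast}\le R$. Multiplying on the left by $R^{-1}$, using $R^{-1}\circ R=E_B^R$ and reflexivity yields $F^{\ast}\le E_B^R$, so $E_B^R=F^{\ast}$. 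I expect the main obstacle to be the uniformity step: one must carefully split the two defining properties, drawing the partial-fuzzy-function property from $Z$ through Theorem \ref{th:great} and the surjective-$\cal L$-function property from the \emph{assumed} uniform solution $R_0$ sitting below $R$. Once uniformity is secured, the single identity $Z\circ Z^{-1}\circ Z=Z$ is exactly what lets the composition trick collapse the bound back to $Z$ and close both maximality arguments.
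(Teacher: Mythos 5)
Your proof is correct, and the maximality part takes a genuinely different route from the paper's. The uniformity step is identical to the paper's: partial-fuzzy-functionhood descends from $Z$ via Theorem \ref{th:great}, and surjective-$\cal L$-functionhood ascends from the assumed uniform solution $R_0\le R$. For the identification of the kernel and co-kernel, however, the paper works much harder: it invokes Theorem \ref{th:unif} to get that $E=E_A^R$ and $F=E_B^R$ solve the homogeneous systems and that $\widetilde R$ is an isomorphism of the quotient systems, then takes the greatest homogeneous solutions $G$ and $H$, forms the uniform relations $G_E$ and $H_F$ of Theorem \ref{th:F.E}, builds the two-sided composite $M=G_E\circ\widetilde R\circ H_F^{-1}$, verifies through an explicit pointwise computation that the resulting bound $P_E\circ\widetilde R\circ Q_F^{-1}$ collapses to $Z$, and finally extracts $G\le M\circ M^{-1}\le R\circ R^{-1}=E$ after checking that $M\circ M^{-1}$ is reflexive and $G\circ M=M$. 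You instead compose one-sidedly: since any solution of $WL^{\text{1-4}}(A,I,V_i,Z\circ Z^{-1})$ satisfies, a fortiori, all the defining inequalities of $WL^{\text{2-3}}(A,A,I,V_i,V_i,Z\circ Z^{-1})$, Proposition \ref{prop:comp} plus the single identity $Z\circ Z^{-1}\circ Z=Z$ gives $E^{\ast}\circ R\le R$, and then $E^{\ast}\le E^{\ast}\circ E_A^R\le E_A^R$ by reflexivity of $E_A^R$ and monotonicity of composition; the co-kernel is handled symmetrically. This avoids the quotient machinery of Section \ref{sec:quot} entirely and is noticeably shorter. What the paper's construction buys in exchange is the explicit canonical solution $M$ (noted after the proof to equal $G\circ\psi\circ H$ for $\psi\in CR(R)$), which exhibits how the greatest heterogeneous solution is assembled from the greatest homogeneous ones and the induced isomorphism; your argument establishes the stated theorem without producing that object.
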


\begin{proof}
According to Theorem \ref{th:great}, $R$ is a partial fuzzy function, and since the system $WL^{\text{2-3}}(A,B,I,V_i,W_i,Z)$ has a uniform solution, this uniform solution is contained in $R$, so $R$ is also a uniform fuzzy relation.

For the sake of simplicity set $E_A^R=E$, $E_B^R=F$ and $\widetilde R=\phi $.~By Theorem \ref{th:unif} it follows that $E$ is a solution
to $WL^{\text{1-4}}(A,I,V_i,Z\circ Z^{-1})$, $F$ is a solution to $WL^{\text{1-4}}(B,I,W_i,Z^{-1}\circ Z)$ and $\phi$ is an isomorphism of quotient fuzzy relational systems
${\cal A}/E$ and ${\cal B}/F$, where ${\cal A}=(A,I,V_i)$ and ${\cal B}=(A,I,W_i)$.

Furthermore, assume that $G$ is the greatest solution to $WL^{\text{1-4}}(A,I,V_i,Z\circ Z^{-1})$ and $H$ is the greatest solution to $WL^{\text{1-4}}(B,I,W_i,Z^{-1}\circ Z)$.~Let $S=G_E$ and $T=H_F$, where $G_E\in {\cal R}(A,A/E)$ and $H_F\in {\cal R}(B,B/F)$~are~fuzzy relations defined as in (\ref{eq:F:E.nat}).~According to Theorem \ref{th:F.E}, we have that $S$ and $T$ are uniform fuzzy relations such that $E_A^S=G$, $E_{A/E}^S=G/E$, $E_B^T=H$ and $E_{B/F}^T=H/F$.~The same theorem asserts that $S$ is a solution to $WL^{\text{2-3}}(A,A/E,I,V_i,V_i^{A/E},P_E)$ and $T$ is a solution to $WL^{\text{2-3}}(B,B/F,I,W_i,W_i^{B/F},Q_F)$, where $P=Z\circ Z^{-1}$ and $Q=Z^{-1}\circ Z$.~Moreover, if we consider the isomorphism $\phi $ as a fuzzy relation between $A/E$ and $B/F$, then it is easy to verify that $\phi $ is a solution to $WL^{\text{2-3}}(A/E,B/F,I,V_i^{A/E},W_i^{B/F},\phi)$.

Now, let a fuzzy relation $M\in {\cal R}(A,B)$ be defined as $M=S\circ \phi \circ T^{-1}$.~According to Propositions \ref{prop:dual.het} (d) and \ref{prop:comp}, $M$ is a solution to system $WL^{\text{2-3}}(A,B,I,V_i,W_i,P_E\circ \phi\circ Q_F^{-1})$.~We will prove that $P_E\circ \phi\circ Q_F^{-1}=Z$.

Consider arbitrary $a\in A$ and $b\in B$.~First, we have that
\[
(P_E\circ \phi\circ Q_F^{-1})(a,b)=\bigvee_{a_1\in A}P_E(a,E_{a_1})\otimes (\phi\circ Q_F^{-1})(E_{a_1},b) .
\]
Moreover, for arbitrary $a_1\in A$ and $\psi \in CR(R)$ we obtain that
\[
(\phi\circ Q_F^{-1})(E_{a_1},b)=\bigvee_{b_1\in B}\phi(E_{a_1},F_{b_1})\otimes Q_F^{-1}(F_{b_1},b) = Q_F^{-1}(F_{\psi(a_1)},b)= Q_F(b,F_{\psi(a_1)})=Q(b,\psi(a_1)),
\]
and since $Z$ is a uniform fuzzy relation and $Q=Z^{-1}\circ Z=E_B^Z$, then  $Q(b,\psi(a_1))=E_B^Z(\psi(a_1),b)=Z(a_1,b)$, according to Theorem \ref{th:ufr}.~Therefore,
\[
(P_E\circ \phi\circ Q_F^{-1})(a,b)=\bigvee_{a_1\in A}P_E(a,E_{a_1})\otimes Z(a_1,b) =
\bigvee_{a_1\in A}(Z\circ Z^{-1})(a,a_1)\otimes Z(a_1,b)=(Z\circ Z^{-1}\circ Z)(a,b)=Z(a,b),
\]
and we have proved that $P_E\circ \phi\circ Q_F^{-1}=Z$.~Hence, $M$ is a solution to
$WL^{\text{2-3}}(A,B,I,V_i,W_i,Z)$, and since $R$ is the greatest solution to this system, we conclude that $M\le R$.

Further, consider arbitrary $a\in A$ and $\psi \in CR(R)$.~Then
\[
\begin{aligned}
M(a,\psi(a))&= (S\circ \phi \circ T^{-1})(a,\psi(a))=\bigvee_{a_1\in A}S(a,E_{a_1})\otimes (\phi\circ T^{-1})(E_{a_1},\psi(a)) \\
&= \bigvee_{a_1\in A}S(a,E_{a_1})\otimes \biggl( \bigvee_{b\in B}(\phi (E_{a_1},F_b)\otimes T^{-1}(F_b,\psi(a))\biggr) = \bigvee_{a_1\in A}S(a,E_{a_1})\otimes T^{-1}(F_{\psi(a_1)},\psi(a)) \\
&= \bigvee_{a_1\in A}G(a,a_1)\otimes H(\psi(a),\psi(a_1))\ge G(a,a)\otimes H(\psi(a),\psi(a))=1,
\end{aligned}
\]
and consequently,
\[
(M\circ M^{-1})(a,a)=\bigvee_{b\in B}M(a,b)\otimes M^{-1}(b,a)\ge M(a,\psi(a))\otimes M^{-1}(\psi(a),a)=1.
\]
Hence, $M\circ M^{-1}$ is reflexive.~As $G=E_A^S=S\circ S^{-1}$, we have that $G\circ M=S\circ S^{-1}\circ S\circ \phi \circ T^{-1}=
S\circ \phi \circ T^{-1}=M$, and by reflexivity of $M\circ M^{-1}$ we obtain that $G\le G\circ M\circ M^{-1}=M\circ M^{-1}\le R\circ R^{-1}=E$.~Since both $G$ and $E$ are solutions to system
$WL^{\text{1-4}}(A,I,V_i,Z\circ Z^{-1})$, and $G$ is the greatest one, we conclude that $E=G$, i.e., $E=E_A^R$ is the greatest solution to $WL^{\text{1-4}}(A,I,V_i,Z\circ Z^{-1})$.

In the same way we show that $F=H$, i.e., $F=E_B^R$ is the greatest solution to $WL^{\text{1-4}}(B,I,W_i,Z^{-1}\circ Z)$.~This completes the proof of the theorem.
\end{proof}

Let us note that the fuzzy relation $M$ defined in the proof of the previous theorem can be also represented as $M=G\circ \psi \circ H$, for an arbitrary $\psi\in CR(R)$.

A result similar to Theorem \ref{th:unif} can  be also obtained for system (\ref{eq:w25}).

\begin{theorem}\label{th:unif.25}
Let $R\in {\cal R}(A,B)$ be a uniform fuzzy relation and $Z\in {\cal R}(A,B)$ is a fuzzy relation
such that $R\le Z$.

Then $R$ is a solution to system $WL^{\text{2-5}}(A,B,I,V_i,W_i,Z)$ if and only if the following is true:
\begin{itemize}\parskip0pt
\item[{\rm (i)}] $E_A^R$ is a solution to system $WL^{\text{1-4}}(A,I,V_i,Z\circ Z^{-1})$;
\item[{\rm (ii)}] $E_B^R$ is a solution to system $WL^{\text{1-5}}(B,I,W_i,Z^{-1}\circ Z)$;
\item[{\rm (iii)}] $\widetilde R$ is an isomorphism of quotient fuzzy relational systems ${\cal A}/E_A^R $ and ${\cal B}/E_B^R $;
\end{itemize}
where ${\cal A}=(A,I,V_i)$ and ${\cal B}=(B,I,W_i)$.
\end{theorem}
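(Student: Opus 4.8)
The plan is to follow the proof of Theorem~\ref{th:unif} as a template, exploiting that the equation system $WL^{\text{2-5}}$ decomposes into $V_i\circ R\le R\circ W_i$ and $R\circ W_i\le V_i\circ R$, the latter being one of the two defining inequalities of $WL^{\text{2-3}}$. Throughout I would set $E=E_A^R$, $F=E_B^R$, $\phi=\widetilde R$, and use uniformity of $R$ in the forms $E=R\circ R^{-1}$, $F=R^{-1}\circ R$, $R\circ R^{-1}\circ R=R$, $E\circ R=R=R\circ F$, $E\circ E=E$, $F\circ F=F$, together with the pointwise representations $R(a,b')=F(\varphi(a),b')$ and $R^{-1}(b,a')=E(\psi(b),a')$ for $\varphi\in CR(R)$ and $\psi\in CR(R^{-1})$, which are available from Theorem~\ref{th:ufr}.

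For the direct implication, assume $V_i\circ R=R\circ W_i$ for every $i\in I$. Then $V_i\circ E=V_i\circ R\circ R^{-1}=R\circ W_i\circ R^{-1}$, and since $R^{-1}\circ V_i\circ R=R^{-1}\circ R\circ W_i=F\circ W_i$ we also get $E\circ V_i\circ E=R\circ F\circ W_i\circ R^{-1}=R\circ W_i\circ R^{-1}$; hence $V_i\circ E=E\circ V_i\circ E\ge E\circ V_i$ by reflexivity of $E$, giving (i). Dually $F\circ W_i=R^{-1}\circ V_i\circ R=F\circ W_i\circ F\ge W_i\circ F$, giving (ii). The asymmetry is essential here: because the equation is one-sided, $F$ satisfies only the $WL^{\text{1-5}}$ inequality $W_i\circ F\le F\circ W_i$, not its $WL^{\text{1-4}}$ counterpart, which is precisely why condition (ii) differs from the one in Theorem~\ref{th:unif}. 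The bounds $E\le Z\circ Z^{-1}$ and $F\le Z^{-1}\circ Z$ follow from $R\le Z$. For (iii), the equality $E\circ V_i\circ E=R\circ W_i\circ R^{-1}$ (obtained here directly, rather than by the two-sided squeeze used in Theorem~\ref{th:unif}) together with $R(a,b')=F(\varphi(a),b')$ yields $V_i^{A/E}(E_{a_1},E_{a_2})=W_i^{B/F}(\phi(E_{a_1}),\phi(E_{a_2}))$, so the bijection $\phi$ of Theorem~\ref{th:Rtilde} preserves the relations and is an isomorphism.

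For the converse, assume (i)--(iii); since $R\le Z$ is given, only $V_i\circ R=R\circ W_i$ has to be checked. From (iii), Theorem~\ref{th:Rtilde} and $(\widetilde R)^{-1}=\widetilde{R^{-1}}$ I would extract the transfer identities $(E\circ V_i\circ E)(a_1,a_2)=(F\circ W_i\circ F)(\varphi(a_1),\varphi(a_2))$ and $(F\circ W_i\circ F)(b_1,b_2)=(E\circ V_i\circ E)(\psi(b_1),\psi(b_2))$. To prove $V_i\circ R\le R\circ W_i$, I rewrite $(V_i\circ R)(a,b)=(V_i\circ E)(a,\psi(b))\le(E\circ V_i\circ E)(a,\psi(b))$, transfer this by the first identity to $(F\circ W_i\circ F)(\varphi(a),\varphi(\psi(b)))$, simplify the second argument using $F_{\varphi(\psi(b))}=F_b$, collapse $F\circ W_i\circ F\le F\circ W_i$ by (ii), and reassemble $(R\circ W_i)(a,b)$ via $F(\varphi(a),b_1)=R(a,b_1)$. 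The reverse inequality $R\circ W_i\le V_i\circ R$ is handled by the mirror chain: lift $(R\circ W_i)(a,b)=(F\circ W_i)(\varphi(a),b)$ to $(F\circ W_i\circ F)(\varphi(a),b)$, transfer by the second identity and simplify using $E_{\psi(\varphi(a))}=E_a$ to reach $(E\circ V_i\circ E)(a,\psi(b))$, collapse $E\circ V_i\circ E\le V_i\circ E$ by (i), and reassemble $(V_i\circ R)(a,b)$ via $E(a',\psi(b))=R(a',b)$.

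The main obstacle is this backward direction, and the crucial point is that a verbatim mirror of Theorem~\ref{th:unif} fails. Because condition (ii) here is the $WL^{\text{1-5}}$ inequality (of the opposite orientation to the $WL^{\text{1-4}}$ condition appearing in Theorem~\ref{th:unif}), the two inequalities of the equation cannot be reduced by a single common collapse; they must be paired correctly with the two homogeneous conditions---$V_i\circ R\le R\circ W_i$ with (ii) and $R\circ W_i\le V_i\circ R$ with (i)---so that in each chain the relevant triple product ($F\circ W_i\circ F$ or $E\circ V_i\circ E$) collapses to exactly the one-sided product that the uniformity identities need to rebuild $R\circ W_i$, respectively $V_i\circ R$. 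Once the pairing is fixed, the remaining steps are routine manipulations with crisp descriptions and the idempotency of $E$ and $F$.
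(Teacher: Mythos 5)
Your proposal is correct and takes essentially the same route as the paper's proof: uniformity identities, the transfer identities obtained from the isomorphism via crisp descriptions, and the collapses $V_i\circ E=E\circ V_i\circ E$ (from (i)) and $F\circ W_i\circ F=F\circ W_i$ (from (ii)). The only difference is presentational: the paper proves the converse as a single chain of equalities $V_i\circ R=\cdots=R\circ W_i$ (which works because (i), (ii) and reflexivity make both collapses genuine equalities, so no ``pairing'' issue actually arises), whereas you split it into the two inequalities and verify each separately.
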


\begin{proof}
For the sake of simplicity set $E_A^R=E$, $E_B^R=F$ and $\widetilde R=\phi $.~By uniformity of $R$ we have that $E=R\circ R^{-1}$ and $F=R^{-1}\circ R$.

Let $R$ be a solution to $WL^{\text{2-5}}(A,B,I,V_i,W_i,Z)$.~Due to reflexivity of $E$ and $F$, for each $i\in I$ we have
\[
\begin{aligned}
E\circ V_i&\le E\circ V_i\circ E = R\circ R^{-1}\circ V_i\circ R\circ R^{-1}=
R\circ R^{-1}\circ R\circ W_i\circ R^{-1} \\
&=R\circ W_i\circ R^{-1}=V_i\circ R\circ R^{-1}=V_i\circ E, \\
W_i\circ F&\le F\circ W_i\circ F =R^{-1}\circ R\circ W_i\circ R^{-1}\circ R
=R^{-1}\circ V_i\circ R\circ R^{-1}\circ R \\
&= R^{-1}\circ V_i\circ R = R^{-1}\circ R\circ W_i = F\circ W_i ,
\end{aligned}
\]
and by symmetry of $E$ and $F$ we obtain that $E$ is a solution to $WL^{\text{1-4}}(A,I,V_i,Z\circ Z^{-1})$ and $F$ is a solution to  $WL^{\text{1-5}}(B,I,W_i,Z^{-1}\circ Z)$.~As we have shown above, $E\circ V_i\circ E = R\circ W_i\circ R^{-1}$, for every $i\in I$, and as in the proof of Theorem \ref{th:unif} we prove that $\phi $ is an isomorphism of fuzzy relational systems ${\cal A}/E$ and ${\cal B}/F$.

Conversely, let (i), (ii) and (iii) hold.~As in the proof of Theorem \ref{th:unif} we show that
\[
(E\circ V_i\circ E)(a_1,a_2)=(F\circ W_i\circ F)(\varphi(a_1),\varphi(a_2)), \qquad
(F\circ W_i\circ F)(b_1,b_2)=(E\circ V_i\circ E)(\psi(b_1),\psi(b_2)),
\]
for all $a_1,a_2\in A$, $b_1,b_2\in B$, $i\in I$, $\varphi \in CR(R)$ and $\psi \in CR(R^{-1})$.~Thus, for all $a\in A$, $b\in B$ and $i\in I$ we have that
\[
\begin{aligned}
(V_i\circ R)(a,b)&=(V_i\circ E\circ R)(a,b)=(E\circ V_i\circ E\circ R)(a,b)=(E\circ V_i\circ R)(a,b) \\
&=\bigvee_{a_1\in A}(E\circ V_i)(a,a_1)\otimes R(a_1,b)= \bigvee_{a_1\in A}(E\circ V_i)(a,a_1)\otimes E(a_1,\psi(b))=(E\circ V_i\circ E)(a,\psi(b)) \\
&= (F\circ W_i\circ F)(\varphi(a),\varphi(\psi(b)))=
\bigvee_{b_1\in B}(F\circ W_i)(\varphi(a),b_1)\otimes F(b_1,\varphi(\psi(b)))=(F\circ W_i\circ F)(\varphi(a),b)\\
&=(F\circ W_i)(\varphi(a),b)=\bigvee_{b_2\in B}F(\varphi(a),b_2)\otimes W_i(b_2,b) =
\bigvee_{b_2\in B}R(a,b_2)\otimes W_i(b_2,b) = (R\circ W_i)(a,b).
\end{aligned}
\]
Therefore, $V_i\circ R=R\circ W_i$, for each $i\in I$, and we have proved that $R$ is a solution to $WL^{\text{2-5}}(A,B,I,V_i,W_i,Z)$.
\end{proof}

It is an open question whether the analogue of Theorem \ref{th:unif.gr} is valid for the system (\ref{eq:w25}).~The methodology used in Theorem \ref{th:unif.gr} does not give results when it works with this system.

\section{Some applications}\label{sec:appl}

Fuzzy relational systems have many natural interpretations and important applications.~We will mention two of them, and we will also point to applications of weakly linear systems related to these interpretations.

First, a fuzzy relational system ${\cal A}=(A,I,V_i)$ can be interpreted as the system of fuzzy transition relations of some {\it fuzzy transition system\/} \cite{CCK.11} or a {\it fuzzy automaton\/} (when fuzzy sets of initial and terminal states are also fixed) \cite{CIDB.11,CIJD.11,CSIP.07,CSIP.10,SCI.11} with $A$ as the set of states and $I$ as the input alphabet (set of labels).~In~this~interpretation,~the concept of a quotient fuzzy relational system corresponds to the concept of a quotient (factor) fuzzy~auto\-maton or fuzzy transition system which has been introduced in \cite{CSIP.07,CSIP.10}.~Quotient fuzzy automata have been used in \cite{CSIP.07,CSIP.10,SCI.11} to~reduce~the number of states of a fuzzy automaton, and from this aspect, there are interesting those quotient fuzzy automata which are language-equivalent to the original fuzzy automaton. In particular, the language-equivalence is achieved when the quotient fuzzy automaton is made by means of fuzzy equivalences which are solutions to homogeneous weakly linear systems (\ref{eq:w14}) and (\ref{eq:w15}) (or (\ref{eq:w11}) and (\ref{eq:w12})), and the best such reductions are attained by means of the greatest solutions to these systems.~Let us note that in these cases the fuzzy relation $W$ is taken to be the greatest fuzzy equivalence such that the fuzzy set of terminal states or the fuzzy set of initial states is extensional with respect to it.

On the other hand, heterogeneous weakly linear systems (\ref{eq:w21})--(\ref{eq:w26}) have been studied in the context of fuzzy automata in \cite{CIDB.11,CIJD.11} (see also \cite{CIBJ.11}), with the fuzzy relation $Z$ given in terms of fuzzy sets of initial and terminal states, and certain additional constraints given also in terms of fuzzy sets of initial and terminal states. Solutions to (\ref{eq:w21}) and (\ref{eq:w22}) are called {\it simulations\/} (respectively {\it forward\/} and {\it backward simulations\/}), and solutions to (\ref{eq:w23})--(\ref{eq:w26}) are called {\it bisimulations\/} (respectively {\it forward\/}, {\it backward\/}, {\it backward-forward\/} and {\it forward-backward bisimulations\/}).~All types of bisimulations realize the language-equivalence between fuzzy automata, and forward and backward bisimulations which are uniform fuzzy relations are used to model structural equivalence between fuzzy automata.~More information on fuzzy automata with membership~values in complete residuated lattices, state reduction, simulation, bisimulation and equivalence can be found in \cite{CIDB.11,CIJD.11,CSIP.07,CSIP.10,SCI.11}.

In another interpretation of the fuzzy relational system ${\cal A}=(A,I,V_i)$, $A$ is taken to be a set of {\it individuals\/} and $\{V_i\}_{i\in I}$ is a system of fuzzy relations between these individuals.~Such a fuzzy relational system~is~called~a {\it fuzzy social network\/}, or just a {\it fuzzy network\/}, since concepts of social network analysis share many common properties with other types of networks and its methods are applicable to the analysis of networks in~general.~In large and complex networks it is impossible to understand~the relationship between each pair of indi\-viduals, but to a certain extent, it may be possible to understand~the system, by classifying individuals and describing relationships on the class level.~In networks, for instance, individuals in the same class can be considered to occupy the same {\it position\/},
or play the same {\it role\/} in the network.~The main aim of the position\-al analysis of networks is~to find similarities between individuals which have to reflect their position in a network.~These similarities have been formalized first by Lorrain and White \cite{LW.71} by the concept of a~{\it structural equiv\-alence\/}.~Informally speaking, two individuals are considered to be structurally equivalent if they have identical neighborhoods.~However, in many situations this concept has shown oneself to be too strong.~Weakening it sufficiently to make it more appropriate for modeling social positions, White and Reitz~\cite{WR.83} have introduced the concept of a {\it regular equivalence\/}, where two~indi\-viduals are considered to be regularly equivalent~if they are equally related to equivalent~others.~In the context of fuzzy relations, regular equivalences correspond to fuzzy equivalences which are solutions to the homogeneous weakly linear system (\ref{eq:w16}) (or (\ref{eq:w13})).~Making the quotient fuzzy relational system with respect to a regular fuzzy equivalence we reduce the number of nodes in the original network, while preserving essential relations between nodes.~Fuzzy relations which are solutions to systems (\ref{eq:w21})--(\ref{eq:w26}) also have natural interpretations when dealing with bipartite networks, which will be the topic of our further research.~More information on various aspects of the network analysis and its applications can be found in \cite{BE.05,HR.05,M.07,dNMB.05,DBF.05}.

\section{Concluding remarks}

New types of fuzzy relation inequalities and equations, called {\it weakly~linear\/}, have been recently introduced and studied in \cite{ICB.10}.~They are composed of fuzzy relations on a single set and are called {\it homogeneous\/}.~In this paper we have introduced and studied {\it heterogeneous weakly linear systems\/}, which are composed of fuzzy relations on two possible different sets, and an unknown is a fuzzy relation between these two sets.~We have proved that every heterogeneous weakly linear system has the greatest solution, and we define isotone and image-localized functions $\phi^{(i)}$ ($i=1,\ldots ,6$) on the lattice of fuzzy relations between $A$ and $B$ such that each of the six heterogeneous weakly linear systems can be represented in an equivalent form $U\le \phi^{(i)}(U)$, $U\le Z$.~Such representation enables us to reduce the problem of computing the greatest solution to a heterogeneous weakly linear system to the problem of computing the greatest post-fixed point, contained in the fuzzy relation $Z$, of the function $\phi^{(i)}$. For this purpose we use the iterative method developed in \cite{ICB.10}, adapted to the heterogeneous case.~Besides, we introduce the concept of the quotient fuzzy relational system with respect to a fuzzy equivalence, we proved several theorems analogous to the well-known homomorphism, isomorphisms and correspondence theorems from universal algebra, and using this concept we establish natural relationships between solutions to heterogeneous and homogeneous weakly linear systems.

Weakly linear systems originate from research in the theory of fuzzy automata.~Solutions to homogeneous weakly linear systems have been used in \cite{CSIP.07,CSIP.10,SCI.11}
for reduction of the number of states, and solutions to the heterogeneous systems have been used in \cite{CIDB.11,CIJD.11} in the study of simulations and bisimulations between fuzzy automata. In our further work both homogeneous and heterogeneous weakly linear systems will be used in the study of fuzzy social networks.~Besides, methodology developed in the study of weakly linear systems will be generalized and applied to a much wider class of fuzzy relation inequalities and equations, as well as to matrix inequalities and equations over max-algebras.

\end{document}